\newtheorem{defini}{Definition}[section]
\newtheorem{thm}[defini]{Theorem}
\newtheorem{prop}[defini]{Proposition}
\newtheorem{lem}[defini]{Lemma}
\newtheorem{exam}[defini]{Example}
\newtheorem{cor}[defini]{Corollary}
\newtheorem{nota}[defini]{Notation}
\numberwithin{equation}{section}
\newcommand{\N}{\mathbb{N}}
\newcommand{\Z}{\mathbb{Z}}
\newcommand{\Q}{\mathbb{Q}}
\newcommand{\R}{\mathbb{R}}
\newcommand{\C}{\mathbb{C}}
\newcommand{\re}{ {\rm{Re}} }
\newcommand{\mch}{ {\mathcal{H}} }
\newcommand{\mcz}{ {\mathcal{Z}} }
\newcommand{\rev}{ {\rm{Rev}} }
\newcommand{\tbo}{\textbf{$\omega$}}
\begin{document}
\title{ MULTIPLE ZETA FUNCTIONS AT REGULAR INTEGER POINTS}
\author{Takeshi Shinohara}
\address{Graduate School of Mathematics, Nagoya University, Furo-cho, Chikusa-ku, Nagoya,
464-8602, Japan}
\email{m21022w@math.nagoya-u.ac.jp}
\begin{abstract}
  We show the recurrence relations of the Euler-Zagier multiple zeta-function
  which describe the $r$-fold function with one variable specialized to a non-positive integer as 
  a rational linear combination of $(r-1)$-fold functions, 
  which extends the previous results of Akiyama-Egami-Tanigawa and Matsumoto. 
  As an application, we obtain an explicit method to calculate the special values of 
  the multiple zeta-function at any integer points 
  (the arguments could be neither all-positive nor all-non-positive) 
  as a rational linear summation of the multiple zeta values. 
\end{abstract}

\date{\today}
\maketitle

\tableofcontents

\setcounter{section}{-1}

\section{Introduction}
 
The Euler-Zagier {\it multiple zeta-function} (MZF for short) is the complex analytic function 
defined by the following series;
\[
  \zeta_r(s_1,s_2,\dots,s_r)
  =
  \zeta(s_1,s_2,\dots,s_r) 
   = \sum_{0<n_1<n_2<\cdots<n_r}
      \frac{1}
           {n_1^{s_1}n_2^{s_2}\cdots n_r^{s_r}},
\]
where $r$ is a positive integer and $s_1$, $s_2$, $\dots$, $s_r$ are complex variables. 
When $r=1$, it is the so-called Riemann zeta function.
It converges absolutely in the region
\[
  \re(s_r)>1,\quad \re(s_{r-1}+s_r)>2,\ \dots,\quad \re(s_1+ \cdots +s_r)>r.
\]
The special value of the MZF at positive integer points is called the {\it multiple zeta value} 
(MZV for short). The MZVs appear in calculations of a certain invariant in knot theory, 
in calculations of a certain integration in mathematical physics and in various areas of mathematics. 
See \cite{z2} for more details.
In the early 2000s, Zhao (\cite{z1}) and Akiyama-Egami-Tanigawa (\cite{AET}) 
independently showed that the MZF can be meromorphically continued to $\C^r$.
In particular, in \cite{AET}, the set of singularities of the MZF is determined as follows;
\begin{equation}\label{eqn:pole condition of mzf in introduction}
 \begin{split}
   s_r&=1, \\
   s_{r-1} + s_r &= 2, 1, 0, -2, -4,-6,\dots, \\
   \sum_{i=1}^{k}s_{r-i+1} &\in \Z_{\le k}, \quad (k=3,4,\dots,r).
 \end{split}
\end{equation}
This shows that {\it all-non-positive integer points}
\footnote{
  In this paper, {\it all-non-positive integer point} means an index $(k_1,\dots,k_r)\in\Z^r$ with 
  some $r>0$ such that $k_i\le0$ for all $i=1,\dots,r$.
         } 
mostly lie on the above singularities. 
In addition, it is known that the special values at all-non-positive integer points of the MZF 
are indeterminate and depend on a direction of the limit to choose. 
Akiyama-Egami-Tanikawa defined the special values of the MZF at all-non-positive integer points 
with a certain limit by using the following {\it recurrence relation} of the MZF 
and presented certain linear relations among those limit values.
\begin{thm}[{cf. \cite[\S2]{AET}}]
\label{original recurrence relation of MZF in introduction}
  Let $r\in\N_{\ge2}$ and $n_r\in\N_0\,(=\{0,1,2,\dots \})$.
  We have
  \begin{equation}
   \begin{split}
    \zeta_r(s_1,\dots,s_{r-1},-n_r) 
     &= -\frac{1}{n_r+1}
          \zeta_{r-1}(s_1,\dots,s_{r-2},s_{r-1}-n_r-1) \\
     &\hspace{4.5mm} 
      + \sum_{k=0}^{n_r}
         \binom{n_r}{k}
          \zeta_{r-1}(s_1,\dots,s_{r-2},s_{r-1}-n_r+k)
           \zeta(-k).
   \end{split}
  \end{equation}
\end{thm}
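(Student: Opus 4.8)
The plan is to continue the identity analytically in the single variable $s_r$ down to $s_r=-n_r$, and then to recognise what remains as a consequence of Faulhaber's power-sum formula together with the values $\zeta(-k)=-B_{k+1}/(k+1)$; here $B_k$ denotes the Bernoulli number normalised by $B_1=\tfrac12$, so that this last identity holds for every $k\ge0$. Fix $s_1,\dots,s_{r-1}$ with $\re(s_1),\dots,\re(s_{r-1})$ large and work first in the half-plane $\re(s_r)>1$, where the series defining $\zeta_r$ converges absolutely. Summing over $n_r$ first and using $\sum_{n_r>N}n_r^{-s_r}=\zeta(s_r)-\sum_{m=1}^{N}m^{-s_r}$ gives
\begin{equation}\label{eqn:split-prop}
  \zeta_r(s_1,\dots,s_r)
   =\zeta(s_r)\,\zeta_{r-1}(s_1,\dots,s_{r-1})
    -\sum_{0<n_1<\cdots<n_{r-1}}\frac{1}{n_1^{s_1}\cdots n_{r-1}^{s_{r-1}}}\sum_{m=1}^{n_{r-1}}\frac{1}{m^{s_r}}.
\end{equation}
As a function of $s_r$ alone the left side of \eqref{eqn:split-prop} is meromorphic on $\C$, by the meromorphic continuation of $\zeta_r$ to $\C^r$ (every singular hyperplane of $\zeta_r$ involves $s_r$, so the line it cuts out meets each of them in at most one point). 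The right side is holomorphic in $s_r$ near $s_r=-n_r$: each finite sum $\sum_{m=1}^{N}m^{-s_r}$ is entire, and on the disc $|s_r+n_r|\le1$ one has $\bigl|\sum_{m=1}^{N}m^{-s_r}\bigr|\le N^{\,n_r+2}$, so that for $\re(s_1),\dots,\re(s_{r-1})$ large the outer series converges uniformly there. Since the two sides agree for $\re(s_r)>1$, they agree as meromorphic functions of $s_r$; in particular the left side is holomorphic at $s_r=-n_r$, with value
\begin{equation}\label{eqn:atpoint-prop}
  \zeta_r(s_1,\dots,s_{r-1},-n_r)
   =\zeta(-n_r)\,\zeta_{r-1}(s_1,\dots,s_{r-1})
    -\sum_{0<n_1<\cdots<n_{r-1}}\frac{1}{n_1^{s_1}\cdots n_{r-1}^{s_{r-1}}}\sum_{m=1}^{n_{r-1}}m^{\,n_r}.
\end{equation}

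The rest is algebra. Faulhaber's formula gives $\sum_{m=1}^{N}m^{\,n}=\frac{1}{n+1}\sum_{j=1}^{n+1}\binom{n+1}{j}B_{n+1-j}N^{j}$; combining it with $\zeta(-n_r)=-B_{n_r+1}/(n_r+1)$, which absorbs the constant term, and with the elementary identity $\frac{1}{n+1}\binom{n+1}{j}=\frac{1}{n-j+1}\binom{n}{j}$ valid for $0\le j\le n$, the polynomial in $n_{r-1}$ occurring in \eqref{eqn:atpoint-prop} becomes
\[
  \zeta(-n_r)-\sum_{m=1}^{n_{r-1}}m^{\,n_r}
   =-\frac{1}{n_r+1}\,n_{r-1}^{\,n_r+1}
    +\sum_{k=0}^{n_r}\binom{n_r}{k}\,\zeta(-k)\,n_{r-1}^{\,n_r-k}.
\]
Substituting this into \eqref{eqn:atpoint-prop} and using $\sum_{0<n_1<\cdots<n_{r-1}}n_{r-1}^{\,j}/(n_1^{s_1}\cdots n_{r-1}^{s_{r-1}})=\zeta_{r-1}(s_1,\dots,s_{r-2},s_{r-1}-j)$ reproduces the stated formula for all $(s_1,\dots,s_{r-1})$ with large real parts, and since both sides are meromorphic on $\C^{r-1}$ it holds everywhere.

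I expect the only point requiring genuine care to be the passage from $\re(s_r)>1$ to $s_r=-n_r$ in \eqref{eqn:split-prop}: splitting off the finite power sum $\sum_{m=1}^{n_{r-1}}m^{-s_r}$ is only an equality of convergent series when $\re(s_r)>1$, and one must check that it persists as an identity of meromorphic functions after the outer summation — which is precisely what the uniform bound on the partial power sums, together with the known continuation of $\zeta_r$, delivers. Everything beyond that is Faulhaber's formula and bookkeeping with binomial coefficients.
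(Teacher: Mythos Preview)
Your argument is correct. The paper itself does not reprove this theorem; it cites it from \cite{AET} and remarks that it is the specialisation of Matsumoto's Mellin--Barnes identity (equation~(6.14) of \cite{M}, restated here as Theorem~\ref{thm:the original recurrence relation of MZF}): for general $s_r$ one has
\[
\zeta_r(s_1,\dots,s_r)=\frac{1}{s_r-1}\zeta_{r-1}(s_1,\dots,s_{r-1}+s_r-1)+\sum_{k=0}^{M-1}\binom{-s_r}{k}\zeta_{r-1}(s_1,\dots,s_{r-1}+s_r+k)\zeta(-k)+R_{M-\epsilon},
\]
where the remainder $R_{M-\epsilon}$ is a vertical contour integral carrying the factor $1/\Gamma(s_r)$; putting $s_r=-n_r$ and $M=n_r+1$ annihilates $R_{M-\epsilon}$ because $1/\Gamma(-n_r)=0$, and the statement follows.

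Your route is genuinely different and more elementary: you split off the inner tail $\sum_{m>N}m^{-s_r}=\zeta(s_r)-\sum_{m\le N}m^{-s_r}$ directly in the defining series, continue the resulting identity in the single variable $s_r$ by the uniform bound on the finite partial sums, and then evaluate the power sum $\sum_{m\le N}m^{\,n_r}$ via Faulhaber's formula. This avoids contour integration entirely and is self-contained. What it does \emph{not} give is a formula valid for arbitrary complex $s_r$ with an explicit remainder, and it is precisely that unspecialised Mellin--Barnes version which the paper needs: in the proof of Proposition~\ref{prop:the extended recurrence relation of MZF with integral terms} and Theorem~\ref{thm:the extended recurrence relation of MZF} the identity~(\ref{eqn:original rec rel}) is applied termwise, with $s_j$ still a variable, inside the harmonic-product expansion, and only at the very end is $s_j$ specialised to $-n_j$ so that all the integral terms $R_{M-\epsilon}$ vanish simultaneously. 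Your argument proves the present theorem cleanly but would not feed into that machinery.
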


We note that the above formula is obtained from  the equation (6.14) in \cite{M}.
Since then, the special values of the MZF at all-non-positive integer points have been studied 
in \cite{AT}, \cite{EM}, \cite{Ka}, \cite{Ko}, \cite{MOW}, \cite{O}, \cite{S}.  

While the special values of the MZF at {\it regular integer points}, 
that is, an index $(k_1,\dots,k_r)\in\Z^r$ for some $r>0$, 
do not seem to be investigated other than the work of \cite{FKMT1}. 
Our objective in this paper is to determine the values explicitly by
extending the recurrence relations of Theorem 0.1:


\begin{thm}{\rm (Theorem \ref{thm:the extended recurrence relation of MZF}).}
\label{extended recurrence relation of MZF in introduction}
  Let $r\in\N_{\ge 2}$, 
  $j\in\{ 2,\dots,r \}$ and 
  $n_j\in\N_0$.
  We have
  \begin{equation}
    \begin{split}
      &\zeta_r(s_1,\dots,s_{j-1},-n_j,s_{j+1}\dots,s_r)
       + \zeta_{r-1}(s_1\dots,s_{j-1},s_{j+1}-n_j,s_{j+2},\dots,s_r) \\
      &\hspace{3mm}
       = -\frac{1}
               {n_j+1}
               \zeta_{r-1}(s_1,\dots,s_{j-2},s_{j-1}-n_j-1,s_{j+1},\dots,s_r) \\
      &\hspace{5.7mm}
       + \frac{1}
              {n_j+1}
              \zeta_{r-1}(s_1,\dots,s_{j-1},s_{j+1}-n_j-1,s_{j+2},\dots,s_r) \\
      &\hspace{5.7mm}
       + \sum_{k=0}^{n_j}
          \binom{n_j}{k}
           \zeta_{r-1}(s_1,\dots,s_{j-2},s_{j-1}-n_j+k,s_{j+1},\dots,s_r)\zeta(-k) \\
      &\hspace{5.7mm}
       - \sum_{k=0}^{n_j}
          \binom{n_j}{k}       
           \zeta_{r-1}(s_1,\dots,s_{j-1},s_{j+1}-n_j+k,s_{j+2},\dots,s_r)\zeta(-k). 
    \end{split}
  \end{equation}
  The following also holds when $j=1$;
  \begin{equation}
    \begin{split}
      &\zeta_r(-n_1,s_2,s_3,\dots,s_r)
        + \zeta_{r-1}(-n_1+s_2,s_3,\dots,s_r) \\
      &= \frac{1}{n_1+1}
          \zeta_{r-1}(-n_1+s_2-1,s_{3},\dots,s_r) \\
      &\hspace{3mm}
       - \sum_{k=0}^{n_1}
          \binom{n_1}{k}       
           \zeta_{r-1}(-n_1+s_2+k,s_{3},\dots,s_r)\zeta(-k) 
       + \zeta(-n_1)\zeta_{r-1}(s_2,\dots,s_r)
    \end{split}
  \end{equation}
\end{thm}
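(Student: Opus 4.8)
The plan is to recognize the left-hand side as a single Euler--Zagier-type series in which the $j$-th summation index runs over a \emph{bounded} interval, to evaluate the resulting inner power-sum by the classical sum-of-powers formula, and then to reassemble the terms as $(r-1)$-fold zeta functions with shifted arguments; the whole computation is carried out on a translated octant of absolute convergence, and the general case follows by the known meromorphic continuation of the MZF.

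Fix $j$ with $2\le j\le r-1$ first. When $\re(s_i)$ is large enough for every $i\ne j$ (how large depends on $r$ and $n_j$), the point $(s_1,\dots,s_{j-1},-n_j,s_{j+1},\dots,s_r)$ lies in the domain of absolute convergence of $\zeta_r$, so
\[
  \zeta_r(s_1,\dots,s_{j-1},-n_j,s_{j+1},\dots,s_r)
   =\sum_{0<n_1<\cdots<n_{j-1}<m<n_{j+1}<\cdots<n_r}
      \frac{m^{\,n_j}}{n_1^{s_1}\cdots n_{j-1}^{s_{j-1}}\,n_{j+1}^{s_{j+1}}\cdots n_r^{s_r}},
\]
and the extra summand $\zeta_{r-1}(s_1,\dots,s_{j-1},s_{j+1}-n_j,s_{j+2},\dots,s_r)$ on the left is precisely the contribution of the ``diagonal'' $m=n_{j+1}$. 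Hence the left-hand side of the asserted identity equals
\[
  \sum_{C}\Bigl(\,\sum_{n_{j-1}<m\le n_{j+1}}m^{\,n_j}\Bigr)\prod_{i\ne j}n_i^{-s_i},
\]
where $C$ runs over all chains $0<n_1<\cdots<n_{j-1}<n_{j+1}<\cdots<n_r$; each term converges absolutely because $m^{n_j}\le n_{j+1}^{\,n_j}$ and there are at most $n_{j+1}$ admissible values of $m$, so the rearrangement below is legitimate.

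The one computational input is the elementary identity
\[
  \sum_{m=1}^{M}m^{N}
   =\frac{M^{N+1}}{N+1}-\sum_{k=0}^{N-1}\binom{N}{k}\zeta(-k)\,M^{\,N-k}
   \qquad(N\in\N_0,\ M\in\N),
\]
a reformulation of Faulhaber's sum-of-powers formula via the values $\zeta(-k)$. Writing $\sum_{n_{j-1}<m\le n_{j+1}}m^{n_j}=\sum_{m=1}^{n_{j+1}}m^{n_j}-\sum_{m=1}^{n_{j-1}}m^{n_j}$ and substituting, I would split the absolutely convergent sum over $C$ into its four pieces and identify each one by the bare observation that $n_{j+1}^{\alpha}\,n_{j+1}^{-s_{j+1}}=n_{j+1}^{-(s_{j+1}-\alpha)}$ turns $\sum_C$ into $\zeta_{r-1}(s_1,\dots,s_{j-1},s_{j+1}-\alpha,s_{j+2},\dots,s_r)$, while $n_{j-1}^{\alpha}\,n_{j-1}^{-s_{j-1}}$ turns it into $\zeta_{r-1}(s_1,\dots,s_{j-2},s_{j-1}-\alpha,s_{j+1},\dots,s_r)$. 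Taking $\alpha=n_j+1$ for the two leading terms and $\alpha=n_j-k$ for the two sums over $k$ reproduces the right-hand side, except that the two $k$-sums at first run only up to $n_j-1$; extending both to $k=n_j$ inserts the two terms $\pm\,\zeta(-n_j)\zeta_{r-1}(s_1,\dots,s_{j-1},s_{j+1},\dots,s_r)$, which cancel, so the stated form is obtained. Finally, the identity, now established on a non-empty open octant in the variables $(s_i)_{i\ne j}$, extends to all admissible arguments because both sides are meromorphic there, the hyperplane $s_j=-n_j$ not being contained in the singular locus \eqref{eqn:pole condition of mzf in introduction} of $\zeta_r$.

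The boundary cases cost little. For $j=1$ the index $m=n_1$ has lower bound $1$, so only $\sum_{m=1}^{n_2}m^{n_1}$ appears; the identity above then produces the single leading term $\tfrac1{n_1+1}\zeta_{r-1}(-n_1+s_2-1,s_3,\dots,s_r)$ and the single sum $-\sum_{k=0}^{n_1-1}\binom{n_1}{k}\zeta(-k)\zeta_{r-1}(-n_1+s_2+k,s_3,\dots,s_r)$, and extending this one sum to $k=n_1$ now leaves an uncancelled term $-\zeta(-n_1)\zeta_{r-1}(s_2,\dots,s_r)$, which is exactly the explicit summand $+\zeta(-n_1)\zeta_{r-1}(s_2,\dots,s_r)$ on the right. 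For $j=r$ the diagonal term and all $s_{j+1}$-dependent terms are vacuous, the inner sum is infinite, and the assertion is just Theorem \ref{original recurrence relation of MZF in introduction}, which may be quoted. The main obstacle is not a single estimate but the bookkeeping: keeping straight which slot of the $(r-1)$-fold function each monomial $n_{j\pm1}^{\alpha}$ occupies (the outcome is \emph{not} symmetric in $j-1$ and $j+1$, since one monomial sits to the left of the deleted index and the other to its right), and checking that one and the same translated octant lies inside the convergence region of $\zeta_r$ at $s_j=-n_j$ and of all four $(r-1)$-fold pieces, so that the term-by-term rearrangement — hence the concluding appeal to analytic continuation — is valid.
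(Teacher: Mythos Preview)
Your argument is correct and takes a genuinely different, far more elementary route than the paper. The paper develops a harmonic-algebra formalism (\S1), proves a general word identity (Proposition~\ref{prop:algebraic relation among words}), maps it to MZFs, and then feeds in Matsumoto's Mellin--Barnes representation (Theorem~\ref{thm:the original recurrence relation of MZF}) through a chain of technical lemmas (Lemmas~\ref{lem:the calculation of the LHS of the harmonic relation of MZFs}--\ref{lem:almost reminder term of the cal of RHS of harmonic rel among mzf}) to reach an identity valid for \emph{all} complex $s_j$ with explicit integral remainder terms (Proposition~\ref{prop:the extended recurrence relation of MZF with integral terms}); the theorem then drops out upon setting $s_j=-n_j$, which annihilates the integrals via $1/\Gamma(-n_j)=0$. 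You instead exploit the observation that for $j<r$ the hyperplane $s_j=-n_j$ already meets the absolute-convergence region once the remaining $\re s_i$ are large, merge the two left-hand summands into a single finite inner sum $\sum_{n_{j-1}<m\le n_{j+1}}m^{n_j}$, evaluate it by Faulhaber's formula rewritten with $\zeta(-k)$, and read off the four pieces as shifted $(r-1)$-fold zeta series. Your approach is much shorter and needs nothing beyond the known meromorphic continuation; what the paper's approach buys is the stronger intermediate Proposition~\ref{prop:the extended recurrence relation of MZF with integral terms} (an identity with explicit remainder for generic $s_j$) and reusable algebraic machinery. One wording quibble: in your $j=1$ paragraph, saying the ``uncancelled term $-\zeta(-n_1)\zeta_{r-1}(s_2,\dots,s_r)$'' is ``exactly'' the summand $+\zeta(-n_1)\zeta_{r-1}(s_2,\dots,s_r)$ reads like a sign contradiction; what you mean is that the artificially inserted $k=n_1$ summand is \emph{compensated} by the explicit $+\zeta(-n_1)\zeta_{r-1}(\dots)$ on the right, and that is indeed correct.
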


We note that when $j=r$, $(0.2)$ recovers (0.1). 
As an important application of this theorem, the following theorem is obtained.

\begin{thm}{\rm (Theorem \ref{thm:main theorem}).}
\label{main theorem in introduction}
  The special value of the MZF at any regular integer point can be presented explicitly as a 
  rational linear combination of MZVs.
\end{thm}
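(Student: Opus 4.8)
The plan is to prove Theorem \ref{main theorem in introduction} by induction on the depth $r$, peeling off one variable at a time by means of the recurrence relations of Theorem \ref{extended recurrence relation of MZF in introduction}. For $r=1$ a regular integer point is an integer $k\neq 1$, and $\zeta(k)$ is the multiple zeta value $\zeta(k)$ when $k\ge2$ and a rational number (a value of the Riemann zeta function at a non-positive integer) when $k\le0$; in either case $\zeta(k)$ is a rational linear combination of multiple zeta values, so the base case holds.

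For the inductive step let $r\ge2$ and let $(k_1,\dots,k_r)\in\Z^r$ be a regular point, i.e.\ one lying on none of the hyperplanes of \eqref{eqn:pole condition of mzf in introduction}. If $k_i\ge1$ for every $i$, then avoiding $s_r=1$ forces $k_r\ge2$, so $(k_1,\dots,k_r)$ lies in the region of absolute convergence and $\zeta_r(k_1,\dots,k_r)$ is literally a multiple zeta value. Otherwise fix an index $j$ with $k_j\le0$ and set $n_j=-k_j$. Applying Theorem \ref{extended recurrence relation of MZF in introduction} (the case $j\ge2$ or the case $j=1$, as appropriate) and transposing the $(r-1)$-fold term from the left-hand side, we obtain $\zeta_r(k_1,\dots,k_r)$ as a $\Q$-linear combination of: values of $(r-1)$-fold multiple zeta functions at certain integer points; the rational numbers $\zeta(-k)$, $0\le k\le n_j$; and products of these. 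Each such product is merely a rational number times an $(r-1)$-fold value, so no product of two nontrivial multiple zeta values occurs and neither the harmonic nor the shuffle product is needed. If every surviving $(r-1)$-fold value is taken at a \emph{regular} integer point of $\zeta_{r-1}$, the induction hypothesis rewrites it as a rational linear combination of multiple zeta values, finishing the step.

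The crux — and the step I expect to be the main obstacle — is therefore to show that regularity is preserved, which I would settle by a direct inspection of \eqref{eqn:pole condition of mzf in introduction}. Every $(r-1)$-fold argument occurring on the right-hand side is obtained from $(k_1,\dots,k_r)$ either by a \emph{contraction} — delete the $j$-th entry and replace an adjacent entry $k_a$ by $k_a+k_{a+1}+c$, where the shift $c$ lies in $\{-1,0\}$ for the terms carrying a fixed rational coefficient and $c$ equals the summation index $k\in\{0,\dots,n_j\}$ for the terms carrying the coefficient $\pm\binom{n_j}{k}\zeta(-k)$ — or, only in the case $j=1$, simply by deleting the first entry (giving the term $\zeta(-n_1)\zeta_{r-1}(k_2,\dots,k_r)$). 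Under a contraction a tail sum $k_\ell+\cdots+k_r$ that does not meet the contracted slot is unchanged, while one that meets it is shifted by $c\ge-1$ and has its length decreased by one; comparing with \eqref{eqn:pole condition of mzf in introduction} and using $c\ge-1$ one finds that every depth-$m$ regularity condition of $\zeta_{r-1}$ follows from the depth-$m$ or depth-$(m+1)$ regularity of $(k_1,\dots,k_r)$, with the sole exception of the condition ``last argument $\neq 1$'', which can break only when the contraction is made at the last slot and $c\ge1$. In that case $k_{r-1}+k_r=1-c\le0$, and the depth-$2$ regularity of $(k_1,\dots,k_r)$, which excludes $k_{r-1}+k_r\in\{2,1,0,-2,-4,\dots\}$, forces $1-c$ to be odd, hence $c$ to be an even integer $\ge2$; but such a contraction occurs only among the terms with coefficient $\pm\binom{n_j}{k}\zeta(-k)$ and $c=k$, so the coefficient contains $\zeta(-c)=0$ and that term vanishes identically and may be dropped. (For the truncation term one checks separately that the inequalities defining regularity of $(k_2,\dots,k_r)$ for $\zeta_{r-1}$ form a sub-collection of those defining regularity of $(k_1,\dots,k_r)$ for $\zeta_r$.) After this bookkeeping every remaining $(r-1)$-fold value sits at a regular integer point, the induction hypothesis applies, and Theorem \ref{main theorem in introduction} follows. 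The genuinely delicate points are the interaction of the shift $c$ with the two exceptional conditions $s_r=1$ and $s_{r-1}+s_r\in\{2,1,0,-2,-4,\dots\}$ together with the attendant vanishing of $\zeta(-c)$, and the low cases $r=2,3$ — where $\zeta_{r-1}$ has depth $1$ or $2$ so the generic depth-$m$ inequality is unavailable — which must be verified directly but behave in exactly the same way.
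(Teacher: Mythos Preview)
Your proof is correct and follows the same strategy as the paper: induct on the depth $r$ and apply the recurrence of Theorem~\ref{extended recurrence relation of MZF in introduction} to reduce to depth $r-1$. You are in fact more careful than the paper on the crux you identify: the paper simply asserts (without verification) that every $(r-1)$-fold index arising on the right-hand side is again a regular integer point, whereas you supply the tail-sum bookkeeping and correctly observe that an index with last entry equal to $1$ can appear only with coefficient $\zeta(-k)$ for some even $k\ge 2$, hence with coefficient zero---a subtlety the paper leaves implicit.
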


Explicit formulas of the special values at all regular points up to depth $3$ are presented in $\S$ 2.2.

    Saha (\cite{Sah}) obtained the Laurent expansion of $\zeta_r(s_1,\dots,s_r)$ 
    around any integer point 
    by introducing multiple Stieltjes constants $\gamma^{(a_1,\dots,a_r)}_{k_1,\dots,k_r}$, 
    where $r>0$, $(a_1,\dots,a_r)\in\Z^r$ and $k_1,\dots,k_r\in\N_0$. 
    He further observed that its regular part $\zeta_r^{\text{Reg}}(s_1,\dots,s_r)$ is expressed as 
    a finite rational linear summation of $\zeta_k(s_1,\dots,s_k)$'s ($k=1,\dots,r$).
    It should be noted that our result enables us to calculate $\gamma^{(a_1,\dots,a_r)}_{0,\dots,0}$ 
    explicitly when $(a_1,\dots,a_r)$ is a regular integer point of the MZF.

The plan of our paper goes as follows. 
In $\S$1, we review the harmonic algebra and show certain relations among words.  
In $\S$2, we translate the formulae of the words obtained in $\S$ 1 into the formulae of MZFs and prove the recurrence relations of MZFs. And then we prove the above main results and give some examples.


\section{Algebraic framework}
We prepare an algebra to deal formally with the computation of the MZF 
and show some relations on that algebra.

\subsection{The harmonic algebra}
In this subsection, we define an algebra with the structure of the harmonic product. This construction is based on \cite{fk}.

  \begin{defini}{\rm (\cite[\S1]{fk})}.\label{def:algebraic set up, non-commutative ring}
   {\rm
    Put $S:=\{s_i\}_{i\in\N}$ and  
    $S_{\N}:=\{\sum_{i=1}^ra_is_i \,|\,r\in\N,\,a_i\in\N_0,\, \sum_{i=1}^ra_i>0 \}$ 
    (i.e. $S_{\N}$ is the commutative semigroup generated by $S$).
    Let $S_{\N}^{\bullet}$ be the non-commutative free monoid generated by $S_{\N}$. 
    We denote the empty word by 1 (as the unit) and denote each elements of $S_{\N}^{\bullet}$ by 
    $(u_1, u_2,\dots,u_k) \in S_{\N}^{\bullet}$, $(u_j\in S_{\N})$ as a sequence.
    We set $\mch:=\Q\langle S_{\N}\rangle$ to be the non-commutative polynomial ring 
    generated by $S_{\N}$.
   }
  \end{defini}
Notice that $\mch$ is the $\Q$-vector space generated by $S_{\N}^{\bullet}$.
We sometimes call the element of $S_{\N}$ {\it word}.

We next review the harmonic product.
  
  \begin{defini}{\rm (\cite[\S2]{h}).}\label{def:algebraic set up, harmonic product}
   {\rm
     The {\it harmonic product} $*$ : $\mch \times \mch $ $\rightarrow$ $\mch$ is 
     the $\Q$-bilinear map defined by;
     \begin{itemize}
       \item[(i)] for any $\tbo\in S_{\N}^{\bullet}$, 
                 \[
                   \tbo * 1 = 1 * \tbo := \tbo.
                 \] 
      \item[(ii)] for any $\tbo_1$, $\tbo_2\in S_{\N}^{\bullet}$ and 
                  $u_1$, $u_2\in S_{\N}$,
                  \[
                    (\tbo_1, u_1) * (\tbo_2, u_2) 
                    := (\{\tbo_1 * (\tbo_2, u_2)\}, u_1) 
                       + (\{(\tbo_1, u_1) * \tbo_2\}, u_2) + (\{\tbo_1 * \tbo_2\}, u_1+u_2).
                  \]
     \end{itemize}
   } 
  \end{defini}

Then the pair $(\mch,*)$ forms the commutative, associative, and unital $\Q$-algebra. 
We prepare some useful notations.

\begin{nota}\label{notation:algebraic set up, length & absolute value & reverse}
 {\rm
    The map $|\bullet|\,$:$\,S_{\N}^{\bullet}\setminus\{1\}$ $\rightarrow$ $S_{\N}$ 
    is defined as follows;
    for any $\tbo=(u_1, u_2,\dots, u_k)\in S_{\N}^{\bullet}$,
    \[
      |\tbo| = u_1+u_2+\cdots+u_k\,\in S_{\N}.
    \]
    
    The map $l\,$:$\,S_{\N}^{\bullet}$ $\rightarrow$ $\N_0$ 
    is defined as follows;
    for any $\tbo=(u_1, u_2,\dots, u_k)\in S_{\N}^{\bullet}\setminus\{1\}$,
    \[
      l(\tbo) = k
    \]
    and $l(1):=0$.
    
    The map $\rev\,$:$\,S_{\N}^{\bullet}$ $\rightarrow$ $S_{\N}^{\bullet}$ 
    is defined as follows;
    for any $\tbo=(u_1, u_2,\dots, u_k)\in S_{\N}^{\bullet}\setminus\{1\}$, 
    \[
      \rev(\tbo) = (u_k, u_{k-1},\dots, u_1)\in S_{\N}^{\bullet}
    \]
    and $\rev(1):=1$.
 }
\end{nota}

\begin{defini}{\rm (\cite[\S2]{h}).}\label{def:algebraic set up, coproduct}
 {\rm
    We define the $\Q$-linear map $\Delta$ : $\mch$ $\rightarrow$ $\mch\otimes\mch$ as follows.
    \[
      \Delta = \sum_{(\alpha_1,\alpha_2)=\tbo}\alpha_1\otimes\alpha_2
      \quad (\tbo\in S_{N}^{\bullet}).
    \]
    Note that $\alpha_1$ and $\alpha_2$ can be the empty word 1. 
    We define the $\Q$-linear map $S$ : $\mch$ $\rightarrow$ $\mch$ as follows.
    \[
      S(\tbo) = (-1)^{l(\tbo)}
                 \sum_{
                  \substack{ (\alpha_1,\dots,\alpha_k)=\tbo \\
                             k\in\N\\\
                             \alpha_m \ne 1,\,(m=1,\dots,k)
                           }
                      }
                   {\rm Rev}(|\alpha_1|,\dots,|\alpha_k|)
                   \quad (\tbo \in S_{N}^{\bullet}\setminus\{1\}),
    \]
    and $S(1)=1$.
 }
\end{defini}

The triple $(*,\Delta,S)$ gives a Hopf algebra structure on $\mch$ (see \cite[Theorem 3.2]{h}).

\begin{defini}{\rm (cf. \cite[\S2, Definition 1 and 2]{im}).}
\label{def:the C-algebra generated by MZFs}
  {\rm
    Let $s_i$ be the complex parameters for $r\in\N$, $i=1,\dots,r$.
    Put $T_r:=\{t=\sum_{i=1}^ra_is_i \,|\,a_i\in\N_0,\,\sum_{i=1}^ra_i>0 \}$ and 
    $T_r^n:=\{\textbf{t}=(t_1,\dots,t_n)\,|\,t_i\in T_r \},\ (n\in\N_0)$ where 
    $T_r^0:=\emptyset$.
    We set $\mcz_r:=\Q \langle \zeta(\textbf{t})=\zeta(t_1,\dots,t_n) \,|\,
                               n\in\N_0,\,\textbf{t}\in T_r^n
                       \rangle$ 
    to be the $\Q$-linear space generated by the multiple zeta functions. We assume that 
    $\zeta(\emptyset):=1$.
  }
\end{defini}

Note that $\mcz_r$ is the subspace of meromorphic functions on $\C^r$. 
We regard $\mcz_r$ as the subset of $\mcz_{r+1}$ naturally for $r\in\N_0$. 
We look at the simplest example.

\begin{exam}\label{example:the  C-algebra generated by MZFs}
 {\rm When $r=1$, $\mcz_1$ is}
  \[
    \mcz_1=\Q\langle \zeta_n(k_1s_1,\dots,k_ns_1) \,|\,n\in\N_0,\,k_i\in\N \rangle.
  \]
\end{exam}

Put $\mcz:=\bigcup_{r=0}^{\infty}\mcz_r$ where $\mcz_0:=\Q$. 
$\mcz$ is the $\Q$-algebra because the product of two MZFs as the meromorphic functions 
can be written as the sum of the MZFs by the harmonic product.

\begin{defini}\label{def:Q-linear map from Hopf algebra to algebra generated by MZFs}
  {\rm
    We define the $\Q$-linear map 
    \[
      \zeta^*:(\mch,*) \rightarrow \mcz
    \] as follows; 
    $\zeta^*(1)=1$ and for any nonempty word $(u_1,\dots,u_k)\in S_{\N}^{\bullet}$ ($k\in\N$),
    \[
      \zeta^*(u_1,\dots,u_k):=\zeta_k(u_1,\dots,u_k).
    \]
  }
\end{defini}

Note that by construction $\zeta^*$ is a $\Q$-algebraic homomorphism from $(\mch,*)$ to $\mcz$.


\subsection{Calculation of the harmonic product}
In this subsection, we prepare some algebraic relations among words which are required to prove 
our main results in $\S$2.

\begin{lem}\label{lem:antipode relation}
  For any $\tbo\in S_{\N}^{\bullet}\setminus\{1\}$, we have the following in $(\mch, *)$;
  \[
    \sum_{
     \substack{
               (\alpha_1,\alpha_2)=\tbo
              }
         }
         (-1)^{l(\alpha_1)}f(\alpha_1)*\alpha_2
    = 0,
  \]
  where $f\,$:$\,S_{\N}^{\bullet}$ $\rightarrow$ $\mch$ is defined by $f(1)=1$ and
  \[
    f(\gamma) = \sum_{
                 \substack{
                           (\beta_1,\dots,\beta_k)=\gamma \\
                            \beta_m \ne 1
                          }
                     }
                     \rev(|\beta_1|,\dots,|\beta_k|), 
                     \quad(\gamma\in S_{\N}^{\bullet}\setminus\{1\}).
  \]
\end{lem}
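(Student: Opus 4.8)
The plan is to recognize the identity as the defining property of the antipode in the Hopf algebra $(\mch,*,\Delta,S)$ introduced in Definition~\ref{def:algebraic set up, coproduct}. First I would compare the auxiliary map $f$ with the map $S$: reading off the two definitions, for every nonempty word $\gamma\in S_{\N}^{\bullet}$ one has $S(\gamma)=(-1)^{l(\gamma)}f(\gamma)$, and this also holds for $\gamma=1$ since $f(1)=S(1)=1$ and $l(1)=0$. Hence $f(\gamma)=(-1)^{l(\gamma)}S(\gamma)$ for every $\gamma$, and substituting this into the sum makes the two sign factors $(-1)^{l(\alpha_1)}$ cancel, so that the assertion to be proved becomes
\[
 \sum_{(\alpha_1,\alpha_2)=\tbo} S(\alpha_1)*\alpha_2 = 0
 \qquad (\tbo\in S_{\N}^{\bullet}\setminus\{1\}).
\]

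Next I would read the left-hand side through the coproduct. Writing $m$ for the harmonic product regarded as a linear map $\mch\otimes\mch\to\mch$, Definition~\ref{def:algebraic set up, coproduct} gives $\Delta(\tbo)=\sum_{(\alpha_1,\alpha_2)=\tbo}\alpha_1\otimes\alpha_2$, so the displayed sum is exactly $\bigl(m\circ(S\otimes\mathrm{id})\circ\Delta\bigr)(\tbo)$. Since $(*,\Delta,S)$ is a Hopf algebra structure on $\mch$ by \cite[Theorem 3.2]{h}, the antipode axiom reads $m\circ(S\otimes\mathrm{id})\circ\Delta=\eta\circ\varepsilon$, with $\eta\colon\Q\to\mch$ the unit and $\varepsilon$ the counit of the deconcatenation coalgebra. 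It then remains to observe that $\varepsilon$ vanishes on every nonempty word: applying the counit axiom $(\varepsilon\otimes\mathrm{id})\circ\Delta=\mathrm{id}$ to $\tbo=(u_1,\dots,u_n)$ with $n\ge 1$, only the summand that splits off the empty prefix can contribute, which forces $\varepsilon(\alpha)=0$ for every nonempty $\alpha$. Hence the sum equals $\eta(\varepsilon(\tbo))=0$, as claimed.

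I do not expect a genuine obstacle here: the whole content is the observation that $f$ is, up to the sign $(-1)^{l}$, the antipode $S$, after which the statement is a formal consequence of the Hopf structure already established in \cite{h}. The only care required is bookkeeping with the empty word in the definitions of $\Delta$, $S$ and $f$, together with making the unit and counit conventions explicit, since these are not spelled out in the excerpt. If one prefers a self-contained argument that avoids invoking the counit, the same identity can be proved by induction on $l(\tbo)$: peeling off the last letter of $\tbo$ and combining the recursion for $*$ from Definition~\ref{def:algebraic set up, harmonic product} with the deconcatenation recursion satisfied by $f$ makes the sum telescope down to the case of length $l(\tbo)-1$, the case $l(\tbo)=1$ being a direct check.
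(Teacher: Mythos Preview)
Your argument is correct and is essentially the paper's own proof: both identify $f$ with $(-1)^{l}S$, rewrite the sum as $*\circ(S\otimes\mathrm{id})\circ\Delta(\tbo)$, and invoke the Hopf antipode axiom $*\circ(S\otimes\mathrm{id})\circ\Delta=u\circ\epsilon$ together with $\epsilon(\tbo)=0$ for nonempty $\tbo$. Your additional remarks (the check at $\gamma=1$, the derivation of $\varepsilon=0$ on nonempty words, and the optional inductive alternative) go slightly beyond what the paper writes but do not change the route.
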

%
\begin{proof}
  Let $(1\ne)\,\tbo$ be a word, $u$ : $\Q$ $\rightarrow$ $\mch$ be the unit, 
  and $\epsilon$ : $\mch$ $\rightarrow$ $\Q$ be the counit 
  (that is $\epsilon(1)=1$ and $\epsilon(\tbo)=0$ for all non-empty word $\tbo$).
  Note that the following commutative diagram holds 
  for the maps $*$, $\Delta$, $S$, $u$, and $\epsilon$;
  \[
    \xymatrix@M=8pt{
        & &\mch\otimes\mch  \ar[rr]^{S\otimes {\rm id}_{\mch}}& & \mch\otimes\mch \ar[dr]^{*}
       \ar@{}[lld]|{\circlearrowleft} \\
        &\mch\ar[ur]^{\Delta} \ar[rr]_{\epsilon}& &\Q \ar[rr]_{u} &&\mch 
    }
  \]
  Since $u\circ \epsilon(\tbo)=0$ for all non-empty word $\tbo$, we get
  \[
    * \circ (S\otimes {\rm id}_{\mch}) \circ \Delta (\tbo) = u\circ \epsilon(\tbo)=0.
  \]
  Therefore since $S(\tbo)=(-1)^{l(\tbo)}f(\tbo)$ for all word $\tbo\,(\ne1)$, we have
  \[
    * \circ (S\otimes {\rm id}_{\mch}) \circ \Delta(\tbo)
    = \sum_{
     \substack{
               (\alpha_1,\alpha_2)=\tbo
              }
         }
         (-1)^{l(\alpha_1)}f(\alpha_1)*\alpha_2
    = 0.
  \]
\end{proof}

Applying Lemma \ref{lem:antipode relation} 
to $\tbo=(u_1,u_2,\dots,u_r)$, ($u_i\in S_{\N}$) yields the following corollary.

\begin{cor}\label{cor:the special case of antipode relation}
    Let $r$ be a positive integer. For any $(u_1,u_2,\dots,u_r)\in S_{\N}^{\bullet}$ 
    ($u_i\in S_{\N}$), 
    we have
    \[
      (u_1,u_2,\dots,u_r) + \sum_{i=1}^r
                              (-1)^{i}
                            \sum_{
                              \substack{
                                        \bullet_k=,\rm{or}\, + \\
                                        2\le k \le i
                                       }
                                 }
                                 (u_i\bullet_i\cdots \bullet_2u_1)
                                 * (u_{i+1},\dots,u_r)
      = 0.
    \]
    Here the term of $i=1$ in the summation means $-(u_1)*(u_2,\dots,u_r)$.
\end{cor}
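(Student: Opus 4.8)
The plan is to apply Lemma~\ref{lem:antipode relation} directly to $\tbo=(u_1,u_2,\dots,u_r)$ and to compute the auxiliary map $f$ on the initial segments of $\tbo$ explicitly. First I would enumerate the deconcatenations $(\alpha_1,\alpha_2)=\tbo$: since $\tbo=(u_1,\dots,u_r)$, these are exactly the pairs $\alpha_1=(u_1,\dots,u_i)$, $\alpha_2=(u_{i+1},\dots,u_r)$ for $i=0,1,\dots,r$, with $\alpha_1=1$ when $i=0$ and $\alpha_2=1$ when $i=r$. The $i=0$ term contributes $(-1)^0 f(1)*\tbo=1*\tbo=(u_1,\dots,u_r)$, the first summand of the asserted identity; for $i\ge1$ we have $l(\alpha_1)=i$. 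Hence Lemma~\ref{lem:antipode relation} becomes
\[
  (u_1,\dots,u_r)+\sum_{i=1}^r(-1)^i\,f(u_1,\dots,u_i)*(u_{i+1},\dots,u_r)=0,
\]
and it only remains to identify $f(u_1,\dots,u_i)$ for $i\ge1$.

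The key step is to prove that
\[
  f(u_1,\dots,u_i)=\sum_{\substack{\bullet_k=,\,\mathrm{or}\,+\\ 2\le k\le i}}(u_i\bullet_i\cdots\bullet_2 u_1),
\]
where both sides are $(u_1)$ when $i=1$. For this, I would observe that the deconcatenations $(\beta_1,\dots,\beta_k)=(u_1,\dots,u_i)$ into non-empty words correspond bijectively to the subsets $C\subseteq\{1,\dots,i-1\}$ of ``cut positions'' ($m\in C$ meaning $u_m$ is the last letter of some $\beta_j$), and hence to the choices of separators $(\bullet_2,\dots,\bullet_i)$ with $\bullet_{m+1}=,$ precisely when $m\in C$. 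Under this correspondence, if the block $\beta_j$ consists of the consecutive letters $u_a,u_{a+1},\dots,u_b$ then $|\beta_j|=u_a+\cdots+u_b$, and reading the reversed letter list $u_i,u_{i-1},\dots,u_1$ while inserting the chosen separators $\bullet_i,\dots,\bullet_2$ merges exactly the letters that lie in a common $\beta_j$ (a ``$+$'' between $u_{m+1}$ and $u_m$ signifying $m\notin C$) and lists the resulting blocks in the order $\beta_k,\beta_{k-1},\dots,\beta_1$. Therefore $(u_i\bullet_i\cdots\bullet_2 u_1)=(|\beta_k|,\dots,|\beta_1|)=\rev(|\beta_1|,\dots,|\beta_k|)$, which is precisely the term that this deconcatenation contributes to $f(u_1,\dots,u_i)$; summing over all deconcatenations gives the displayed formula.

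Substituting this identity into the relation from Lemma~\ref{lem:antipode relation} yields the statement, the $i=1$ summand being $(-1)^1 f(u_1)*(u_2,\dots,u_r)=-(u_1)*(u_2,\dots,u_r)$, in accordance with the stated convention. The only genuine obstacle is the bookkeeping in the computation of $f(u_1,\dots,u_i)$: one must keep careful track of the reversal $\rev$ and check that inserting ``$+$'' or ``$,$'' into the reversed sequence $u_i,\dots,u_1$ really does reproduce $\rev$ applied to the list of block-sums $(|\beta_1|,\dots,|\beta_k|)$. Once that bijection is established, the rest is a routine substitution.
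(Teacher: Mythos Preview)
Your proposal is correct and follows exactly the approach of the paper: the paper states only that the corollary follows by applying Lemma~\ref{lem:antipode relation} to $\tbo=(u_1,\dots,u_r)$, and you have supplied precisely that application together with the explicit identification of $f(u_1,\dots,u_i)$ with $\sum_{\bullet_k}(u_i\bullet_i\cdots\bullet_2 u_1)$ via the bijection between non-empty block decompositions and choices of separators. The only difference is that you spell out this unpacking in detail, whereas the paper leaves it implicit.
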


By using this corollary, we can obtain the following algebraic relation among the words.

\begin{prop}\label{prop:algebraic relation among words}
  Let $n\in\N$ and $u$, $v_1$, $v_2$, $\dots$, $v_n\in S_{\N}$
  Then we have the following equation for any word $\tbo\in S_{\N}^{\bullet}$;
  \begin{equation}
   \begin{split}
   \label{eqn:general algebraic relation among words}
    &(\tbo,u) * (v_{1},\dots,v_n) \\
    &=  (\tbo,u,v_{1},\dots,v_n) 
        + (\tbo,u+v_{1},\dots,v_n) \\
    &\hspace{3mm}
        + \sum_{i=1}^{n} 
           (-1)^{i-1} \sum_{
                       \substack{
                                 \bullet_k=,\,\rm{or}\, + \\
                                 2\le k \le i
                                }
                           }
                           (\{\tbo*(v_{i} \bullet_i \cdots \bullet_2 v_{1})\},u)
                           * (v_{i+1},\dots,v_n).
   \end{split}
  \end{equation}
  Here the term of $i=1$ in the summation is 
  \[
    (\{ \tbo * (v_1) \},u)*(v_2,\dots,v_n).
  \]
\end{prop}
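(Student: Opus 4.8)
The plan is to argue by induction on $n$, using the recursive definition of the harmonic product (Definition~\ref{def:algebraic set up, harmonic product}(ii)) together with Corollary~\ref{cor:the special case of antipode relation} to collapse the alternating sums that arise. Throughout, write $L_n$ for the left-hand side and $R_n$ for the right-hand side of the identity in the statement.

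For the base case $n=1$, unwinding Definition~\ref{def:algebraic set up, harmonic product}(ii) with $(v_1)=(1,v_1)$ gives
\[
  (\tbo,u)*(v_1) = (\{\tbo*(v_1)\},u) + (\tbo,u,v_1) + (\tbo,u+v_1),
\]
and this is exactly $R_1$: the only term of the sum is $i=1$, which has an empty range for the $\bullet_k$ and an empty tail $(v_2,\dots,v_1)$, hence equals $(\{\tbo*(v_1)\},u)$. For the inductive step, let $n\ge 2$ and assume the identity for $n-1$, for all $\tbo$, $u$ and all words $v_i$. Applying Definition~\ref{def:algebraic set up, harmonic product}(ii) to $L_n$ with $(v_1,\dots,v_n)=\big((v_1,\dots,v_{n-1}),v_n\big)$ yields
\[
  L_n = (\{\tbo*(v_1,\dots,v_n)\},u) + (\{L_{n-1}\},v_n) + (\{\tbo*(v_1,\dots,v_{n-1})\},u+v_n),
\]
where $L_{n-1}=(\tbo,u)*(v_1,\dots,v_{n-1})$. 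By the induction hypothesis $L_{n-1}=R_{n-1}$; I would substitute the explicit form of $R_{n-1}$ into $(\{L_{n-1}\},v_n)$ and then, for each resulting term, use Definition~\ref{def:algebraic set up, harmonic product}(ii) in the reverse direction to rewrite
\[
  \big(\{(\{\tbo*(v_i\bullet_i\cdots\bullet_2 v_1)\},u)*(v_{i+1},\dots,v_{n-1})\},v_n\big)
\]
as the difference of $(\{\tbo*(v_i\bullet_i\cdots\bullet_2 v_1)\},u)*(v_{i+1},\dots,v_n)$ and two correction terms ending in $u$ and in $u+v_n$ respectively (with the convention $*\,1=\mathrm{id}$ covering the boundary cases where a tail is empty).

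After these substitutions, comparing with the explicit form of $R_n$, the terms of the first type rebuild all but the top ($i=n$) summand of $R_n$ and cancel against it, leaving
\[
  R_n-(\{L_{n-1}\},v_n) = (\{\tbo*X\},u) + (\{\tbo*Y\},u+v_n),
\]
where, after pulling $\tbo*{}$ outside by bilinearity and associativity of $*$,
\[
  X=\sum_{i=1}^{n}(-1)^{i-1}\!\!\sum_{\substack{\bullet_k=,\,\mathrm{or}\,+\\ 2\le k\le i}}\!\!(v_i\bullet_i\cdots\bullet_2 v_1)*(v_{i+1},\dots,v_n),\qquad Y=\sum_{i=1}^{n-1}(-1)^{i-1}\!\!\sum_{\substack{\bullet_k=,\,\mathrm{or}\,+\\ 2\le k\le i}}\!\!(v_i\bullet_i\cdots\bullet_2 v_1)*(v_{i+1},\dots,v_{n-1}).
\]
The crucial point is that Corollary~\ref{cor:the special case of antipode relation}, applied to the sequences $(v_1,\dots,v_n)$ and $(v_1,\dots,v_{n-1})$, says precisely that $X=(v_1,\dots,v_n)$ and $Y=(v_1,\dots,v_{n-1})$. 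Hence $R_n-(\{L_{n-1}\},v_n)=(\{\tbo*(v_1,\dots,v_n)\},u)+(\{\tbo*(v_1,\dots,v_{n-1})\},u+v_n)=L_n-(\{L_{n-1}\},v_n)$, so $R_n=L_n$ and the induction closes.

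The one genuinely delicate step, I expect, is precisely the recognition that the signed sum emerging from $R_n$ together with the correction terms produced by reversing the recursion assembles into exactly the left-hand side of Corollary~\ref{cor:the special case of antipode relation}; once this is seen, the rest is bookkeeping — tracking index ranges when peeling off $v_n$ and handling the boundary cases $i=n-1$ and $i=n$ where some tails become empty. Beyond the defining recursion of $*$ and Corollary~\ref{cor:the special case of antipode relation}, no new ingredient is required.
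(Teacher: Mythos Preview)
Your proof is correct and follows essentially the same approach as the paper's: induction on $n$, one application of the defining recursion of $*$ to peel off $v_n$, the induction hypothesis for $L_{n-1}$, a second application of the recursion to each summand, and then Corollary~\ref{cor:the special case of antipode relation} applied to the sequences $(v_1,\dots,v_n)$ and $(v_1,\dots,v_{n-1})$ to collapse the alternating sums. The only cosmetic difference is in bookkeeping: the paper computes $L_n$ forward and isolates quantities $A$ and $A'$ (your $X$ minus its $i=n$ term plus $(v_1,\dots,v_n)$, and your $Y$ minus $(v_1,\dots,v_{n-1})$), using Corollary~\ref{cor:the special case of antipode relation} to show $A=(-1)^{n-1}\sum_{\bullet}(v_n\bullet_n\cdots\bullet_2 v_1)$ and $A'=0$, whereas you compute $R_n-(\{L_{n-1}\},v_n)$ and use the corollary to identify $X=(v_1,\dots,v_n)$, $Y=(v_1,\dots,v_{n-1})$ directly; the two are equivalent rearrangements of the same identity.
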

%
\begin{proof}
  We show the claim by induction on $n$. 
  The claim for $n=1$ immediately follows from the harmonic product. Indeed we have
  \begin{align*}
    (\tbo,u)*(v_1) &= (\{(\tbo,u)*1\},v_1) + (\{\tbo*(v_1)\},u) + (\{\tbo*1\},u+v_1) \\
                   &= (\tbo,u,v_1) + (\tbo,u+v_1) + (\{\tbo*(v_1)\},u).
  \end{align*}
  
  Let $n\ge2$. 
  We calculate the left-hand side of the claim by using the harmonic product;
  \begin{equation}
   \begin{split}
    &(\tbo,u) * (v_{1},\dots,v_n) \\
    &= (\{(\tbo,u) * (v_{1},\dots,v_{n-1})\},v_n) + (\{\tbo * (v_{1},\dots,v_n)\},u) \\
    &\hspace{49mm}
     + (\{\tbo * (v_{1},\dots,v_{n-1})\},u + v_n).
    \intertext{
      By the induction assumption on the case of $n-1$ in the first term, we have
              }
    &= (\tbo,u,v_{1},\dots,v_{n-1},v_n) 
     + (\tbo,u+v_{1},\dots,v_{n-1},v_n) \\
    &+ \sum_{i=1}^{n-1} 
        (-1)^{i-1}
         \sum_{
          \substack{
                    \bullet_k=,\,\rm{or}\, + \\
                    2\le k \le i
                   }
              }
              (\{ {(\{\tbo*(v_{i} \bullet_i \cdots \bullet_2 v_{1})\},u)
               * (v_{i+1},\dots,v_{n-1})}\},v_n) \\
    &+ (\{\tbo * (v_{1},\dots,v_n)\},u) 
     + (\{\tbo * (v_{1},\dots,v_{n-1})\},u + v_n).
    \intertext{
      By applying the harmonic product to the summand of the third term, we have
              }
    &= (\tbo,u,v_{1},\dots,v_{n-1},v_n) 
     + (\tbo,u+v_{1},\dots,v_{n-1},v_n) \\
    &+ \sum_{i=1}^{n-1} 
        (-1)^{i-1}
         \sum_{
          \substack{
                    \bullet_k=,\,\rm{or}\, + \\
                    2\le k \le i
                   }
              }
              (\{\tbo*(v_{i} \bullet_i \cdots \bullet_2 v_{1})\},u)
              * (v_{i+1},\dots,v_{n-1},v_n) \\
    &- \sum_{i=1}^{n-1} 
        (-1)^{i-1}
         \sum_{
          \substack{
                    \bullet_k=,\,\rm{or}\, + \\
                    2\le k \le i
                   }
              }
              (\{\tbo*(v_{i} \bullet_i \cdots \bullet_2 v_{1})
              * (v_{i+1},\dots,v_{n-1},v_n)\},u)\\
    &- \sum_{i=1}^{n-1} 
        (-1)^{i-1}
         \sum_{
          \substack{
                    \bullet_k=,\,\rm{or}\, + \\
                    2\le k \le i
                   }
              }
              (\{\tbo*(v_{i} \bullet_i \cdots \bullet_2 v_{1})
              * (v_{i+1},\dots,v_{n-1})\},u+v_n)\\
    &+ (\{\tbo*(v_{1},\dots,v_n)\},u) 
     + (\{\tbo*(v_{1},\dots,v_{n-1})\},u+v_n).
    \end{split}
   \end{equation}
  
  The sum of the fourth and sixth terms 
  \begin{align*} 
    &- \sum_{i=1}^{n-1} 
        (-1)^{i-1} 
         \sum_{
          \substack{
                    \bullet_k=,\,\rm{or}\, + \\
                    2\le k \le i
                   }
              }
              (\{\tbo*(v_{i} \bullet_i \cdots \bullet_2 v_{1})
              * (v_{i+1},\dots,v_{n-1},v_n)\},u) \\
    &+ (\{\tbo*(v_{1},\dots,v_n)\},u).
  \end{align*}
  is calculated to be
  \begin{equation}
    (\{\tbo*A\},u),
  \end{equation}
  where
  \begin{align*}
    A =  (v_{1},\dots,v_r) 
         + \sum_{i=1}^{n-1} 
              (-1)^{i} \sum_{
                           \substack{
                                     \bullet_k=,\,\rm{or}\, + \\
                                     2\le k \le i
                                    }
                              }
                              (v_{i} \bullet_i \cdots \bullet_2 v_{1})
                              * (v_{i+1},\dots,v_n). 
  \end{align*}
  By using Corollary 1.9 with $u_i=v_{i}$ ($i=1,\dots,n$), we get 
  \[
      (v_{1},v_{2},\dots,v_n) 
      + \sum_{i=1}^{n}
         (-1)^{i}
          \sum_{
           \substack{
                     \bullet_k=,\rm{or}\, + \\
                     2\le k \le i
                    }
               }
               (v_{i}\bullet_i\cdots \bullet_2 v_{1})
               * (v_{i+1},\dots,v_n)
      = 0.
    \]
  Hence we have
  \begin{align*}
    &A + (-1)^{n} \sum_{
                   \substack{
                             \bullet_k=,\,\rm{or}\, + \\
                             2\le k \le n
                            }
                       }
                       (v_{n} \bullet_{n} \cdots \bullet_2 v_{1})\\
    &=(v_{1},v_{2},\dots,v_n) 
       + \sum_{i=1}^{n}
          (-1)^{i}
           \sum_{
            \substack{
                      \bullet_k=,\rm{or}\, + \\
                      2\le k \le i
                     }
                }
                (v_{i}\bullet_i\cdots \bullet_2 v_{1})
                * (v_{i+1},\dots,v_n)\\
    &= 0.
  \end{align*}
  In other words, we get 
  \begin{equation}
    A = (-1)^{n-1} \sum_{
                    \substack{
                              \bullet_k=,\,\rm{or}\, + \\
                              2\le k \le n
                             }
                        }
                        (v_{n} \bullet_{n} \cdots \bullet_2 v_{1}).
  \end{equation}
  Similarly the sum of the fifth and seventh terms
  \begin{align*}
    &- \sum_{i=1}^{n-1} 
        (-1)^{i-1}
         \sum_{
          \substack{
                    \bullet_k=,\,\rm{or}\, + \\
                    2\le k \le i
                   }
              }
              (\{\tbo*(v_{i} \bullet_i \cdots \bullet_2 v_{1})
              * (v_{i+1},\dots,v_{n-1})\},u+v_n)\\
    &+ (\{\tbo*(v_{1},\dots,v_{n-1})\},u + v_n).
  \end{align*}
  is calculated to be
  \begin{equation}
    (\{\tbo*A'\},u+v_n),
  \end{equation}
  where
  \begin{align*}
    A' =  (v_{1},\dots,v_{n-1}) 
          + \sum_{i=1}^{n-1} 
             (-1)^{i} 
              \sum_{
               \substack{
                         \bullet_k=,\,\rm{or}\, + \\
                         2\le k \le i
                        }
                   }
                   (v_{i} \bullet_i \cdots \bullet_2 v_{1})
                   * (v_{i+1},\dots,v_{n-1}). 
  \end{align*}
  By using Corollary 1.9 with $u_i=v_{i}$ ($i=1,\dots,n-1$), we get 
  \[
      (v_{1},v_{2},\dots,v_{n-1}) 
      + \sum_{i=1}^{n-1}
         (-1)^{i}
          \sum_{
           \substack{
                     \bullet_k=,\rm{or}\, + \\
                     2\le k \le i
                    }
               }
               (v_{i}\bullet_i\cdots \bullet_2 v_{1})
               * (v_{i+1},\dots,v_{n-1})
      = 0.
    \]
  In other words, we get 
  \begin{equation}
    A' = 0.
  \end{equation}
  By (1.3), (1.4), (1.5), and (1.6), we obtain the equation 
  (\ref{eqn:general algebraic relation among words}).
\end{proof}

Let $r\in\N_{\ge2}$ and $j\in \{1,2,\dots,r-1 \}$.
Applying Proposition \ref{prop:algebraic relation among words} to $n=r-j$, $u=s_j$, $v_i=s_{j+i}$ ($i=1,2,\dots,r-j$)
yields the following. 

\begin{cor}\label{cor:the special case of the algebraic relation among words}
  For $\tbo\in S_{\N}^{\bullet}$, $j\in \{1,2,\dots,r-1 \}$ we have 
  \begin{equation}
   \label{the special case of the algebraic relation among words}
   \begin{split}
    &(\tbo,s_j) * (s_{j+i},\dots,s_r) \\
    &=  (\tbo,s_j,s_{j+i},\dots,s_r) 
     + (\tbo,s_j+s_{j+i},\dots,s_r) \\
    &\hspace{3mm}
     + \sum_{i=1}^{r-j} 
        (-1)^{i-1} 
         \sum_{
          \substack{
                    \bullet_k=,\,\rm{or}\, + \\
                    2\le k \le i
                   }
              }
              (\{\tbo*(s_{j+i} \bullet_i \cdots \bullet_2 s_{j+1})\},s_j)
              * (s_{j+i+1},\dots,s_r).
   \end{split}
  \end{equation}
  Here the term of $i=1$ in the summation is 
  \[
    (\{\tbo*(s_{j+1})\},s_j)*(s_{j+2},\dots,s_r).
  \]
\end{cor}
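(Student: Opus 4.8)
The statement is just a specialization of Proposition~\ref{prop:algebraic relation among words}, so the proof is essentially a matter of substituting the right parameters and checking that nothing degenerates. First I would set $n := r-j$ in Proposition~\ref{prop:algebraic relation among words}; since $j \in \{1,2,\dots,r-1\}$ we have $n = r-j \ge 1$, so the hypothesis $n \in \N$ is satisfied. Then I would take the word $\tbo$ of the Corollary to be the same $\tbo$, put $u := s_j \in S_{\N}$, and put $v_i := s_{j+i} \in S_{\N}$ for $i = 1,2,\dots,n = r-j$. Note that each $v_i$ is a legitimate element of $S_{\N}$ because $j+i$ runs over $j+1, j+2, \dots, r$, all of which are positive integers, so $s_{j+i}$ is one of the generators in $S$. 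With these choices the tuple $(v_1,\dots,v_n)$ becomes $(s_{j+1},s_{j+2},\dots,s_r)$.

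Next I would simply rewrite the conclusion (\ref{eqn:general algebraic relation among words}) of Proposition~\ref{prop:algebraic relation among words} under this substitution. The left-hand side $(\tbo,u)*(v_1,\dots,v_n)$ becomes $(\tbo,s_j)*(s_{j+1},\dots,s_r)$; the first two terms on the right become $(\tbo,s_j,s_{j+1},\dots,s_r)$ and $(\tbo,s_j+s_{j+1},\dots,s_r)$; and in the sum $\sum_{i=1}^{n}$ the combination $v_i \bullet_i \cdots \bullet_2 v_1$ becomes $s_{j+i} \bullet_i \cdots \bullet_2 s_{j+1}$, while the trailing word $(v_{i+1},\dots,v_n)$ becomes $(s_{j+i+1},\dots,s_r)$. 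This is exactly the displayed equation (\ref{the special case of the algebraic relation among words}), and the $i=1$ term of the sum, which Proposition~\ref{prop:algebraic relation among words} records as $(\{\tbo*(v_1)\},u)*(v_2,\dots,v_n)$, translates into $(\{\tbo*(s_{j+1})\},s_j)*(s_{j+2},\dots,s_r)$, matching the remark in the Corollary.

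There is essentially no obstacle here; the only thing to be a little careful about is indexing conventions in the iterated products. In the sum I am writing $i$ both as the outer summation index (inherited from the Proposition, where it ranged over $1,\dots,n$) and implicitly reusing the letter that appears in the Corollary's statement; since after the substitution $n = r-j$, the outer sum runs $i=1,\dots,r-j$, which is precisely the range printed in (\ref{the special case of the algebraic relation among words}). One should also double-check that the notation $s_{j+i} \bullet_i \cdots \bullet_2 s_{j+1}$ is the image of $v_i \bullet_i \cdots \bullet_2 v_1$ under $v_m \mapsto s_{j+m}$, reading the factors in the order $v_i, v_{i-1}, \dots, v_1$, i.e. $s_{j+i}, s_{j+i-1},\dots,s_{j+1}$, which it is. Hence the Corollary follows immediately.

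\begin{proof}
  Apply Proposition~\ref{prop:algebraic relation among words} with $n = r-j$ (which is a positive integer since $j \le r-1$), with the same word $\tbo$, with $u = s_j$, and with $v_i = s_{j+i}$ for $i = 1,\dots,r-j$; all of $s_j, s_{j+1},\dots,s_r$ lie in $S_{\N}$. Under this substitution $(v_1,\dots,v_n) = (s_{j+1},\dots,s_r)$, the expression $v_i \bullet_i \cdots \bullet_2 v_1$ becomes $s_{j+i}\bullet_i\cdots\bullet_2 s_{j+1}$, and $(v_{i+1},\dots,v_n)$ becomes $(s_{j+i+1},\dots,s_r)$. Substituting these into (\ref{eqn:general algebraic relation among words}) yields exactly (\ref{the special case of the algebraic relation among words}), with the $i=1$ term of the sum equal to $(\{\tbo*(s_{j+1})\},s_j)*(s_{j+2},\dots,s_r)$ as claimed.
\end{proof}
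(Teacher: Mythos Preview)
Your proposal is correct and follows exactly the same approach as the paper: the paper simply states that the corollary is obtained by applying Proposition~\ref{prop:algebraic relation among words} with $n=r-j$, $u=s_j$, and $v_i=s_{j+i}$ for $i=1,\dots,r-j$, which is precisely the substitution you carry out and verify in detail.
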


\section{Main results}
We extend the recurrence relation of 
Theorem (\ref{original recurrence relation of MZF in introduction}) 
and give several examples of the special values of the MZFs at regular integers.

\subsection{The recurrence relation of multiple zeta functions}
In this subsection, 
we give a proof of the extended recurrence relation of the MZF 
in Theorem \ref{thm:the extended recurrence relation of MZF}.

We rewrite 
(\ref{eqn:general algebraic relation among words}) 
as the relation among MZFs and compute both sides by using the original recurrence relation 
(\ref{original recurrence relation of MZF in introduction}).

\begin{prop}\label{prop:the harmonic relation among MZFs}
  Let
  $n\in\N$, $u$, $v_1$, $v_2$, $\dots$, $v_n\in S_{\N}$, 
  $\tbo\in S_{\N}^{\bullet}$.
  we have 
  \begin{equation}
   \label{eqn:general harmonic rel of mzf}
   \begin{split}
    &\zeta(\tbo,u) \zeta(v_1,\dots,v_n) \\
    &= \zeta(\tbo,u,v_1,\dots,v_n) 
      + \zeta(\tbo,u+v_1,\dots,v_n) \\
    &\hspace{3mm}
      + \sum_{i=1}^{n} 
         (-1)^{i-1} 
          \sum_{
           \substack{
                     \bullet_k=,\,\rm{or}\, + \\
                     2\le k \le i
                    }
               }
               \zeta^*(\{\tbo*(v_i \bullet_i \cdots \bullet_2 v_1)\},u)   
                \zeta(v_{i+1},\dots,v_n).
   \end{split}
  \end{equation}
  Here the term of $i=1$ in the summation means 
  \[
    \zeta^*(\{\tbo*(v_1)\},u) \zeta(v_2,\dots,v_n).
  \]
\end{prop}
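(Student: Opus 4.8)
The plan is to obtain the identity in $(\ref{eqn:general harmonic rel of mzf})$ simply by applying the $\Q$-algebra homomorphism $\zeta^*:(\mch,*)\to\mcz$ of Definition \ref{def:Q-linear map from Hopf algebra to algebra generated by MZFs} to the word identity $(\ref{eqn:general algebraic relation among words})$ established in Proposition \ref{prop:algebraic relation among words}. Since $\zeta^*$ is a homomorphism for the harmonic product, it sends the left-hand side $(\tbo,u)*(v_1,\dots,v_n)$ of $(\ref{eqn:general algebraic relation among words})$ to $\zeta^*(\tbo,u)\,\zeta^*(v_1,\dots,v_n)$, which is precisely $\zeta(\tbo,u)\,\zeta(v_1,\dots,v_n)$ in the notation of the Proposition. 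First I would spell out this reduction: write $\tbo=(u_1,\dots,u_k)$, note $\zeta^*(\tbo,u)=\zeta_{k+1}(u_1,\dots,u_k,u)$ and $\zeta^*(v_1,\dots,v_n)=\zeta_n(v_1,\dots,v_n)$, so the product of the two meromorphic functions on the left equals $\zeta^*$ of the $*$-product of the two words.

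Next I would apply $\zeta^*$ term by term to the right-hand side of $(\ref{eqn:general algebraic relation among words})$, using $\Q$-linearity. The first two words $(\tbo,u,v_1,\dots,v_n)$ and $(\tbo,u+v_1,\dots,v_n)$ map to $\zeta(\tbo,u,v_1,\dots,v_n)$ and $\zeta(\tbo,u+v_1,\dots,v_n)$. For the double sum, the summand is a $*$-product $(\{\tbo*(v_i\bullet_i\cdots\bullet_2 v_1)\},u)*(v_{i+1},\dots,v_n)$ of two elements of $\mch$; again because $\zeta^*$ is a homomorphism this maps to $\zeta^*(\{\tbo*(v_i\bullet_i\cdots\bullet_2 v_1)\},u)\cdot\zeta^*(v_{i+1},\dots,v_n)$, and the second factor is $\zeta(v_{i+1},\dots,v_n)$ by definition. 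Here one should be careful that $\{\tbo*(v_i\bullet_i\cdots\bullet_2 v_1)\}$ is in general a $\Q$-linear combination of words rather than a single word, so $\zeta^*$ of that $\mch$-element is interpreted by $\Q$-linearity (this is the reason the statement keeps the notation $\zeta^*(\{\cdots\},u)$ rather than a bare $\zeta_{\bullet}$); I would remark on this explicitly. The special case $i=1$, where the inner sum over $\bullet_k$ is empty, matches the stated convention $\zeta^*(\{\tbo*(v_1)\},u)\,\zeta(v_2,\dots,v_n)$ for the same reason.

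There is essentially no obstacle: the content is entirely in Proposition \ref{prop:algebraic relation among words} and in the fact, recorded after Definition \ref{def:Q-linear map from Hopf algebra to algebra generated by MZFs}, that $\zeta^*$ is an algebra homomorphism from $(\mch,*)$ to $\mcz$. The only point requiring a line of care is the bookkeeping identification of $\zeta^*$ of a pair $(\omega',u)$ (with $\omega'$ a linear combination of words) with the corresponding linear combination of MZF values appearing in the Proposition — i.e. that concatenating the extra letter $u$ on the right is compatible with $\zeta^*$ — which is immediate from the definition $\zeta^*(u_1,\dots,u_k)=\zeta_k(u_1,\dots,u_k)$ applied to each word in the combination. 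So the proof is just: apply $\zeta^*$ to $(\ref{eqn:general algebraic relation among words})$, use multiplicativity on both sides, and unwind the notation. If any subtlety arises it will be purely notational, around keeping the arguments $s_i$ of $\mcz_r$ aligned with the abstract letters $s_i\in S$; I would dispatch that by invoking Definition \ref{def:the C-algebra generated by MZFs} and the natural inclusions $\mcz_r\subset\mcz_{r+1}$.
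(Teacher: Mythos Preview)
Your proposal is correct and follows exactly the paper's own proof: the paper simply states that the claim is obtained by mapping both sides of (\ref{eqn:general algebraic relation among words}) using the algebra homomorphism $\zeta^*$ of Definition~\ref{def:Q-linear map from Hopf algebra to algebra generated by MZFs}. Your write-up merely unpacks this one line in greater notational detail.
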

%
\begin{proof}
  We obtain the claim by mapping both sides of (1.1) using $\zeta^*$ in 
  Definition \ref{def:Q-linear map from Hopf algebra to algebra generated by MZFs}.
\end{proof}

Note that $\zeta^*(\{\tbo*(v_i \bullet_i \cdots \bullet_2 v_1)\},u)$ 
is a rational linear combination of MZFs and 
(\ref{eqn:general harmonic rel of mzf}) is the relation among meromorphic functions.
The following corollary calculates the image of the equation 
(\ref{def:Q-linear map from Hopf algebra to algebra generated by MZFs})
under the map $\zeta^*$.

\begin{cor}\label{cor:the special case of the harmonic relation among MZFs}
  For $(\tbo,u)\in S_{\N}^{\bullet}$, ($\tbo\in S_{\N}^{\bullet}$, $u\in S_{\N}$), $j\in \{1,2,\dots,r-1 \}$ we have 
  \begin{equation}
   \begin{split}
   \label{eqn:harmonic rel of mzf}
    &\zeta(\tbo,u,s_j) \zeta(s_{j+i},\dots,s_r) \\
    &=  \zeta(\tbo,u,s_j,s_{j+i},\dots,s_r) 
     + \zeta(\tbo,u,s_j+s_{j+i},\dots,s_r) \\
    &\hspace{3mm}
     + \sum_{i=1}^{r-j} 
        (-1)^{i-1} 
         \sum_{
          \substack{
                    \bullet_k=,\,\rm{or}\, + \\
                    2\le k \le i
                   }
              }
              \zeta^*(\{(\tbo,u)*(s_{j+i} \bullet_i \cdots \bullet_2 s_{j+1})\},s_j)
              * \zeta(s_{j+i+1},\dots,s_r).
   \end{split}
  \end{equation}
  Here the term of $i=1$ in the summation is 
  \[
    \zeta(\{(\tbo,u)*(s_{j+1})\},s_j)*\zeta(s_{j+2},\dots,s_r).
  \]
\end{cor}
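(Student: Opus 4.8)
The plan is to obtain this corollary with no new input, as the image under the map $\zeta^*$ of the purely algebraic word identity already established in Corollary~\ref{cor:the special case of the algebraic relation among words}; equivalently, it is the specialization of Proposition~\ref{prop:the harmonic relation among MZFs} obtained by renaming its leading word $\tbo$ to $(\tbo,u)$, its distinguished final letter $u$ to $s_j$, the length parameter $n$ to $r-j$, and the letters $v_i$ to $s_{j+i}$ for $i=1,\dots,r-j$. All the substance sits in Proposition~\ref{prop:algebraic relation among words} (through its corollary) together with the fact, recorded just after Definition~\ref{def:Q-linear map from Hopf algebra to algebra generated by MZFs}, that $\zeta^*\colon(\mch,*)\to\mcz$ is a homomorphism of $\Q$-algebras.

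Carrying this out, the first step is to note that $(\tbo,u)$ is again an element of $S_{\N}^{\bullet}$, so Corollary~\ref{cor:the special case of the algebraic relation among words} holds verbatim with $\tbo$ replaced by $(\tbo,u)$; since $((\tbo,u),s_j)=(\tbo,u,s_j)$, its left-hand side is $(\tbo,u,s_j)*(s_{j+1},\dots,s_r)$, an element of $(\mch,*)$. The second step is to apply $\zeta^*$ to both sides. Because $\zeta^*$ sends each word $(w_1,\dots,w_k)$ to $\zeta_k(w_1,\dots,w_k)$ and the harmonic product $*$ to the ordinary (pointwise) product in $\mcz$, the left-hand side becomes $\zeta(\tbo,u,s_j)\,\zeta(s_{j+1},\dots,s_r)$; the two single-word summands on the right become $\zeta(\tbo,u,s_j,s_{j+1},\dots,s_r)$ and $\zeta(\tbo,u,s_j+s_{j+1},\dots,s_r)$; and in the double sum the factor $(\{(\tbo,u)*(s_{j+i}\bullet_i\cdots\bullet_2 s_{j+1})\},s_j)$ — an iterated harmonic product of words with the letter $s_j$ appended, hence a $\Q$-linear combination of words — is sent, by $\Q$-linearity of $\zeta^*$, precisely to the rational linear combination of MZFs written $\zeta^*(\{(\tbo,u)*(s_{j+i}\bullet_i\cdots\bullet_2 s_{j+1})\},s_j)$ in the statement, while the trailing $*(s_{j+i+1},\dots,s_r)$ turns into multiplication by $\zeta(s_{j+i+1},\dots,s_r)$. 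Isolating the $i=1$ term reproduces the parenthetical remark. Since the original identity holds in $(\mch,*)$ and $\zeta^*$ takes values in $\mcz$, a space of meromorphic functions on $\C^r$, the resulting equation is a genuine identity of meromorphic functions, which is the assertion.

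I do not expect a real obstacle here: once Proposition~\ref{prop:the harmonic relation among MZFs} (equivalently Corollary~\ref{cor:the special case of the algebraic relation among words}) is in hand, the argument is bookkeeping. The one point that needs care is notational: the symbol $*$ appearing in the last displayed term of the statement, and in the $i=1$ clause, must be read as ordinary multiplication in $\mcz$ rather than as the harmonic product on $\mch$, and $\zeta^*$ must be understood as its $\Q$-linear extension to the sums of words produced by harmonic products; with these conventions fixed, the term-by-term comparison above is immediate.
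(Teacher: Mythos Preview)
Your proposal is correct and follows essentially the same approach as the paper: the paper's own proof consists of a single sentence stating that the corollary is the special case of Proposition~\ref{prop:the harmonic relation among MZFs} with $\tbo\mapsto(\tbo,u)$, $u\mapsto s_j$, $n=r-j$, and $v_i=s_{j+i}$, which is exactly the specialization you identify. Your additional remarks about applying $\zeta^*$ to Corollary~\ref{cor:the special case of the algebraic relation among words} and about reading the residual ``$*$'' in the statement as ordinary multiplication in $\mcz$ are accurate elaborations, but not a different route.
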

%
\begin{proof}
  It is obtained as a special case of 
  Proposition \ref{prop:the harmonic relation among MZFs}
  with $\tbo=(\tbo,u)$, $u=s_{j}$, $n=r-j$ and $v_i=s_{j+i}$ ($i=1,\dots,r-j$).
\end{proof}

We prepare some notations 
which will be employed in a proof of the extended recurrence relation of the MZF.

\begin{nota}\label{notation:Q & Q',the product of two MZFs}
{\rm 
  Let $n\in\N$, $\tbo$, $(v_1,v_2,\dots,v_n)\in S_{\N}^{\bullet}$, $u\in S_{\N}$.
  We set 
  \begin{align*}
    Q(\tbo,u;v_{1},\dots,v_n) 
       :&=  \zeta(\tbo,u) 
            \zeta(v_1,v_2,\dots,v_n) \\
        &= \zeta(\tbo,u,v_1,\dots,v_n)
           + \zeta(\tbo,u+v_1,\dots,v_n) 
           + Q'(\tbo,u;v_{1},\dots,v_n)
  \end{align*}
  with
  \begin{align*}
   Q'(\tbo,u;v_{1},\dots,v_n)
       :&= \sum_{i=1}^{n} 
            (-1)^{i-1}
             \sum_{
              \substack{
                        \bullet_k=,\,\rm{or}\, + \\
                        2\le k \le i
                       }
                  }
                  \zeta^*(\{\tbo*(v_{i} \bullet_i \cdots \bullet_2 v_{1})\},u)   
                   \zeta(v_{i+1},\dots,v_n),
  \end{align*}
  where we put $Q(1;\tbo)=Q(\tbo;1)=\zeta(\tbo)$ and $Q'(1;\tbo)=Q'(\tbo;1)=0$.
  We note that the second equality holds by (\ref{eqn:harmonic rel of mzf}).
}
  \end{nota}
  The equation (\ref{eqn:harmonic rel of mzf}) is nothing but 
  \begin{equation}
   \label{modified harmonic relation among MZF}
   \begin{split}
    &\zeta(\tbo,u,s_j) \zeta(s_{j+i},\dots,s_r) \\
    &=  \zeta(\tbo,u,s_j,s_{j+i},\dots,s_r) 
     + \zeta(\tbo,u,s_j+s_{j+i},\dots,s_r) \\
    &\hspace{3mm}
     + Q'(\tbo,u,s_j;s_{j+i},\dots,s_r).
   \end{split}
  \end{equation}

Our strategy is to transform this equation (\ref{modified harmonic relation among MZF}) 
to an equivalent form (\ref{eqn:the extended recurrence relation of MZF with integral terms}), 
from which we will deduce the extended recurrence relation 
(Theorem \ref{thm:the extended recurrence relation of MZF}).

To start with, we review the original recurrence relation of the MZF.

\begin{thm}{\rm (cf. \cite[\S4, (4.4) and $\S$6]{M}).}\label{thm:the original recurrence relation of MZF}
  Let $\epsilon$ be a small positive real number and $M$, $n\in\N$.
  We have
  \begin{equation}
   \begin{split}
   \label{eqn:original rec rel}
    &\zeta_n(s_1,\dots,s_n) \\
    &= \frac{1}{s_n-1}\zeta_{n-1}(s_1,\dots,s_{n-1}+s_n-1)
       + \sum_{k=0}^{M-1}
          \binom{-s_n}{k}
           \zeta_{n-1}(s_1,\dots,s_{n-1}+s_n+k)\zeta(-k) \\
    &+ \frac{1}
            {2\pi i}
            \int_{(M-\epsilon)}
             \frac{\Gamma(s_n+z)\Gamma(-z)}
                  {\Gamma(s_n)}
                  \zeta_{n-1}(s_1,\dots,s_{n-1}+s_n+z)\zeta(-z)dz.
   \end{split}
  \end{equation}
  The symbol $(M-\epsilon)$ stands for the path of integration along the vertical line from 
  $M-\epsilon - i\infty$ to $M-\epsilon + i\infty$.
\end{thm}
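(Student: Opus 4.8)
The statement to prove is Theorem \ref{thm:the original recurrence relation of MZF}, the Mellin–Barnes integral representation of the Euler–Zagier zeta function.

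\medskip

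The plan is to derive this from the classical Mellin–Barnes formula
\[
  \frac{1}{(1+\lambda)^{s}}
  = \frac{1}{2\pi i}\int_{(c)}
     \frac{\Gamma(s+z)\Gamma(-z)}{\Gamma(s)}\,\lambda^{z}\,dz,
\]
valid for $\re(s)>0$, $|\arg\lambda|<\pi$, $\lambda\neq0$, and $-\re(s)<c<0$. First I would start from the absolutely convergent region for $\zeta_n$, write $n_n = n_{n-1} + m$ with $m\ge1$, so that
\[
  \frac{1}{n_n^{s_n}} = \frac{1}{n_{n-1}^{s_n}}\cdot\frac{1}{(1+m/n_{n-1})^{s_n}},
\]
and apply the Mellin–Barnes formula with $\lambda = m/n_{n-1}$. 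Interchanging the $z$-integral with the (absolutely convergent) sums over $0<n_1<\cdots<n_{n-1}$ and over $m\ge1$ produces
\[
  \zeta_n(s_1,\dots,s_n)
  = \frac{1}{2\pi i}\int_{(c)}
     \frac{\Gamma(s_n+z)\Gamma(-z)}{\Gamma(s_n)}\,
     \zeta_{n-1}(s_1,\dots,s_{n-1}+s_n+z)\,\zeta(-z)\,dz,
\]
first for $-\re(s_n)<c<0$ with the remaining $s_i$ large enough to guarantee convergence of the inner $(n-1)$-fold series at the shifted argument.

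\medskip

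Next I would move the contour from $\re(z)=c$ (just left of $0$) to $\re(z)=M-\epsilon$, picking up residues at the poles crossed. The integrand $\Gamma(-z)$ contributes simple poles at $z=0,1,2,\dots$; the factor $\zeta(-z)$ is entire (its trivial zeros are not relevant here, and its pole at $z=-1$ lies to the left), while $\zeta_{n-1}(s_1,\dots,s_{n-1}+s_n+z)$, viewed as a meromorphic function of $z$ by Zhao / Akiyama–Egami–Tanigawa, contributes the pole at $s_{n-1}+s_n+z=1$, i.e. $z=1-s_{n-1}-s_n$. Evaluating the residue at $z=k$ ($0\le k\le M-1$) of $\Gamma(-z)$, which is $(-1)^{k+1}/k!$, and combining with the binomial coefficient $\binom{-s_n}{k}=(-1)^k\Gamma(s_n+k)/(k!\,\Gamma(s_n))$ — here one should be slightly careful with signs and with the $\frac{1}{2\pi i}$ orientation when the contour moves rightward — gives the sum
\[
  \sum_{k=0}^{M-1}\binom{-s_n}{k}\zeta_{n-1}(s_1,\dots,s_{n-1}+s_n+k)\,\zeta(-k);
\]
the residue from the pole of $\zeta_{n-1}$ at $z=1-s_{n-1}-s_n$ yields the term $\frac{1}{s_n-1}\zeta_{n-1}(s_1,\dots,s_{n-1}+s_n-1)$, using that $\zeta_{n-1}$ has residue $1$ there in that variable. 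Finally, analytic continuation in all the $s_i$ extends the identity from the initial convergence region to all of $\C^n$ away from the singular locus, since every term is meromorphic and the shifted integral $\int_{(M-\epsilon)}$ converges locally uniformly thanks to the exponential decay of $\Gamma(s_n+z)\Gamma(-z)$ along vertical lines (Stirling) beating the polynomial growth of $\zeta_{n-1}\zeta$.

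\medskip

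The main obstacle is the contour-shifting bookkeeping: one must check that no pole of $\zeta_{n-1}(s_1,\dots,s_{n-1}+s_n+z)$ other than the one at $z=1-s_{n-1}-s_n$ is crossed while passing from $c$ to $M-\epsilon$ — equivalently, that $(s_1,\dots,s_n)$ is chosen generically (outside the singular locus \eqref{eqn:pole condition of mzf in introduction}) so that the deeper poles of $\zeta_{n-1}$ stay off the strip $c\le\re(z)\le M-\epsilon$ — and that the horizontal segments at $\im(z)=\pm T$ vanish as $T\to\infty$, again by Stirling. Once that is in place the identity is an equality of meromorphic functions and holds everywhere by uniqueness of analytic continuation; the reference to \cite[\S4, (4.4) and \S6]{M} supplies the detailed estimates, so here I would only indicate the structure of the argument rather than reproduce them.
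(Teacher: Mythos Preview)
The paper does not give its own proof of this theorem; it is quoted from Matsumoto \cite{M}. Your sketch follows Matsumoto's Mellin--Barnes method, and the overall architecture (integral representation, then contour shift, then analytic continuation) is correct, but there is a concrete error in the residue bookkeeping that makes the argument fail as written.

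The term $\dfrac{1}{s_n-1}\,\zeta_{n-1}(s_1,\dots,s_{n-1}+s_n-1)$ does \emph{not} come from a pole of $\zeta_{n-1}(s_1,\dots,s_{n-1}+s_n+z)$; it comes from the simple pole of $\zeta(-z)$ at $z=-1$. For the interchange of the sum over $m\ge1$ with the $z$-integral to be legitimate you need $\sum_{m\ge1}m^{z}$ to converge absolutely, i.e.\ $\re(z)<-1$; hence the initial contour must lie at some $c$ with $-\re(s_n)<c<-1$ (so $\re(s_n)>1$). Shifting from such a $c$ to $M-\epsilon$ then \emph{does} cross $z=-1$, and the residue of the integrand there is
\[
  \frac{\Gamma(s_n-1)\,\Gamma(1)}{\Gamma(s_n)}\,\zeta_{n-1}(s_1,\dots,s_{n-1}+s_n-1)\cdot(-1)
  \;=\; -\frac{1}{s_n-1}\,\zeta_{n-1}(s_1,\dots,s_{n-1}+s_n-1),
\]
which (with the orientation sign from the rightward shift) yields exactly the first term of \eqref{eqn:original rec rel}. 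By contrast, the point $z=1-s_{n-1}-s_n$ that you invoke lies far to the left of the initial contour when the $s_i$ are in the region of absolute convergence, so it is never crossed; and even if it were, the residue of $\zeta_{n-1}(s_1,\dots,s_{n-2},s_{n-1}+s_n+z)$ at that point is $\zeta_{n-2}(s_1,\dots,s_{n-2})$, not $\zeta_{n-1}(s_1,\dots,s_{n-1}+s_n-1)$, so it could not produce the required term. Your parenthetical ``its pole at $z=-1$ lies to the left'' is thus precisely the slip: with the contour placed where it must be for the derivation to start, that pole lies to the \emph{right} and is the true source of the $\frac{1}{s_n-1}$ contribution. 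Once this is corrected, the rest of your outline (residues of $\Gamma(-z)$ at $z=0,\dots,M-1$ giving the binomial sum, Stirling for the tails) goes through.
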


  Let $\epsilon$ be a positive real number, $M\in\N$ and $v\in S_{\N}$. 
  We set 
  \begin{align*}
    R_{M-\epsilon}(\tbo,u,v) 
      &:= \frac{1}
              {2\pi i}
              \int_{(M-\epsilon)}
              \frac{\Gamma(v+z)\Gamma(-z)}
                   {\Gamma(v)}
                   \zeta(\tbo,u+v+z)\zeta(-z)dz.
  \end{align*}
  Then (\ref{eqn:original rec rel}) is represented as
  \begin{equation}
   \begin{split}
   \label{modified version of original rec rel with integral term}
    \zeta_n(s_1,\dots,s_n) 
    &= \frac{1}{s_n-1}\zeta_{n-1}(s_1,\dots,s_{n-1}+s_n-1) \\
    &\hspace{3mm}
     + \sum_{k=0}^{M-1}
         \binom{-s_n}{k}
          \zeta_{n-1}(s_1,\dots,s_{n-1}+s_n+k)\zeta(-k) \\
    &\hspace{3mm}+ R_{M-\epsilon}(s_1,\dots,s_{n-1},s_n).
   \end{split}
  \end{equation}

The following lemma is the calculation of the left-hand side of 
(\ref{modified harmonic relation among MZF}).

\begin{lem}\label{lem:the calculation of the LHS of the harmonic relation of MZFs}
  Let $r\in\N_{\ge 2}$, $j\in\{2,\dots,r-1\}$, 
  $\tbo\in S_{\N}^{\bullet}$ and 
  $u\in S_{\N}$.
  The following holds;
  \begin{equation}
   \begin{split}
    \label{calculation of the LHS of harmonic rel of mzf}
     \zeta(\tbo,u,s_j) \zeta(s_{j+1},\dots,s_r)
      &= \frac{1}{s_j-1} Q(\tbo,u+s_j-1;s_{j+1},\dots,s_r) \\
      &\hspace{5mm}
       + \sum_{k=0}^{M-1}
          \binom{-s_j}{k}
           \zeta(-k)Q(\tbo,u+s_j+k;s_{j+1},\dots,s_r) \\
      &\hspace{5mm}
       + \, R_{M-\epsilon}(\tbo,u,s_j)\zeta(s_{j+1},\dots,s_r). 
   \end{split}
  \end{equation}
\end{lem}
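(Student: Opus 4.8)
The plan is to apply the original recurrence relation of the MZF (in the form \eqref{modified version of original rec rel with integral term}) to the factor $\zeta(\tbo,u,s_j)$ appearing on the left-hand side of \eqref{calculation of the LHS of harmonic rel of mzf}, treating $s_j$ as the ``last variable'' of the $(l(\tbo)+2)$-fold zeta function $\zeta(\tbo,u,s_j)$. Concretely, writing $\tbo=(w_1,\dots,w_m)$ so that $\zeta(\tbo,u,s_j)=\zeta_{m+2}(w_1,\dots,w_m,u,s_j)$, an application of \eqref{modified version of original rec rel with integral term} with $n=m+2$ gives
\[
  \zeta(\tbo,u,s_j)
  = \frac{1}{s_j-1}\zeta(\tbo,u+s_j-1)
   + \sum_{k=0}^{M-1}\binom{-s_j}{k}\zeta(\tbo,u+s_j+k)\zeta(-k)
   + R_{M-\epsilon}(\tbo,u,s_j).
\]

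First I would substitute this expansion into the left-hand side of \eqref{calculation of the LHS of harmonic rel of mzf}, i.e. multiply it by $\zeta(s_{j+1},\dots,s_r)$ and distribute. The first term becomes $\frac{1}{s_j-1}\,\zeta(\tbo,u+s_j-1)\,\zeta(s_{j+1},\dots,s_r)$, which is exactly $\frac{1}{s_j-1}Q(\tbo,u+s_j-1;s_{j+1},\dots,s_r)$ by the definition of $Q$ in Notation \ref{notation:Q & Q',the product of two MZFs} (taking $v_i=s_{j+i}$). Each summand of the finite sum becomes $\binom{-s_j}{k}\zeta(-k)\,\zeta(\tbo,u+s_j+k)\,\zeta(s_{j+1},\dots,s_r)=\binom{-s_j}{k}\zeta(-k)\,Q(\tbo,u+s_j+k;s_{j+1},\dots,s_r)$, again directly by the definition of $Q$. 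The last term is $R_{M-\epsilon}(\tbo,u,s_j)\,\zeta(s_{j+1},\dots,s_r)$, which is already in the desired shape. Collecting these three pieces yields \eqref{calculation of the LHS of harmonic rel of mzf} verbatim.

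The only point requiring care — and the main (mild) obstacle — is the legitimacy of applying the analytic recurrence \eqref{modified version of original rec rel with integral term} and of the product-to-sum identity defining $Q$ as an identity of meromorphic functions on all of $\C^r$: a priori each holds on a region of absolute convergence, but since both sides of everything in sight are meromorphic functions on $\C^r$ (the MZFs by \cite{z1},\cite{AET}, the product $Q$ lying in $\mcz_r$, and $R_{M-\epsilon}$ being given by an absolutely convergent contour integral that is holomorphic where the path does not meet the poles), the identity propagates to $\C^r$ by the identity theorem. I would note this once and otherwise treat all manipulations as formal substitutions. The hypothesis $j\in\{2,\dots,r-1\}$ guarantees $\tbo$ (hence $u$) sits to the left of $s_j$ and that $s_{j+1},\dots,s_r$ is a nonempty tail, so the bookkeeping in Notation \ref{notation:Q & Q',the product of two MZFs} applies without the degenerate edge cases; no further subtlety arises.
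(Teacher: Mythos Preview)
Your proof is correct and follows exactly the paper's own argument: apply the recurrence \eqref{modified version of original rec rel with integral term} to the last variable of $\zeta(\tbo,u,s_j)$, multiply through by $\zeta(s_{j+1},\dots,s_r)$, and recognize each resulting product as $Q(\tbo,\cdot\,;s_{j+1},\dots,s_r)$ via Notation~\ref{notation:Q & Q',the product of two MZFs}. Your remark on analytic continuation is a valid point of rigor that the paper leaves implicit.
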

%
\begin{proof}
  By applying (\ref{eqn:original rec rel}) to $\zeta(\tbo,u,s_j)$ in the left-hand side of 
  (\ref{calculation of the LHS of harmonic rel of mzf}), 
  we obtain
  \begin{align*}
    &\zeta(\tbo,u,s_j) \zeta(s_{j+1},\dots,s_r) \\
    &= \frac{1}{s_j-1}\zeta(\tbo,u+s_j-1)\zeta(s_{j+1},\dots,s_r) \\
    &\hspace{5mm} 
     + \sum_{k=0}^{M-1}
        \binom{-s_j}{k}
         \zeta(\tbo,u+s_j+k)\zeta(s_{j+1},\dots,s_r)\zeta(-k) \\
    &\hspace{5mm} 
     +\, R_{M-\epsilon}(\tbo,u,s_j)\zeta(s_{j+1},\dots,s_r). 
    \intertext{
      Notation \ref{notation:Q & Q',the product of two MZFs} allows us to rewrite it as follows;
              }
    &= \frac{1}{s_j-1} Q(\tbo,u+s_j-1;s_{j+1},\dots,s_r) \\
    &\hspace{5mm} 
     + \sum_{k=0}^{M-1}
        \binom{-s_j}{k}
         \zeta(-k)Q(\tbo,u+s_j+k;s_{j+1},\dots,s_r) \\
    &\hspace{5mm}
     + \, R_{M-\epsilon}(\tbo,u,s_j)\zeta(s_{j+1},\dots,s_r). 
  \end{align*}
  Hence, we obtain the claim.
\end{proof}

To simplify calculation of the right-hand side of (\ref{modified harmonic relation among MZF}), 
we prepare Lemma \ref{lem:dominodaoshi for MZF} and
Notation \ref{notation:S_i for the calculation of RHS of the harmonic relation of MZFs} below.

\begin{lem}\label{lem:dominodaoshi for MZF}
  Let $\tbo\in S_{\N}^{\bullet}$ and
  $u\in S_{\N}$.
  For $(v_1,\dots,v_n)\in S_{\N}^{\bullet}$ and $n\in\N$, 
  we have
  \begin{equation}
   \label{sumofQ}
    \begin{split}
      -\zeta(\tbo,v_1+u,v_2,\dots,v_n)
      &+ \sum_{i=1}^{n}
          (-1)^{i-1} Q(\tbo,u+v_i+\cdots+v_1;v_{i+1},\dots,v_n) \\
      &+ \sum_{i=1}^{n-1}
          (-1)^{i}Q'(\tbo,u+v_i+\cdots+v_1;v_{i+1},\dots,v_n)=0,
    \end{split}
  \end{equation}
  where the empty summation is interpreted as 0.
\end{lem}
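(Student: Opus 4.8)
The plan is to prove the identity \eqref{sumofQ} by induction on $n$, peeling off the last letter $v_n$ at each step and using the associativity/definition of the harmonic product together with the defining relation of $Q$ and $Q'$ in Notation \ref{notation:Q & Q',the product of two MZFs}. For $n=1$ the left-hand side is
\[
  -\zeta(\tbo,v_1+u) + Q(\tbo,u+v_1;\emptyset),
\]
and since $Q(\gamma;1)=\zeta(\gamma)$ with the empty tail this is $-\zeta(\tbo,v_1+u)+\zeta(\tbo,v_1+u)=0$; the last (empty) $Q'$-sum contributes nothing. So the base case is immediate.

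For the inductive step, I would start from $Q(\tbo,u;v_1,\dots,v_n)=\zeta(\tbo,u)\zeta(v_1,\dots,v_n)$ and instead exploit that each term $Q(\tbo,u+v_i+\cdots+v_1;v_{i+1},\dots,v_n)$ is itself a product $\zeta(\tbo,u+v_i+\cdots+v_1)\zeta(v_{i+1},\dots,v_n)$. The key manipulation is the telescoping coming from the harmonic-product recursion applied to the factor $\zeta(v_{i+1},\dots,v_n)$, or equivalently to the last letter: writing each $Q$ in terms of the $n$-vs-$(n-1)$ split via \eqref{eqn:harmonic rel of mzf} / the defining identity $Q = \zeta(\cdots,v_{i+1},\dots,v_n)+\zeta(\cdots,v_{i+1}+\cdots)+Q'$ allows one to match the $Q$-sum against the $Q'$-sum with the indices shifted by one. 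Concretely, I expect that after substituting the defining relation for each $Q(\tbo,u+v_i+\cdots+v_1;v_{i+1},\dots,v_n)$ into the first sum, the ``leading term'' $\zeta(\tbo,u+v_1+\cdots+v_i,v_{i+1},\dots,v_n)$ telescopes against the corresponding term with $i$ replaced by $i+1$, the ``$v_{i+1}$-absorbed'' term $\zeta(\tbo,u+v_1+\cdots+v_{i+1},v_{i+2},\dots,v_n)$ pairs with the next index as well, and the residual $Q'(\tbo,u+v_i+\cdots+v_1;v_{i+1},\dots,v_n)$ exactly cancels the second sum, leaving only the boundary term $-\zeta(\tbo,v_1+u,v_2,\dots,v_n)$ (the $i=0$ boundary of the telescope) and a vanishing top boundary $i=n$. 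That is precisely the asserted identity.

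An alternative, and perhaps cleaner, route is to apply the induction hypothesis to the tail word $(v_2,\dots,v_n)$ with the modified first letter $u+v_1$ playing the role of $u$, i.e.\ use \eqref{sumofQ} for $n-1$ with $\tbo$ kept, $u \mapsto u+v_1$, and $(v_2,\dots,v_n)$ in place of $(v_1,\dots,v_n)$; this reproduces all terms of the sums for $i\ge 2$, and one is left to check by hand that the $i=1$ terms $Q(\tbo,u+v_1;v_2,\dots,v_n)$ and $-Q'(\tbo,u+v_1;v_2,\dots,v_n)$ combine with $-\zeta(\tbo,v_1+u,v_2,\dots,v_n)$ to give zero — but that is exactly $-\zeta(\tbo,v_1+u,v_2,\dots,v_n)+\zeta(\tbo,v_1+u,v_2,\dots,v_n)$ after unwinding $Q - Q'$ via Notation \ref{notation:Q & Q',the product of two MZFs}, since $Q(\gamma;v_2,\dots,v_n)-Q'(\gamma;v_2,\dots,v_n)=\zeta(\gamma,v_2,\dots,v_n)+\zeta(\gamma\text{ with }v_2\text{ absorbed},\dots)$ and the induction on the tail already accounts for the absorbed term. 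I would present this second version, since it isolates the bookkeeping into a single clean cancellation.

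The main obstacle is purely combinatorial: keeping the index shifts in the two sums aligned so that the telescoping is transparent, and making sure the empty-summation and $Q(\cdot;1)=\zeta(\cdot)$ conventions are applied consistently at both ends of the range. There is no analytic content here — \eqref{sumofQ} is an identity of formal $\Q$-linear combinations of MZF symbols, equivalently an identity in $\mcz$ — so once the indexing is set up correctly the verification is a finite, mechanical cancellation with no convergence or meromorphic-continuation issues to worry about.
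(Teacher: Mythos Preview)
Your proposal is correct, and the core mechanism you identify---substituting the defining relation $Q-Q'=\zeta(\tbo,u+v_1+\cdots+v_i,v_{i+1},\dots,v_n)+\zeta(\tbo,u+v_1+\cdots+v_{i+1},v_{i+2},\dots,v_n)$ and then telescoping---is exactly what the paper does. The only difference is packaging: the paper does not use induction at all. It simply pairs the $i$-th $Q$-term with the $i$-th $Q'$-term for $i=1,\dots,n-1$, invokes the $Q-Q'$ identity once, and observes that the resulting alternating sum $\sum_{i=1}^{n-1}(-1)^{i-1}(A_i+A_{i+1})$ (with $A_i:=\zeta(\tbo,u+v_1+\cdots+v_i,v_{i+1},\dots,v_n)$) telescopes to $A_1$, which cancels the leading $-\zeta(\tbo,v_1+u,v_2,\dots,v_n)$; the leftover $(-1)^{n-1}Q_n=(-1)^{n-1}A_n$ is absorbed as the top boundary of the telescope. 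So your ``first approach'' is the paper's proof once you drop the inductive scaffolding; your ``second approach'' (induction via the tail with $u\mapsto u+v_1$) also works but is a detour, since the telescoping is already a one-shot finite cancellation.
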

%
\begin{proof}
  It is an immediate consequence from
  \begin{equation}
   \label{eqn:rel between Q and Q'}
   \begin{split}
    &Q(\tbo,u+v_i+\cdots+v_1;v_{i+1},\dots,v_n) - Q'(\tbo,u+v_i+\cdots+v_1;v_{i+1},\dots,v_n) \\
    &= \zeta(\tbo,u+v_i+\cdots+v_1,v_{i+1},\dots,v_n)
           + \zeta(\tbo,u+v_{i+1}+v_i+\cdots+v_1,v_{i+2},\dots,v_n)
   \end{split}
  \end{equation}
  and $Q(\tbo,u+v_n+\cdots+v_1;1)=\zeta(\tbo,u+v_n+\cdots+v_1)$.
\end{proof}

The following notations are for a preparation of calculation of the equation 
(\ref{eqn:calculation of the RHS of harmonic rel of mzf}) in 
Lemma \ref{lem:the calculation of the RHS of the harmonic relation of MZFs}.

\begin{nota}\label{notation:S_i for the calculation of RHS of the harmonic relation of MZFs}
 {\rm
  Let $r, k\in\N$, $j\in\{ 1,2,\dots,r-1 \}$, $\tbo\in S_{\N}^{\bullet}$ and $u\in S_{\N}$.
  
  \begin{align*}
    S_1(k) &= \sum_{i=1}^{r-j} (-1)^{i-1}\Big(
            \zeta(\tbo,u,s_{j+i}+\cdots+s_j+k)
             + \zeta^*(\{\tbo*(s_{j+i}+\cdots+s_{j+1})\},u+s_j+k) \\
             &\hspace{23mm}
              + \zeta(\tbo,u+s_{j+i}+\cdots+s_j+k)
                 \Big) \zeta(s_{j+i+1},\dots,s_r), \\
    S_2(k) &= \sum_{i=2}^{r-j} 
                (-1)^{i-1} 
            \sum_{l=2}^i
             \sum_{
              \substack{
                        \bullet_t=,\,\rm{or}\, + \\
                        l+1\le t \le i
                       }
                  }\zeta(s_{j+i+1},\dots,s_r)\\
               &\hspace{5.5mm} \times \Bigg(
                \zeta^*
                   (\{(\tbo,u)*(s_{j+i} \bullet_i \cdots \bullet_{l+1} s_{j+l})\},
                      s_{j+l-1}+\cdots+s_{j+1}+s_j+k) \\
                &\hspace{13mm}
                 +\zeta^*
                   (\{\tbo*(s_{j+i} \bullet_i \cdots \bullet_{l+1} s_{j+l},
                      s_{j+l-1}+\cdots+s_{j+1})\},u+s_j+k) \\
                &\hspace{13mm}
                 +\zeta^*
                   (\{\tbo*(s_{j+i} \bullet_i \cdots \bullet_{l+1} s_{j+l})\},
                      u+s_{j+l-1}+\cdots+s_{j+1}+s_j+k)
                \Bigg), \\
    S_3 &= \sum_{i=1}^{r-j}(-1)^{i-1}
            \Big(
               R_{M-\epsilon}(\tbo,u,s_{j+i}+\cdots+s_{j+1},s_j) \\
             &\hspace{23mm}
              + R_{M-\epsilon}(\{\tbo*(s_{j+i}+\cdots+s_{j+1})\},u,s_j) \\
             &\hspace{23mm}
              + R_{M-\epsilon}(\tbo,u+s_{j+i}+\cdots+s_{j+1},s_j)
              \Big)\zeta(s_{j+i+1},\dots,s_r), \\
    S_4 &= \sum_{i=2}^{r-j} 
                (-1)^{i-1} 
             \sum_{l=2}^i
              \sum_{
               \substack{
                         \bullet_k=,\,\rm{or}\, + \\
                         l+1\le k \le i
                        }
                   }
                   \zeta(s_{j+i+1},\dots,s_r) \\
                   &\hspace{5.5mm} \times 
                    \Big(R_{M-\epsilon}
                          (\{(\tbo,u)*(s_{j+i} \bullet_i \cdots \bullet_{l+1} s_{j+l})\},
                            s_{j+l-1}+\cdots+s_{j+1},s_j) \\
                   &\hspace{12mm}
                    + R_{M-\epsilon}
                       (\{\tbo*(s_{j+i} \bullet_i \cdots \bullet_{l+1} s_{j+l},
                           s_{j+l-1}+\cdots+s_{j+1})\},u,s_j) \\
                   &\hspace{12mm}
                    + R_{M-\epsilon}
                       (\{\tbo*(s_{j+i} \bullet_i \cdots \bullet_{l+1} s_{j+l})\},
                           u+s_{j+l-1}+\cdots+s_{j+1},s_j)
                           \Big) .
    \end{align*}
 }
\end{nota}

The following is on the last term of right-hand side of 
(\ref{modified harmonic relation among MZF}).

\begin{lem}\label{lem:the calculation of the RHS of the harmonic relation of MZFs}
  Let $r\in\N$, $j\in\{ 2,\dots,r-1 \}$, 
  $\tbo\in S_{\N}^{\bullet}$ and 
  $u\in S_{\N}$.
  The following holds;
  \begin{equation}
   \label{eqn:calculation of the RHS of harmonic rel of mzf}
    \begin{split}
      Q'(\tbo,u,s_j;s_{j+1},\dots,s_r) 
      &= \frac{1}{s_j-1}
          \zeta(\tbo,u,s_{j+1}+s_j-1,s_{j+2},\dots,s_r) \\
      &\hspace{5mm}
       + \frac{1}{s_j-1}
          \zeta(\tbo,u+s_{j+1}+s_{j}-1,s_{j+2},\dots,s_r) \\
      &\hspace{5mm}
       + \frac{1}{s_j-1}
          Q'(\tbo,u+s_j-1;s_{j+1},\dots,s_r) \\
      &\hspace{5mm}
       + \sum_{k=0}^{M-1}
          \binom{-s_j}{k}\zeta(-k)
           \Big( S_1(k)+S_2(k) \Big) 
       + S_3 + S_4.
    \end{split}
  \end{equation}
\end{lem}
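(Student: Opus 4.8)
The plan is to expand the quantity $Q'(\tbo,u,s_j;s_{j+1},\dots,s_r)$ directly from its definition in Notation \ref{notation:Q & Q',the product of two MZFs}, then inside each summand apply the original recurrence relation in the form (\ref{modified version of original rec rel with integral term}) to eliminate the last argument $s_j$, and finally reorganize the resulting multitude of terms into the five groups $\frac{1}{s_j-1}(\cdots)$, $S_1$, $S_2$, $S_3$, $S_4$. Concretely, by Corollary \ref{cor:the special case of the harmonic relation among MZFs} (equivalently, Corollary \ref{cor:the special case of the algebraic relation among words} pushed through $\zeta^*$),
\[
  Q'(\tbo,u,s_j;s_{j+1},\dots,s_r)
  = \sum_{i=1}^{r-j}(-1)^{i-1}
     \sum_{\substack{\bullet_k=,\,\text{or}\,+\\ 2\le k\le i}}
      \zeta^*(\{(\tbo,u)*(s_{j+i}\bullet_i\cdots\bullet_2 s_{j+1})\},s_j)\,
       \zeta(s_{j+i+1},\dots,s_r).
\]
First I would split off the $i=1$ term, which is $\zeta^*(\{(\tbo,u)*(s_{j+1})\},s_j)\,\zeta(s_{j+2},\dots,s_r)$; applying the harmonic product $(\tbo,u)*(s_{j+1})=(\tbo,u,s_{j+1})+(\tbo,u+s_{j+1})+(\{\tbo*(s_{j+1})\},u)$ produces exactly the three shapes of word that feed into $S_1(k)$, $S_3$ (their $i=1$ contributions), and the $\frac{1}{s_j-1}$ terms. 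For the general $i\ge 2$ term, expanding $(\tbo,u)*(s_{j+i}\bullet_i\cdots\bullet_2 s_{j+1})$ according to where the symbol $u$ lands relative to the collapsed block $s_{j+i}\bullet_i\cdots\bullet_2 s_{j+1}$ splits the word into the three families distinguished in the definitions of $S_1,S_2$ (and $S_3,S_4$): $u$ trailing after the whole block, $u$ absorbed into a partial sum of the block, or the block acting by harmonic product against $\tbo$ alone.

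The second stage is to apply the recurrence (\ref{modified version of original rec rel with integral term}) with $n$ equal to the appropriate length and $s_n$ the displayed final argument $s_j$, to every $\zeta^*(\dots,s_j)$ appearing above. Each application yields a $\frac{1}{s_j-1}$-term, a finite sum $\sum_{k=0}^{M-1}\binom{-s_j}{k}\zeta(-k)(\cdots)$, and an $R_{M-\epsilon}(\cdots,s_j)$-term. Collecting the $R$-terms according to the three word families gives precisely $S_3$ (from the $i=1$ piece together with the "no partial-sum" pieces) and $S_4$ (from the pieces where $u$ or the block contributes a partial sum, i.e.\ the inner $l$-summation), matching Notation \ref{notation:S_i for the calculation of RHS of the harmonic relation of MZFs} line by line. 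Similarly the finite $k$-sums assemble into $\sum_{k=0}^{M-1}\binom{-s_j}{k}\zeta(-k)(S_1(k)+S_2(k))$ once one recognizes, inside each bracket, the same product $\zeta(\cdots)\zeta(s_{j+i+1},\dots,s_r)$ that the definition of $S_1,S_2$ records (the $\zeta^*$-notation in $S_1,S_2$ already packages such products via Notation \ref{notation:Q & Q',the product of two MZFs}).

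The only genuinely delicate point is the $\frac{1}{s_j-1}$-terms: each of the many applications of (\ref{modified version of original rec rel with integral term}) contributes one, so one must check that after summing over $i$ and over the choices $\bullet_k$ they telescope down to just the three terms displayed in (\ref{eqn:calculation of the RHS of harmonic rel of mzf}), namely $\frac{1}{s_j-1}\big(\zeta(\tbo,u,s_{j+1}+s_j-1,s_{j+2},\dots,s_r)+\zeta(\tbo,u+s_{j+1}+s_j-1,s_{j+2},\dots,s_r)+Q'(\tbo,u+s_j-1;s_{j+1},\dots,s_r)\big)$. This is exactly where Lemma \ref{lem:dominodaoshi for MZF} enters: the collection of $\frac{1}{s_j-1}\cdot Q$- and $\frac{1}{s_j-1}\cdot Q'$-terms indexed by $i$ is, up to the factor $\frac{1}{s_j-1}$ and a substitution $u\mapsto u+s_j-1$, an alternating sum of the shape treated in (\ref{sumofQ}) with $(v_1,\dots,v_n)=(s_{j+1},\dots,s_r)$; Lemma \ref{lem:dominodaoshi for MZF} forces all but the first two $\zeta$-terms and the $Q'$-term to cancel. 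Thus I expect the bookkeeping of the $\frac{1}{s_j-1}$-contributions via the "domino" lemma to be the main obstacle; the identification of the $R$-terms and the finite $\zeta(-k)$-sums with $S_1,\dots,S_4$ is then just a matter of matching the definitions, since $S_1,\dots,S_4$ were evidently set up for precisely this purpose.
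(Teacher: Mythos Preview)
Your overall strategy is the paper's: expand $Q'$ from its definition, apply the recurrence (\ref{modified version of original rec rel with integral term}) to strip off the trailing $s_j$, then regroup into $S_1,\dots,S_4$ and the $\tfrac{1}{s_j-1}$-pieces. Two of the organizational steps, however, are missing or misdescribed, and without them the matching you sketch does not go through.

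First, the paper does not expand $(\tbo,u)*(s_{j+i}\bullet_i\cdots\bullet_2 s_{j+1})$ by ``tracking where $u$ lands.'' It first performs the set-theoretical decomposition (\ref{set-theoritical decomposition of set of bullets}) of the bullet-choices by the position $l$ of the rightmost comma, so that each summand has the shape $(s_{j+i}\bullet_i\cdots\bullet_{l+1}s_{j+l},\, s_{j+l-1}+\cdots+s_{j+1})$ ending in a single letter. One step of the harmonic-product recursion against $(\tbo,u)$ then yields exactly three terms $A_1,A_2,A_3$. This decomposition is what produces the inner sum $\sum_{l=2}^{i}$ in $S_2$ and $S_4$; your ``where $u$ lands'' heuristic does not generate that $l$-summation, and your three cases do not coincide with $A_1,A_2,A_3$ (in $A_1$ the letter $u$ stays inside a harmonic product with a \emph{truncated} block, not the whole one).

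Second, your account of the $\tfrac{1}{s_j-1}$-contributions is off. A single invocation of Lemma \ref{lem:dominodaoshi for MZF} produces exactly one $\zeta$-term, so it cannot by itself leave behind two $\zeta$'s and a $Q'$. In the paper these contributions are separated according to $A_1,A_2,A_3$. The $A_2$-pieces (trailing letter $u$, hence $u+s_j-1$ after the recurrence) reassemble \emph{directly}, by undoing the bullet-decomposition, into $Q'(\tbo,u+s_j-1;s_{j+1},\dots,s_r)$; no telescoping is involved. The $A_1$- and $A_3$-pieces, after the substitution $m=i+1-l$ that swaps the double sum, are identified as alternating $Q$/$Q'$ sums, and Lemma \ref{lem:dominodaoshi for MZF} is then applied \emph{twice}: once with leading word $(\tbo,u)$ and $u_{\text{lemma}}=s_j-1$ to obtain $\zeta(\tbo,u,s_{j+1}+s_j-1,\dots)$, and once with leading word $\tbo$ and $u_{\text{lemma}}=u+s_j-1$ to obtain $\zeta(\tbo,u+s_{j+1}+s_j-1,\dots)$.
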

%
\begin{proof}
  To calculate $Q'$, we use the following set-theoritical decomposition; 
  for $i=2,\dots,r-j$, we have
  \begin{equation}
   \label{set-theoritical decomposition of set of bullets}
   \begin{split}
    &\{ (\bullet_i,\dots,\bullet_2) 
         \,|\, 
        \bullet_k\in\{ +\,,\,\textbf{\huge,} \}\ k=2,\dots,i \} \\
    &= \bigcup_{l=2}^{i} 
        \{ (\bullet_i,\dots,\bullet_{l+1},\,\textbf{\huge,}\,,\underbrace{+,\dots,+}_{\text{$l-2$}})
           \,|\, \bullet_k\in\{ +\,,\,\textbf{\huge,} \}\ k=l+1,\dots,i \} 
       \cup \{ (\underbrace{+,\dots,+}_{\text{$i-1$}}) \}.
   \end{split}
  \end{equation}
  Here the term of $l=2$ in the above decomposition is 
  $ (\bullet_i,\dots,\bullet_{3},\,\textbf{\huge,}) $,
  $ \bullet_k \in\{ +\,,\,\textbf{\huge,}\}$ for $k=3,\dots,i$.
  Thanks to the decomposition, we have
  \begin{equation*}
   \begin{split}
    &Q'(\tbo,u,s_j;s_{j+1},\dots,s_r) \\
    &:= \sum_{i=1}^{r-j} 
           (-1)^{i-1}
            \sum_{
             \substack{
                       \bullet_k=,\,\rm{or}\, + \\
                       2\le k \le i
                      }
                 }
                 \zeta^*(\{(\tbo,u)*(s_{j+i} \bullet_i \cdots \bullet_2 s_{j+1})\},s_j)   
                  \zeta(s_{j+i+1},\dots,s_r) \\
    &= \sum_{i=1}^{r-j}
        (-1)^{i-1}\zeta^{*}(\{(\tbo,u)*(s_{j+i} + \cdots +s_{j+1})\},s_j)
                   \zeta(s_{j+i+1},\dots,s_r) \\
    &\hspace{2mm}
     + \sum_{i=2}^{r-j} 
        (-1)^{i-1}
        \sum_{l=2}^i
         \sum_{
          \substack{
                    \bullet_k=,\,\rm{or}\, + \\
                    l+1\le k \le i
                   }
              }\\
              &\hspace{5mm}
              \zeta^*(\{(\tbo,u)*(s_{j+i} \bullet_i \cdots \bullet_{l+1} s_{j+l},
                        s_{j+l-1}+\cdots+s_{j+1})\},s_j)   
               \zeta(s_{j+i+1},\dots,s_r). 
   \end{split}
  \end{equation*}  
      By using the harmonic product, we have
  \begin{equation*}
   \begin{split}
    &Q'(\tbo,u,s_j;s_{j+1},\dots,s_r) \\
    &= \sum_{i=1}^{r-j}
        (-1)^{i-1}\zeta(\tbo,u,s_{j+i} + \cdots +s_{j+1},s_j)
                   \zeta(s_{j+i+1},\dots,s_r) \\
    &\hspace{2mm}
     + \sum_{i=1}^{r-j}
        (-1)^{i-1}\zeta^{*}(\{\tbo*(s_{j+i} + \cdots +s_{j+1})\},u,s_j)
                   \zeta(s_{j+i+1},\dots,s_r) \\
    &\hspace{2mm}
     + \sum_{i=1}^{r-j}
        (-1)^{i-1}\zeta(\tbo,u+s_{j+i} + \cdots +s_{j+1},s_j)
                   \zeta(s_{j+i+1},\dots,s_r) \\
    &\hspace{2mm} 
     + \sum_{i=2}^{r-j} 
        (-1)^{i-1}
         \sum_{l=2}^i
          \sum_{
           \substack{
                     \bullet_k=,\,\rm{or}\, + \\
                     l+1\le k \le i
                    }
               }
               \Big(A_1 + A_2 +A_3\Big)\zeta(s_{j+i+1},\dots,s_r).
   \end{split}
  \end{equation*}
      In the above equation, we set 
       \begin{align*}
         A_1 &:= \zeta^*(\{(\tbo,u)*(s_{j+i} \bullet_i \cdots \bullet_{l+1} s_{j+l})\},
                           s_{j+l-1}+\cdots+s_{j+1},s_j), \\
         A_2 &:= \zeta^*(\{\tbo*(s_{j+i} \bullet_i \cdots \bullet_{l+1} s_{j+l},
                           s_{j+l-1}+\cdots+s_{j+1})\},u,s_j), \\
         A_3 &:= \zeta^*(\{\tbo*(s_{j+i} \bullet_i \cdots \bullet_{l+1} s_{j+l})\},
                         u+s_{j+l-1}+\cdots+s_{j+1},s_j).
       \end{align*}
      By applying (\ref{modified version of original rec rel with integral term}) 
      to each term in the above equation, we obtain
  \begin{equation}
   \label{the calculation of Q' last step}
   \begin{split}
    &Q'(\tbo,u,s_j;s_{j+1},\dots,s_r) \\
    &= \frac{1}{s_j-1} 
        \sum_{i=1}^{r-j}
         (-1)^{i-1}\zeta(\tbo,u,s_{j+i} + \cdots +s_{j+1}+s_j-1)
                    \zeta(s_{j+i+1},\dots,s_r) \\
    &\hspace{2mm}
     + \frac{1}{s_j-1}
        \sum_{i=1}^{r-j}
         (-1)^{i-1}\zeta^{*}(\{\tbo*(s_{j+i} + \cdots +s_{j+1})\},u+s_j-1)
                    \zeta(s_{j+i+1},\dots,s_r) \\
    &\hspace{2mm}
     + \frac{1}{s_j-1}
        \sum_{i=1}^{r-j}
         (-1)^{i-1}\zeta(\tbo,u+s_{j+i} + \cdots +s_{j+1}+s_j-1)
                    \zeta(s_{j+i+1},\dots,s_r) \\
    &\hspace{2mm}
     + \frac{1}{s_j-1} 
        \sum_{i=2}^{r-j} 
         (-1)^{i-1} 
          \sum_{l=2}^i
           \sum_{
            \substack{
                      \bullet_k=,\,\rm{or}\, + \\
                      l+1\le k \le i
                     }
                }
                A'_1\zeta(s_{j+i+1},\dots,s_r) \\
    &\hspace{2mm}
     + \frac{1}{s_j-1} 
        \sum_{i=2}^{r-j} 
         (-1)^{i-1} 
          \sum_{l=2}^i
           \sum_{
            \substack{
                      \bullet_k=,\,\rm{or}\, + \\
                      l+1\le k \le i
                     }
                }
                A'_2 \zeta(s_{j+i+1},\dots,s_r) \\
    &\hspace{2mm}
     + \frac{1}{s_j-1} 
        \sum_{i=2}^{r-j} 
         (-1)^{i-1} 
          \sum_{l=2}^i
           \sum_{
            \substack{
                      \bullet_k=,\,\rm{or}\, + \\
                      l+1\le k \le i
                     }
                }  
                A'_3\zeta(s_{j+i+1},\dots,s_r) \\
    &\hspace{2mm}
     + \sum_{k=0}^{M-1}
          \binom{-s_j}{k}\zeta(-k)
           \Big( S_1(k)+S_2(k) \Big) 
     + S_3 + S_4
   \end{split}
  \end{equation}
  with $S_1$, $S_2$, $S_3$, $S_4$ in 
  Notation \ref{notation:S_i for the calculation of RHS of the harmonic relation of MZFs} and
  \begin{align*}
    A'_1 &:= \zeta^*(\{(\tbo,u)*(s_{j+i} \bullet_i \cdots \bullet_{l+1} s_{j+l})\},
                     s_{j+l-1}+\cdots+s_{j+1}+s_j-1), \\
    A'_2 &:= \zeta^*(\{\tbo*(s_{j+i} \bullet_i \cdots \bullet_{l+1} s_{j+l},
                     s_{j+l-1}+\cdots+s_{j+1})\},u+s_j-1), \\
    A'_3 &:= \zeta^*(\{\tbo*(s_{j+i} \bullet_i \cdots \bullet_{l+1} s_{j+l})\},
                     u+s_{j+l-1}+\cdots+s_{j+1}+s_j-1).
  \end{align*}
  We note that the each summand of the first and third terms in the right-hand side of above equation 
  (\ref{the calculation of Q' last step}) are the product of 
  two multiple zeta functions given by 
  $(-1)^{i-1}Q(\tbo,u,s_{j+i}+\cdots+s_{j}-1;s_{j+i+1},\dots,s_r)$ and 
  $(-1)^{i-1}Q(\tbo,u+s_{j+i}+\cdots+s_j-1;s_{j+i+1},\dots,s_r)$ respectively. 
  We also note that the summation of the summand of the second and fifth terms is equal to 
  $Q'(\tbo,u+s_j-1;s_{j+1},\dots,s_r)$ by definition. 
  By putting $m=i+1-l$ in the fourth and the sixth terms, we obtain
  \begin{equation*}
   \begin{split}
    &Q'(\tbo,u,s_j;s_{j+1},\dots,s_r) \\
    &= \frac{1}{s_j-1} 
        \sum_{i=1}^{r-j}
         (-1)^{i-1}Q(\tbo,u,s_{j+i}+\cdots+s_{j}-1;s_{j+i+1},\dots,s_r) \\
    &\hspace{2mm}
     + \frac{1}{s_j-1}
        Q'(\tbo,u+s_j-1;s_{j+1},\dots,s_r) \\
    &\hspace{2mm}
     + \frac{1}{s_j-1}
        \sum_{i=1}^{r-j}
         (-1)^{i-1}Q(\tbo,u+s_{j+i}+\cdots+s_j-1;s_{j+i+1},\dots,s_r) \\
    &\hspace{2mm}
     + \frac{1}{s_j-1} 
        \sum_{l=2}^{r-j} 
         (-1)^{l-1} 
          \sum_{m=1}^{r-j-l+1}
           (-1)^{m-1}
            \sum_{
               \substack{
                         \bullet_k=,\,\rm{or}\, + \\
                         l+1\le k \le m+l-1
                        }
                  }
                  A'_1\zeta(s_{j+m+l},\dots,s_r) \\
    &\hspace{2mm}
     + \frac{1}{s_j-1} 
        \sum_{l=2}^{r-j} 
         (-1)^{l-1} 
          \sum_{m=1}^{r-j-l+1}
           (-1)^{m-1}
            \sum_{
               \substack{
                         \bullet_k=,\,\rm{or}\, + \\
                         l+1\le k \le m+l-1
                        }
                  }
                  A'_3\zeta(s_{j+m+l},\dots,s_r) \\
    &\hspace{2mm}
     + \sum_{k=0}^{M-1}
          \binom{-s_j}{k}\zeta(-k)
           \Big( S_1(k)+S_2(k) \Big) 
     + S_3 + S_4.
   \end{split}
  \end{equation*}
  Since the summand of the above equation with respect to each $l$ is described in terms of $Q'$,
  we obtain
  \begin{equation*}
   \begin{split}
    &Q'(\tbo,u,s_j;s_{j+1},\dots,s_r) \\
    &= \frac{1}{s_j-1} 
        \sum_{i=1}^{r-j}
         (-1)^{i-1}Q(\tbo,u,s_{j+i}+\cdots+s_{j}-1;s_{j+i+1},\dots,s_r) \\
    &\hspace{2mm}
     + \frac{1}{s_j-1}
        Q'(\tbo,u+s_j-1;s_{j+1},\dots,s_r) \\
    &\hspace{2mm}
     + \frac{1}{s_j-1}
        \sum_{i=1}^{r-j}
         (-1)^{i-1}Q(\tbo,u+s_{j+i}+\cdots+s_j-1;s_{j+i+1},\dots,s_r) \\
    &\hspace{2mm}
     + \frac{1}{s_j-1} 
        \sum_{l=2}^{r-j} 
         (-1)^{l-1} 
          Q'(\tbo,u,s_{j+l-1}+\cdots+s_{j}-1;s_{j+l},\dots,s_r) \\
    &\hspace{2mm}
     + \frac{1}{s_j-1} 
        \sum_{l=2}^{r-j} 
         (-1)^{l-1} 
          Q'(\tbo,u+s_{j+l-1}+\cdots+s_{j}-1;s_{j+l},\dots,s_r) \\
    &\hspace{2mm}
     + \sum_{k=0}^{M-1}
          \binom{-s_j}{k}\zeta(-k)
           \Big( S_1(k)+S_2(k) \Big) 
     + S_3 + S_4.
   \end{split}
  \end{equation*}
  
      Since Lemma \ref{lem:dominodaoshi for MZF} 
      says that the summation of the first and fourth terms in the above equation is equal to 
      $\frac{1}{s_j-1}\zeta(\tbo,u,s_{j+1}+s_j-1,s_{j+2},\dots,s_r)$ 
      and the summation of the third and fifth terms is equal to
      $\frac{1}{s_j-1}\zeta(\tbo,u+s_{j+1}+s_{j}-1,s_{j+2},\dots,s_r)$, we get the equation
      (\ref{eqn:calculation of the RHS of harmonic rel of mzf}).
\end{proof}

By using Lemma \ref{lem:the calculation of the LHS of the harmonic relation of MZFs}
and Lemma \ref{lem:the calculation of the RHS of the harmonic relation of MZFs},
one can reformulate (\ref{modified harmonic relation among MZF}) as follows. 

\begin{lem}\label{lem:cal for the cal of both sides of harmonic rel of MZF}
  Let $r\in\N_{\ge 2}$, 
  $j\in\{ 2,\dots,r-1 \}$, 
  $\tbo\in S_{\N}^{\bullet}$ and 
  $u\in S_{\N}$.
  We have
  \begin{equation}
  \label{eqn:cal for the cal of RHS of harmonic rel of MZF}
   \begin{split}
     &\zeta(\tbo,u,s_j,s_{j+1},\dots,s_r)
      + \zeta(\tbo,u,s_j+s_{j+1},s_{j+2},\dots,s_r) \\
     &= \frac{1}{s_j-1}
         \zeta(\tbo,u+s_j-1,s_{j+1},\dots,s_r) 
         - \frac{1}{s_j-1}
            \zeta(\tbo,u,s_{j+1}+s_j-1,s_{j+2},\dots,s_r) \\
     &\hspace{3mm}
      + \sum_{k=0}^{M-1}
         \binom{-s_j}{k}
         \zeta(-k)Q(\tbo,u+s_j+k;s_{j+1},\dots,s_r) \\
     &\hspace{3mm}
      - \sum_{k=0}^{M-1}
         \binom{-s_j}{k}\zeta(-k)
          \Big(S_1(k)+S_2(k)\Big) 
      + R_{M-\epsilon}(\tbo,u,s_j)\zeta(s_{j+1},\dots,s_r) - S_3 - S_4. 
   \end{split}
  \end{equation}
\end{lem}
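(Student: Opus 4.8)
The plan is to substitute Lemma~\ref{lem:the calculation of the LHS of the harmonic relation of MZFs} into the left-hand side of the harmonic relation (\ref{modified harmonic relation among MZF}) and Lemma~\ref{lem:the calculation of the RHS of the harmonic relation of MZFs} into the $Q'$-term on its right-hand side, and then to match and cancel the coinciding summands. Since $j\in\{2,\dots,r-1\}$, both lemmas apply. After this substitution the left-hand side of (\ref{modified harmonic relation among MZF}) becomes
\[
  \tfrac{1}{s_j-1}Q(\tbo,u+s_j-1;s_{j+1},\dots,s_r)
  + \sum_{k=0}^{M-1}\binom{-s_j}{k}\zeta(-k)\,Q(\tbo,u+s_j+k;s_{j+1},\dots,s_r)
  + R_{M-\epsilon}(\tbo,u,s_j)\zeta(s_{j+1},\dots,s_r),
\]
while the right-hand side is the sum of $\zeta(\tbo,u,s_j,s_{j+1},\dots,s_r)+\zeta(\tbo,u,s_j+s_{j+1},s_{j+2},\dots,s_r)$ and the expression given for $Q'(\tbo,u,s_j;s_{j+1},\dots,s_r)$ in Lemma~\ref{lem:the calculation of the RHS of the harmonic relation of MZFs}.

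Next I would expand the term $\tfrac{1}{s_j-1}Q(\tbo,u+s_j-1;s_{j+1},\dots,s_r)$ on the left by the defining identity of $Q$ in Notation~\ref{notation:Q & Q',the product of two MZFs} (with place-parameter $u+s_j-1$). This splits it into $\tfrac{1}{s_j-1}\zeta(\tbo,u+s_j-1,s_{j+1},\dots,s_r)$, plus $\tfrac{1}{s_j-1}\zeta(\tbo,u+s_j-1+s_{j+1},s_{j+2},\dots,s_r)$, plus $\tfrac{1}{s_j-1}Q'(\tbo,u+s_j-1;s_{j+1},\dots,s_r)$. The last of these cancels the identically-named $Q'$-term coming from Lemma~\ref{lem:the calculation of the RHS of the harmonic relation of MZFs}, and the middle one cancels the term $\tfrac{1}{s_j-1}\zeta(\tbo,u+s_{j+1}+s_j-1,s_{j+2},\dots,s_r)$ on the right, because $u+s_j-1+s_{j+1}=u+s_{j+1}+s_j-1$ in the commutative semigroup $S_{\N}$. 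After these cancellations, transposing $\zeta(\tbo,u,s_j,s_{j+1},\dots,s_r)+\zeta(\tbo,u,s_j+s_{j+1},s_{j+2},\dots,s_r)$ to the left and collecting the surviving summands (the two $\tfrac{1}{s_j-1}\zeta$-terms, the $\sum_k$-sum involving $Q$, the $\sum_k$-sum involving $S_1(k)+S_2(k)$, together with $R_{M-\epsilon}(\tbo,u,s_j)\zeta(s_{j+1},\dots,s_r)$, $S_3$, $S_4$) gives precisely (\ref{eqn:cal for the cal of RHS of harmonic rel of MZF}).

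There is no analytic difficulty: each line is an identity of meromorphic functions on the common domain, inherited termwise from Theorem~\ref{thm:the original recurrence relation of MZF} and the earlier lemmas, with the vertical contours $(M-\epsilon)$ held fixed throughout, so the remainder integrals $R_{M-\epsilon}$ and the sums $S_3$, $S_4$ of Notation~\ref{notation:S_i for the calculation of RHS of the harmonic relation of MZFs} are simply carried along. The only point requiring care — and the one I expect to be the main (entirely routine) obstacle — is the bookkeeping of the two cancellations: one must check that the place-parameter $u+s_j-1$ produced when $Q$ is expanded via Notation~\ref{notation:Q & Q',the product of two MZFs} coincides exactly with that of the $Q'$ term appearing in Lemma~\ref{lem:the calculation of the RHS of the harmonic relation of MZFs}, and that $u+s_j-1+s_{j+1}$ is literally the same word argument as $u+s_{j+1}+s_j-1$, so that the two $\zeta$'s do in fact cancel.
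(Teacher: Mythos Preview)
Your proposal is correct and follows essentially the same route as the paper: apply Lemma~\ref{lem:the calculation of the LHS of the harmonic relation of MZFs} and Lemma~\ref{lem:the calculation of the RHS of the harmonic relation of MZFs} to the two sides of (\ref{modified harmonic relation among MZF}), then use the relation $Q=\zeta+\zeta+Q'$ from Notation~\ref{notation:Q & Q',the product of two MZFs} (equivalently, the paper's identity~(\ref{eqn:rel between Q and Q'})) to cancel the $Q'$-term and the $\zeta(\tbo,u+s_{j+1}+s_j-1,\dots)$-term. The bookkeeping you flag is indeed routine, and your verification of the two cancellations matches the paper's one-line appeal to~(\ref{eqn:rel between Q and Q'}).
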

%
\begin{proof}
  Applying 
  Lemma \ref{lem:the calculation of the LHS of the harmonic relation of MZFs} 
  to the left-hand side of 
  (\ref{modified harmonic relation among MZF})
  and Lemma \ref{lem:the calculation of the RHS of the harmonic relation of MZFs} 
  to the right-hand side of
  (\ref{modified harmonic relation among MZF}), 
  we obtain
  \begin{equation}
   \label{after calculation of both sides of harmonic rel of mzf}
   \begin{split}
     &\frac{1}{s_j-1} Q(\tbo,u+s_j-1;s_{j+1},\dots,s_r) 
      + \sum_{k=0}^{M-1}
         \binom{-s_j}{k}
          \zeta(-k)Q(\tbo,u+s_j+k;s_{j+1},\dots,s_r) \\
     &\hspace{2mm}
      + \, R_{M-\epsilon}(\tbo,u,s_j)\zeta(s_{j+1},\dots,s_r) \\
     &= \zeta(\tbo,u,s_j,s_{j+1},\dots,s_r)
        + \zeta(\tbo,u,s_j+s_{j+1},s_{j+2},\dots,s_r) \\
     &\hspace{2mm} 
      + \frac{1}{s_j-1}
         \zeta(\tbo,u,s_{j+1}+s_j-1,s_{j+2},\dots,s_r)
      + \frac{1}{s_j-1}
         \zeta(\tbo,u+s_{j+1}+s_{j}-1,s_{j+2},\dots,s_r) \\
     &\hspace{2mm}
       + \frac{1}{s_j-1}
          Q'(\tbo,u+s_j-1;s_{j+1},\dots,s_r) \\
     &\hspace{2mm}
       + \sum_{k=0}^{M-1}
          \binom{-s_j}{k}\zeta(-k)
           \Big( S_1(k)+S_2(k) \Big) 
       + S_3 + S_4.
   \end{split}
  \end{equation}
 By the equation (\ref{eqn:rel between Q and Q'}), we have the identity
  \begin{equation*}
   \begin{split}
    &Q(\tbo,u+s_j-1;s_{j+1},\dots,s_r) - Q'(\tbo,u+s_j-1;s_{j+1},\dots,s_r) \\ 
    &= \zeta(\tbo,u+s_j-1,s_{j+1},\dots,s_r) + \zeta(\tbo,u+s_{j+1}+s_j-1,s_{j+2},\dots,s_r).
   \end{split}
  \end{equation*}
  By this identity, we obtain the claim.
\end{proof}

The following calculates the third and fourth terms of right-hand side of 
(\ref{eqn:cal for the cal of RHS of harmonic rel of MZF})

\begin{lem}\label{lem:almost reminder term of the cal of RHS of harmonic rel among mzf}
  Let $r$, $M\in\N$, $j\in\{ 2,\dots,r-1 \}$, 
  $\tbo\in S_{\N}^{\bullet}$ and 
  $u\in S_{\N}$.
  The following holds;  
  \begin{equation}
   \label{eqn:almost reminder term of the cal of RHS of harmonic rel among mzf}
   \begin{split}
    &\sum_{k=0}^{M-1}
      \binom{-s_j}{k}
       \zeta(-k)
        \Big( Q(\tbo,u+s_j+k;s_{j+1},\dots,s_r) - S_1(k) - S_2(k) \Big) \\
    &= \sum_{k=0}^{M-1}
        \binom{-s_j}{k}
         \zeta(\tbo,u+s_j+k,s_{j+1},\dots,s_r)\zeta(-k) \\
    &\hspace{3mm}
     - \sum_{k=0}^{M-1}
        \binom{-s_j}{k}       
         \zeta(\tbo,u,s_{j+1}+s_j+k,s_{j+2},\dots,s_r)\zeta(-k).
   \end{split}
  \end{equation}
\end{lem}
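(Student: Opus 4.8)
The plan is to prove the identity termwise in $k$ and then to collapse two telescoping sums by Lemma \ref{lem:dominodaoshi for MZF}. Put $w:=u+s_j+k$; then the partial sums $u+s_{j+i}+\cdots+s_j+k$ occurring in $S_1(k)$ equal $w+s_{j+i}+\cdots+s_{j+1}$, and similarly for those in $S_2(k)$. It suffices to show that, for each $k\in\{0,\dots,M-1\}$,
\[
  Q(\tbo,w;s_{j+1},\dots,s_r)-S_1(k)-S_2(k)
  =\zeta(\tbo,w,s_{j+1},\dots,s_r)-\zeta(\tbo,u,s_{j+1}+s_j+k,s_{j+2},\dots,s_r),
\]
since multiplying by $\binom{-s_j}{k}\zeta(-k)$ and summing over $k$ then gives (\ref{eqn:almost reminder term of the cal of RHS of harmonic rel among mzf}). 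By Notation \ref{notation:Q & Q',the product of two MZFs}, $Q(\tbo,w;s_{j+1},\dots,s_r)$ equals $\zeta(\tbo,w,s_{j+1},\dots,s_r)+\zeta(\tbo,w+s_{j+1},s_{j+2},\dots,s_r)+Q'(\tbo,w;s_{j+1},\dots,s_r)$, so everything comes down to evaluating $S_1(k)+S_2(k)$.

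Grouping the three summands in the definition of $S_1(k)$ with the three in that of $S_2(k)$ (Notation \ref{notation:S_i for the calculation of RHS of the harmonic relation of MZFs}) splits $S_1(k)+S_2(k)$ into three pieces. The piece built from the second summand of $S_1(k)$ and the second summand of $S_2(k)$ carries the constant last entry $w=u+s_j+k$, and reading the set-theoretic decomposition (\ref{set-theoritical decomposition of set of bullets}) in reverse reassembles it exactly into $Q'(\tbo,w;s_{j+1},\dots,s_r)$, by the definition of $Q'$ in Notation \ref{notation:Q & Q',the product of two MZFs}. This $Q'$ cancels against the one produced above, so it remains to prove that the piece built from the first summands of $S_1(k)$ and $S_2(k)$ equals $\zeta(\tbo,u,s_{j+1}+s_j+k,s_{j+2},\dots,s_r)$ and that the piece built from the third summands equals $\zeta(\tbo,w+s_{j+1},s_{j+2},\dots,s_r)$.

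Consider the third of these. Its $S_1(k)$-part is $\sum_{i=1}^{r-j}(-1)^{i-1}Q(\tbo,w+s_{j+i}+\cdots+s_{j+1};s_{j+i+1},\dots,s_r)$, and its $S_2(k)$-part, after the reindexing $m=i-l+1$ together with the decomposition (\ref{set-theoritical decomposition of set of bullets})---the same bookkeeping carried out in the proof of Lemma \ref{lem:the calculation of the RHS of the harmonic relation of MZFs}---rewrites as $-\sum_{i=1}^{r-j-1}(-1)^{i-1}Q'(\tbo,w+s_{j+i}+\cdots+s_{j+1};s_{j+i+1},\dots,s_r)$. Adding the two and applying Lemma \ref{lem:dominodaoshi for MZF} with $u$ replaced by $w$ and $v_t$ by $s_{j+t}$ yields $\zeta(\tbo,w+s_{j+1},s_{j+2},\dots,s_r)$, as wanted. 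The first piece is treated the same way, now with $\tbo$ replaced by the word $(\tbo,u)$ and $u$ by $s_j+k$: Lemma \ref{lem:dominodaoshi for MZF} then produces $\zeta((\tbo,u),s_{j+1}+s_j+k,s_{j+2},\dots,s_r)=\zeta(\tbo,u,s_{j+1}+s_j+k,s_{j+2},\dots,s_r)$. Substituting the three pieces back gives the termwise identity.

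The only real work is the bookkeeping just indicated: one must verify that the triple sums $\sum_i\sum_l\sum_{\bullet}$ defining the first- and third-summand parts of $S_2(k)$ reorganize, via $m=i-l+1$ and (\ref{set-theoritical decomposition of set of bullets}), into sums of the shape $\sum_i(-1)^{i-1}Q'(\,\cdot\,;s_{j+i+1},\dots,s_r)$ with exactly the sign $(-1)^{i-1}$ required by Lemma \ref{lem:dominodaoshi for MZF}. That the shift ``$+k$'' formally leaves $S_{\N}$ is harmless, since every relation invoked holds as an identity of meromorphic functions in the variables other than $s_j$, with $s_j$ and $k$ treated as parameters, exactly as in the proof of Lemma \ref{lem:the calculation of the RHS of the harmonic relation of MZFs}.
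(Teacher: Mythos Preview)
Your proof is correct and follows essentially the same route as the paper's: you split $S_1(k)+S_2(k)$ into the same three pieces, reassemble the middle piece as $Q'(\tbo,u+s_j+k;s_{j+1},\dots,s_r)$ via the decomposition (\ref{set-theoritical decomposition of set of bullets}), reindex the remaining $S_2$-parts by $m=i-l+1$ to recognize them as $Q'$-sums, and collapse each with its matching $S_1$-part by Lemma \ref{lem:dominodaoshi for MZF}. The only cosmetic difference is that you fix $k$ and argue termwise with the abbreviation $w=u+s_j+k$, whereas the paper carries the factor $c_k=\binom{-s_j}{k}\zeta(-k)$ and the sum over $k$ through the whole calculation; your remark that the ``$+k$'' formally leaves $S_{\N}$ but that all identities hold as meromorphic functions is a useful clarification the paper leaves implicit.
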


\begin{proof}
  The proof goes similarly to that of
  Lemma \ref{lem:the calculation of the RHS of the harmonic relation of MZFs}.
  We first calculate $\sum_{k=0}^{M-1}\binom{-s_j}{k}\zeta(-k)\Big(S_1(k)+S_2(k)\Big)$.
  To save the space, we set
  \[
    c_k:=\binom{-s_j}{k}\zeta(-k).
  \]
  By the definition of $S_1(k)$ and of $S_2(k)$, we have
  \begin{equation*}
   \begin{split}
    &\sum_{k=0}^{M-1}c_k
       \Big(S_1(k)+S_2(k)\Big) \\
    &= \sum_{k=0}^{M-1}c_k
        \sum_{i=1}^{r-j}(-1)^{i-1}
         \zeta(\tbo,u,s_{j+i}+\cdots+s_j+k)
          \zeta(s_{j+i+1},\dots,s_r) \\
       &\hspace{2mm}
       + \sum_{k=0}^{M-1}c_k
          \sum_{i=1}^{r-j}(-1)^{i-1}
           \zeta^*(\{\tbo*s_{j+i}+\cdots+s_{j+1}\},u+s_j+k)
            \zeta(s_{j+i+1},\dots,s_r) \\
       &\hspace{2mm}
       + \sum_{k=0}^{M-1}c_k
          \sum_{i=1}^{r-j}(-1)^{i-1}
           \zeta(\tbo,u+s_{j+i}+\cdots+s_j+k)
            \zeta(s_{j+i+1},\dots,s_r) \\
    &\hspace{2mm}
      + \sum_{k=0}^{M-1}c_k
         \sum_{i=2}^{r-j} 
          (-1)^{i-1} 
           \sum_{l=2}^i
            \sum_{
             \substack{
                       \bullet_k=,\,\rm{or}\, + \\
                       l+1\le k \le i
                      }
                 }
                 A''_1 \zeta(s_{j+i+1},\dots,s_r) \\
    &\hspace{2mm}
      + \sum_{k=0}^{M-1}c_k
         \sum_{i=2}^{r-j} 
          (-1)^{i-1} 
           \sum_{l=2}^i
            \sum_{
             \substack{
                       \bullet_k=,\,\rm{or}\, + \\
                       l+1\le k \le i
                      }
                 }
                 A''_2 \zeta(s_{j+i+1},\dots,s_r) \\
    &\hspace{2mm}
      + \sum_{k=0}^{M-1}c_k
         \sum_{i=2}^{r-j} 
          (-1)^{i-1} 
           \sum_{l=2}^i
            \sum_{
             \substack{
                       \bullet_k=,\,\rm{or}\, + \\
                       l+1\le k \le i
                      }
                 }
                 A''_3 \zeta(s_{j+i+1},\dots,s_r) \\
   \end{split}
  \end{equation*}
  In the above equation, we set
  \begin{align*}
    A''_1 &:= \zeta^*(\{(\tbo,u)*(s_{j+i} \bullet_i \cdots \bullet_{l+1} s_{j+l})\},
                       s_{j+l-1}+\cdots+s_{j+1}+s_j+k), \\
    A''_2 &:= \zeta^*(\{\tbo*(s_{j+i} \bullet_i \cdots \bullet_{l+1} s_{j+l},
                       s_{j+l-1}+\cdots+s_{j+1})\},u+s_j+k), \\
    A''_3 &:= \zeta^*(\{\tbo*(s_{j+i} \bullet_i \cdots \bullet_{l+1} s_{j+l})\},
                       u+s_{j+l-1}+\cdots+s_{j+1}+s_j+k).
  \end{align*}
  We note that the each summand of the first and third terms in the above equation are the product of 
  two multiple zeta functions given by 
  $(-1)^{i-1}Q(\tbo,u,s_{j+i}+\cdots+s_{j}+k;s_{j+i+1},\dots,s_r)$ and 
  $(-1)^{i-1}Q(\tbo,u+s_{j+i}+\cdots+s_j+k;s_{j+i+1},\dots,s_r)$ respectively. 
  We also note that the summation of the summand of the second and fifth terms is equal to 
  $Q'(\tbo,u+s_j+k;s_{j+1},\dots,s_r)$ by definition. 
  By putting $m=i+1-l$ in the fourth and the sixth terms, we obtain  
  \begin{equation*}
   \begin{split}
     &\sum_{k=0}^{M-1}c_k
       \Big(S_1(k)+S_2(k)\Big) \\
     &= \sum_{k=0}^{M-1}c_k
         \sum_{i=1}^{r-j}
          (-1)^{i-1}Q(\tbo,u,s_{j+i}+\cdots+s_{j}+k;s_{j+i+1},\dots,s_r) \\
     &\hspace{2mm}
      + \sum_{k=0}^{M-1}c_k
         Q'(\tbo,u+s_j+k;s_{j+1},\dots,s_r) \\
     &\hspace{2mm}
      + \sum_{k=0}^{M-1}c_k
        \sum_{i=1}^{r-j}
         (-1)^{i-1}Q(\tbo,u+s_{j+i}+\cdots+s_j+k;s_{j+i+1},\dots,s_r) \\
     &\hspace{2mm}
      + \sum_{k=0}^{M-1}c_k
         \sum_{l=2}^{r-j} 
          (-1)^{l-1} 
           \sum_{m=1}^{r-j-l+1}
            (-1)^{m-1}
             \sum_{
               \substack{
                         \bullet_k=,\,\rm{or}\, + \\
                         l+1\le k \le m+l-1
                        }
                  }
                  A''_1\zeta(s_{j+m+l},\dots,s_r) \\
     &\hspace{2mm}
      + \sum_{k=0}^{M-1}c_k
         \sum_{l=2}^{r-j} 
          (-1)^{l-1} 
           \sum_{m=1}^{r-j-l+1}
            (-1)^{m-1}
             \sum_{
               \substack{
                         \bullet_k=,\,\rm{or}\, + \\
                         l+1\le k \le m+l-1
                        }
                  }
                  A''_3\zeta(s_{j+m+l},\dots,s_r).
   \end{split}
  \end{equation*}
  Since the summand of the above equation with respect to each $l$ is described in terms of $Q'$,
  we obtain
  \begin{equation*}
   \begin{split}
     &\sum_{k=0}^{M-1}c_k
       \Big(S_1(k)+S_2(k)\Big) \\
     &= \sum_{k=0}^{M-1}c_k
         \sum_{i=1}^{r-j}
          (-1)^{i-1}Q(\tbo,u,s_{j+i}+\cdots+s_{j}+k;s_{j+i+1},\dots,s_r) \\
     &\hspace{2mm}
      + \sum_{k=0}^{M-1}c_k
         Q'(\tbo,u+s_j+k;s_{j+1},\dots,s_r) \\
     &\hspace{2mm}
      + \sum_{k=0}^{M-1}c_k
        \sum_{i=1}^{r-j}
         (-1)^{i-1}Q(\tbo,u+s_{j+i}+\cdots+s_j+k;s_{j+i+1},\dots,s_r) \\
     &\hspace{2mm}
      + \sum_{k=0}^{M-1}c_k
         \sum_{l=2}^{r-j} 
          (-1)^{l-1} 
           Q'(\tbo,u,s_{j+l-1}+\cdots+s_{j}+k;s_{j+l},\dots,s_r)
              \\
     &\hspace{2mm}
      + \sum_{k=0}^{M-1}c_k
         \sum_{l=2}^{r-j} 
          (-1)^{l-1} 
           Q'(\tbo,u+s_{j+l-1}+\cdots+s_{j}+k;s_{j+l},\dots,s_r).
   \end{split}
  \end{equation*}
  Since Lemma \ref{lem:dominodaoshi for MZF} 
  says that the summation of the first and fourth terms in the right-hand side above equation 
  is equal to 
  $\sum_{k=0}^{M-1}c_k\zeta(\tbo,u,s_{j+1}+s_j+k,s_{j+2},\dots,s_r)$ 
  and the summation of the third and fifth terms is equal to
  $\sum_{k=0}^{M-1}c_k\zeta(\tbo,u+s_{j+1}+s_{j}+k,s_{j+2},\dots,s_r)$, we get
  \begin{equation}
   \label{eqn:cal of sum-part of Lemma 2.8}
   \begin{split}
     &\sum_{k=0}^{M-1}c_k
       \Big(S_1(k)+S_2(k)\Big) \\
     &= \sum_{k=0}^{M-1}c_k
         \zeta(\tbo,u,s_{j+1}+s_j+k,s_{j+2},\dots,s_r) \\
     &\hspace{2mm}
      + \sum_{k=0}^{M-1}c_k
         Q'(\tbo,u+s_j+k;s_{j+1},\dots,s_r) \\
     &\hspace{2mm}
      + \sum_{k=0}^{M-1}c_k
         \zeta(\tbo,u+s_{j+1}+s_{j}+k,s_{j+2},\dots,s_r).
   \end{split}
  \end{equation}
  By (\ref{eqn:cal of sum-part of Lemma 2.8}), we have
  \begin{equation*}
   \begin{split}
    &\sum_{k=0}^{M-1}c_k
      \Big( Q(\tbo,u+s_j+k;s_{j+1},\dots,s_r) - S_1(k) - S_2(k) \Big) \\
    &= \sum_{k=0}^{M-1}c_k
        \Big( Q(\tbo,u+s_j+k;s_{j+1},\dots,s_r) - Q'(\tbo,u+s_j+k;s_{j+1},\dots,s_r) \Big)\\
    &\hspace{2mm}
      - \sum_{k=0}^{M-1}c_k
         \zeta(\tbo,u,s_{j+1}+s_j+k,s_{j+2},\dots,s_r) \\
    &\hspace{2mm}
      - \sum_{k=0}^{M-1}c_k
         \zeta(\tbo,u+s_{j+1}+s_{j}+k,s_{j+2},\dots,s_r).
   \end{split}
  \end{equation*}
  Hence, by (\ref{eqn:rel between Q and Q'}), 
  we obtain (\ref{eqn:almost reminder term of the cal of RHS of harmonic rel among mzf}).
\end{proof}

Again one can reformulate (\ref{modified harmonic relation among MZF}) 
(whence (\ref{eqn:cal for the cal of RHS of harmonic rel of MZF})) as follows.

\begin{prop}\label{prop:the extended recurrence relation of MZF with integral terms}
  Let $r\in\N_{\ge 2}$, 
  $j\in\{ 2,\dots,r-1 \}$, 
  $\tbo\in S_{\N}^{\bullet}$ and 
  $u\in S_{\N}$.
  We have
  \begin{equation}
   \label{eqn:the extended recurrence relation of MZF with integral terms}
   \begin{split}
    &\zeta(\tbo,u,s_j,\dots,s_r) 
     + \zeta(\tbo,u,s_{j+1}+s_j,\dots,s_r) \\
    &= \frac{1}
            {s_j-1}
            \left\{
                   \zeta(\tbo,u+s_j-1,\dots,s_r)   
                   - \zeta(\tbo,u,s_{j+1}+s_j-1,\dots,s_r) 
            \right\} \\
    &\hspace{3mm}
     + \sum_{k=0}^{M-1}
        \binom{-s_j}{k}
         \left\{
                \zeta(\tbo,u+s_j+k,\dots,s_r)
                 - \zeta(\tbo,s_{j+1}+s_j+k,\dots,s_r) 
         \right\}\zeta(-k) \\
    &\hspace{3mm}
     + R_{M-\epsilon}(\tbo,u,s_j)\zeta(s_{j+1},\dots,s_r) - S_3 - S_4.
   \end{split}
  \end{equation}
\end{prop}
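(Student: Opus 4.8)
The plan is to obtain the statement as an immediate consequence of Lemma~\ref{lem:cal for the cal of both sides of harmonic rel of MZF} and Lemma~\ref{lem:almost reminder term of the cal of RHS of harmonic rel among mzf}, so that essentially no new computation is required beyond bookkeeping of the arguments. One could in principle go back to (\ref{modified harmonic relation among MZF}) and recompute both sides, but the two lemmas have already done exactly that work, so the efficient route is to splice them together.

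First I would take equation~(\ref{eqn:cal for the cal of RHS of harmonic rel of MZF}) as the starting point. Its left-hand side is already $\zeta(\tbo,u,s_j,s_{j+1},\dots,s_r)+\zeta(\tbo,u,s_j+s_{j+1},s_{j+2},\dots,s_r)$, which is the left-hand side we want. On the right-hand side, the two finite sums $\sum_{k=0}^{M-1}\binom{-s_j}{k}\zeta(-k)\,Q(\tbo,u+s_j+k;s_{j+1},\dots,s_r)$ and $-\sum_{k=0}^{M-1}\binom{-s_j}{k}\zeta(-k)\bigl(S_1(k)+S_2(k)\bigr)$ are merged into the single sum $\sum_{k=0}^{M-1}\binom{-s_j}{k}\zeta(-k)\bigl(Q(\tbo,u+s_j+k;s_{j+1},\dots,s_r)-S_1(k)-S_2(k)\bigr)$.

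Next I would invoke Lemma~\ref{lem:almost reminder term of the cal of RHS of harmonic rel among mzf}, i.e.\ equation~(\ref{eqn:almost reminder term of the cal of RHS of harmonic rel among mzf}), which rewrites precisely this merged sum as $\sum_{k=0}^{M-1}\binom{-s_j}{k}\zeta(\tbo,u+s_j+k,s_{j+1},\dots,s_r)\zeta(-k)-\sum_{k=0}^{M-1}\binom{-s_j}{k}\zeta(\tbo,u,s_{j+1}+s_j+k,s_{j+2},\dots,s_r)\zeta(-k)$, which is exactly the $k$-sum appearing in~(\ref{eqn:the extended recurrence relation of MZF with integral terms}). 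The remaining pieces of~(\ref{eqn:cal for the cal of RHS of harmonic rel of MZF}) are carried over verbatim: $\frac{1}{s_j-1}\zeta(\tbo,u+s_j-1,s_{j+1},\dots,s_r)-\frac{1}{s_j-1}\zeta(\tbo,u,s_{j+1}+s_j-1,s_{j+2},\dots,s_r)$ matches the brace term with coefficient $\frac{1}{s_j-1}$, and $R_{M-\epsilon}(\tbo,u,s_j)\zeta(s_{j+1},\dots,s_r)-S_3-S_4$ matches the last line. Collecting everything gives~(\ref{eqn:the extended recurrence relation of MZF with integral terms}).

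The step I expect to require the most care is not conceptual but notational: the statement of~(\ref{eqn:the extended recurrence relation of MZF with integral terms}) hides the interior variables $s_{j+2},\dots,s_r$ behind ``$\dots$'', so I would want to verify that the argument strings produced by Lemma~\ref{lem:almost reminder term of the cal of RHS of harmonic rel among mzf} (in particular the term $\zeta(\tbo,u,s_{j+1}+s_j+k,s_{j+2},\dots,s_r)$, as opposed to the compressed ``$\zeta(\tbo,s_{j+1}+s_j+k,\dots,s_r)$'' in the typeset statement) and the $\frac{1}{s_j-1}$-terms align slot-for-slot with the intended reading, and that the ranges $j\in\{2,\dots,r-1\}$ and $0\le k\le M-1$ are consistent across the two lemmas. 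Apart from this routine matching, the argument is a one-line substitution and I do not anticipate a genuine obstacle.
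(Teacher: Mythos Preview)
Your proposal is correct and follows exactly the paper's approach: the paper's own proof is the single sentence ``It is a direct consequence of Lemma~\ref{lem:cal for the cal of both sides of harmonic rel of MZF} and Lemma~\ref{lem:almost reminder term of the cal of RHS of harmonic rel among mzf},'' and you have simply unpacked this substitution in detail. Your cautionary remark about the compressed ``$\dots$'' notation (and the apparent missing $u$ in the printed term $\zeta(\tbo,s_{j+1}+s_j+k,\dots,s_r)$) is well taken but is a typographical rather than mathematical point.
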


\begin{proof}
  It is a direct consequence of 
  Lemma \ref{lem:cal for the cal of both sides of harmonic rel of MZF} and 
  Lemma \ref{lem:almost reminder term of the cal of RHS of harmonic rel among mzf}.
\end{proof}

The extended recurrence relation of the MZF is obtained by specializing 
the equation (\ref{eqn:the extended recurrence relation of MZF with integral terms}).

\begin{thm}\label{thm:the extended recurrence relation of MZF}
  Let $r\in\N_{\ge 2}$, 
  $j\in\{ 2,\dots,r-1 \}$ and 
  $n_j\in\N_0$.
  We have
  \begin{equation}
   \label{rec2}
    \begin{split}
      &\zeta_r(s_1,\dots,s_{j-1},-n_j,s_{j+1}\dots,s_r)
       + \zeta_{r-1}(s_1\dots,s_{j-1},s_{j+1}-n_j,s_{j+2},\dots,s_r) \\
      &\hspace{3mm}
       = -\frac{1}
               {n_j+1}
               \zeta_{r-1}(s_1,\dots,s_{j-2},s_{j-1}-n_j-1,s_{j+1},\dots,s_r) \\
      &\hspace{5.7mm}
       + \frac{1}
              {n_j+1}
              \zeta_{r-1}(s_1,\dots,s_{j-1},s_{j+1}-n_j-1,s_{j+2},\dots,s_r) \\
      &\hspace{5.7mm}
       + \sum_{k=0}^{n_j}
          \binom{n_j}{k}
           \zeta_{r-1}(s_1,\dots,s_{j-2},s_{j-1}-n_j+k,s_{j+1},\dots,s_r)\zeta(-k) \\
      &\hspace{5.7mm}
       - \sum_{k=0}^{n_j}
          \binom{n_j}{k}       
           \zeta_{r-1}(s_1,\dots,s_{j-1},s_{j+1}-n_j+k,s_{j+2},\dots,s_r)\zeta(-k). 
    \end{split}
  \end{equation}
\end{thm}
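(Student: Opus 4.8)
The plan is to obtain Theorem~\ref{thm:the extended recurrence relation of MZF} by specializing the identity of Proposition~\ref{prop:the extended recurrence relation of MZF with integral terms}. First I would invoke that proposition with $\tbo=(s_1,\dots,s_{j-2})$ (interpreted as the empty word $1$ when $j=2$) and $u=s_{j-1}$. After this purely notational substitution, the bare $\zeta$-terms of~(\ref{eqn:the extended recurrence relation of MZF with integral terms}) become $\zeta_r(s_1,\dots,s_{j-1},s_j,s_{j+1},\dots,s_r)$ on the left, and $\zeta_{r-1}(s_1,\dots,s_{j-1},s_{j+1}+s_j,s_{j+2},\dots,s_r)$, $\zeta_{r-1}(s_1,\dots,s_{j-2},s_{j-1}+s_j-1,s_{j+1},\dots,s_r)$, $\zeta_{r-1}(s_1,\dots,s_{j-1},s_{j+1}+s_j-1,s_{j+2},\dots,s_r)$, and the shifted $\zeta_{r-1}$'s inside the $k$-sums on the right; no argument is needed for this, only matching of slots. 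At the same time I would fix $M\in\N$ large enough (the freedom to do so is part of Proposition~\ref{prop:the extended recurrence relation of MZF with integral terms}) that the remainder term $R_{M-\epsilon}(\tbo,u,s_j)$ and the sums $S_3$, $S_4$ of Notation~\ref{notation:S_i for the calculation of RHS of the harmonic relation of MZFs} are holomorphic in a neighbourhood of the hyperplane $\{s_j=-n_j\}$, the remaining variables being generic.

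Next I would substitute $s_j=-n_j$ into the resulting identity of meromorphic functions. Since $-n_j$ together with generic values of the other variables is a regular point of every MZF appearing on either side, this specialization is legitimate. Under it the coefficient $\tfrac{1}{s_j-1}$ becomes $-\tfrac{1}{n_j+1}$; the binomial coefficient $\binom{-s_j}{k}$ becomes $\binom{n_j}{k}$, which vanishes for $n_j<k\le M-1$, so the two finite $k$-sums collapse to $\sum_{k=0}^{n_j}$; and the left-hand side together with the four surviving bare $\zeta$-terms on the right become exactly the six terms of~(\ref{rec2}).

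The only step that requires a genuine argument is that the analytic part $R_{M-\epsilon}(\tbo,u,s_j)\,\zeta(s_{j+1},\dots,s_r)-S_3-S_4$ vanishes identically at $s_j=-n_j$. Here I would use that every occurrence of $R_{M-\epsilon}$ inside these three expressions has $s_j$ as its last variable, and so, by the definition of $R_{M-\epsilon}$, carries the scalar factor $\tfrac{1}{\Gamma(s_j)}$ pulled out of the contour integral. Since $\tfrac{1}{\Gamma(-n_j)}=0$ while, by the choice of $M$, the accompanying contour integrals and the companion $\zeta$-factors (which do not involve $s_j$) are holomorphic at $s_j=-n_j$, each such product is a zero times a finite quantity and hence vanishes there. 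Combining this with the previous paragraph converts~(\ref{eqn:the extended recurrence relation of MZF with integral terms}) into~(\ref{rec2}).

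The main obstacle is exactly this last point: one must be certain the zero of $\tfrac{1}{\Gamma(s_j)}$ at $s_j=-n_j$ is not cancelled by a pole of the contour integral or of the inner $(r-1)$-fold zeta values, which would leave an indeterminate contribution. This is precisely where enlarging $M$ is used: pushing the path of integration $\re z=M-\epsilon$ sufficiently far to the right keeps $\Gamma(s_j+z)$ and the inner zeta factors holomorphic near $s_j=-n_j$, in the manner of Matsumoto's analysis in~\cite{M}, so the products genuinely vanish rather than giving an indeterminate form. Everything else in the proof is the bookkeeping of identifying terms across the relabelling $\tbo=(s_1,\dots,s_{j-2})$, $u=s_{j-1}$.
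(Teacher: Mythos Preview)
Your proposal is correct and follows essentially the same route as the paper: specialize Proposition~\ref{prop:the extended recurrence relation of MZF with integral terms} with $\tbo=(s_1,\dots,s_{j-2})$, $u=s_{j-1}$, set $s_j=-n_j$, and kill the residual terms $R_{M-\epsilon}(\cdots)\zeta(\cdots)-S_3-S_4$ via the factor $1/\Gamma(s_j)$. The only cosmetic difference is that the paper fixes $M-1=n_j$ exactly, so the $k$-sum already has the correct range, whereas you take $M$ large and then invoke $\binom{n_j}{k}=0$ for $k>n_j$; your extra care about possible poles cancelling the zero of $1/\Gamma(s_j)$ is a point the paper leaves implicit.
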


\begin{proof}
  It is obtained as a special case of 
  Proposition \ref{prop:the extended recurrence relation of MZF with integral terms}
  with $\tbo=(s_1,\dots,s_{j-2})$, $u=s_{j-1}$, $M-1=n_j$ and $s_j=-n_j$,
  where we set $\tbo=1$ (the empty word) when $j=2$. 
  We note that the integral terms appearing in $S_3$ and $S_4$ are $0$ because 
  \begin{align*}
    R_{M-\epsilon}(s_1,\dots,s_j) 
      &= \frac{1}
             {2\pi i}
             \int_{(M-\epsilon)}
             \frac{\Gamma(s_j+z)\Gamma(-z)}
                  {\Gamma(s_j)}
                  \zeta(s_1,\dots,s_{j-1}+s_j+z)\zeta(-z)dz
  \end{align*}
  turns to be $0$ when $s_j=-n_j$ by $\frac{1}{\Gamma(-n_j)}=0$.  
\end{proof}

The case $j=1$ is treated below.

\begin{prop}\label{prop:the extended recurrence relation of MZF for j=1}
  Let $r\in\N_{\ge 2}$ and 
  $n_1\in\N_0$.
  We have
  \begin{equation}
   \label{eqn:the extended recurrence relation of MZF for j=1}
    \begin{split}
      &\zeta_r(-n_1,s_2,s_3,\dots,s_r)
        + \zeta_{r-1}(-n_1+s_2,s_3,\dots,s_r) \\
      &= \frac{1}{n_1+1}
          \zeta_{r-1}(-n_1+s_2-1,s_{3},\dots,s_r) \\
      &\hspace{2mm}
       - \sum_{k=0}^{n_1}
          \binom{n_1}{k}       
           \zeta_{r-1}(-n_1+s_2+k,s_{3},\dots,s_r)\zeta(-k) \\
      &\hspace{2mm}
       + \zeta(-n_1)\zeta_{r-1}(s_2,\dots,s_r).
    \end{split}
  \end{equation}
\end{prop}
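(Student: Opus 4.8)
The plan is to run the argument used for the case $2\le j\le r-1$ (Proposition~\ref{prop:the extended recurrence relation of MZF with integral terms} and Theorem~\ref{thm:the extended recurrence relation of MZF}), but starting instead from the harmonic product of the Riemann zeta function $\zeta_1(s_1)$ with the $(r-1)$-fold function $\zeta_{r-1}(s_2,\dots,s_r)$. By Proposition~\ref{prop:the harmonic relation among MZFs} with $\tbo=1$ (the empty word), $u=s_1$, $n=r-1$ and $v_i=s_{i+1}$ --- equivalently, by Notation~\ref{notation:Q & Q',the product of two MZFs} applied to $Q(1,s_1;s_2,\dots,s_r)$ --- one gets
\[
  \zeta_1(s_1)\,\zeta_{r-1}(s_2,\dots,s_r)
   =\zeta_r(s_1,s_2,\dots,s_r)+\zeta_{r-1}(s_1+s_2,s_3,\dots,s_r)+Q'(1,s_1;s_2,\dots,s_r).
\]
I would then specialize $s_1=-n_1$. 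The one genuine difference from the case $j\ge 2$ occurs on the left-hand side: here the leading factor $\zeta_1(s_1)$ is the Riemann zeta function, which is holomorphic at $s_1=-n_1$, so no recurrence (neither Theorem~\ref{thm:the original recurrence relation of MZF} nor Theorem~\ref{original recurrence relation of MZF in introduction}) is applied to it, and the left-hand side simply becomes $\zeta(-n_1)\,\zeta_{r-1}(s_2,\dots,s_r)$, which is precisely the extra term in \eqref{eqn:the extended recurrence relation of MZF for j=1}. On the right, $\zeta_{r-1}(s_1+s_2,s_3,\dots,s_r)$ specializes to $\zeta_{r-1}(-n_1+s_2,s_3,\dots,s_r)$, the second summand on the left of the claimed identity.

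Consequently the assertion reduces to the single evaluation
\[
  Q'(1,s_1;s_2,\dots,s_r)|_{s_1=-n_1}
   =-\frac{1}{n_1+1}\,\zeta_{r-1}(-n_1+s_2-1,s_3,\dots,s_r)
     +\sum_{k=0}^{n_1}\binom{n_1}{k}\,\zeta_{r-1}(-n_1+s_2+k,s_3,\dots,s_r)\,\zeta(-k).
\]
I would establish this just as Lemma~\ref{lem:the calculation of the RHS of the harmonic relation of MZFs} is established, now with $\tbo=1$, $u=s_1$ and with the role of the distinguished variable $s_j$ played by $-n_1$. By definition, $Q'(1,s_1;s_2,\dots,s_r)$ is the alternating sum over $i=1,\dots,r-1$ and over the contraction patterns $(\bullet_i,\dots,\bullet_2)$ of the terms $(-1)^{i-1}\,\zeta^*(\{1*(s_{i+1}\bullet_i\cdots\bullet_2 s_2)\},s_1)\,\zeta(s_{i+2},\dots,s_r)$; every word occurring here ends in the single entry $s_1$ and has length $\ge 2$, so at $s_1=-n_1$ the recurrence of Theorem~\ref{original recurrence relation of MZF in introduction} applies to each such $\zeta^*(\cdots,-n_1)$ and deletes its last entry (the integral in Theorem~\ref{thm:the original recurrence relation of MZF} drops out, since $1/\Gamma(-n_1)=0$). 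Re-expanding the resulting products of MZFs by the harmonic product and then collapsing the alternating sum by the telescoping (``domino'') identity of Lemma~\ref{lem:dominodaoshi for MZF} --- organized by the set-theoretic decomposition of the index set of patterns exactly as in the proofs of Lemmas~\ref{lem:the calculation of the RHS of the harmonic relation of MZFs} and~\ref{lem:almost reminder term of the cal of RHS of harmonic rel among mzf}, together with the antipode relation of Corollary~\ref{cor:the special case of antipode relation} --- all intermediate terms cancel in pairs, leaving only the two terms in which $-n_1$ has been absorbed into the entry $s_2$; these are exactly the two terms displayed above. (For $r=2$ this is just Theorem~\ref{original recurrence relation of MZF in introduction} applied to $\zeta_2(s_2,-n_1)$; the case $r=3$ can be checked directly by one harmonic-product expansion, which is a convenient sanity check on signs.)

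Finally I would substitute this evaluation into the harmonic relation specialized at $s_1=-n_1$ and move $\zeta_{r-1}(-n_1+s_2,s_3,\dots,s_r)$ to the left, which yields \eqref{eqn:the extended recurrence relation of MZF for j=1} verbatim. I expect the main obstacle to be precisely this middle step --- organizing the cancellations inside $Q'(1,s_1;s_2,\dots,s_r)|_{s_1=-n_1}$ --- which requires the same combinatorial bookkeeping of contraction patterns and repeated telescoping as Lemma~\ref{lem:the calculation of the RHS of the harmonic relation of MZFs}; it is, though, somewhat lighter here, because the prefix word is empty and the left-hand side needs no recurrence at all.
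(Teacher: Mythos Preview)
Your proposal is correct and follows essentially the same route as the paper: start from the harmonic identity $\zeta(s_1)\zeta_{r-1}(s_2,\dots,s_r)=\zeta_r(s_1,\dots,s_r)+\zeta_{r-1}(s_1+s_2,\dots,s_r)+Q'(s_1;s_2,\dots,s_r)$, compute $Q'$ by applying the recurrence to each word (all of which end in $s_1$), reorganize via the set-theoretic decomposition and collapse with Lemma~\ref{lem:dominodaoshi for MZF}, and use $1/\Gamma(-n_1)=0$ to kill the integral remainder. The only cosmetic difference is that the paper keeps $s_1$ generic and carries an integral term $S_5$ until the final specialization, whereas you specialize $s_1=-n_1$ earlier and invoke Theorem~\ref{original recurrence relation of MZF in introduction} directly; also, Corollary~\ref{cor:the special case of antipode relation} is not invoked explicitly here (it sits behind the $Q$/$Q'$ machinery but is not needed again in this step).
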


\begin{proof}
  By (\ref{set-theoritical decomposition of set of bullets}), we have
  \begin{equation*}
   \begin{split}
    &Q'(s_1;s_2,\dots,s_r) \\
    &:= \sum_{i=1}^{r-1} 
        (-1)^{i-1}
         \sum_{
          \substack{
                    \bullet_k=,\,\rm{or}\, + \\
                    2\le k \le i
                   }
              }
              \zeta(s_{i+1} \bullet_i \cdots \bullet_2 s_2,s_1)   
               \zeta(s_{i+2},\dots,s_r) \\
    &= \sum_{i=1}^{r-1} 
        (-1)^{i-1}
         \zeta(s_{i+1} + \cdots + s_2,s_1)   
          \zeta(s_{i+2},\dots,s_r) \\
    &\hspace{5mm}
     + \sum_{i=2}^{r-1} 
        (-1)^{i-1}
         \sum_{l=2}^i
          \sum_{
           \substack{
                     \bullet_k=,\,\rm{or}\, + \\
                     l+1 \le k \le i
                    }
               }
               \zeta(s_{i+1} \bullet_i \cdots \bullet_{l+1} s_{l+1},s_{l}+\cdots+s_2,s_1)   
                \zeta(s_{i+2},\dots,s_r).
   \end{split}
  \end{equation*}
  By (\ref{eqn:original rec rel}), we have
  \begin{equation*}
   \begin{split}
     &Q'(s_1;s_2,\dots,s_r) \\
     &=  \frac{1}{s_1-1}
          \sum_{i=1}^{r-1} 
           (-1)^{i-1}
            \zeta(s_{i+1} + \cdots + s_2+s_1-1)   
             \zeta(s_{i+2},\dots,s_r) \\
     &\hspace{5mm}
      + \sum_{k=0}^{M-1}
         \binom{-s_1}{k}
          \zeta(-k)
           \sum_{i=1}^{r-1} 
            (-1)^{i-1}
             \zeta(s_{i+1} + \cdots + s_2+s_1+k)   
              \zeta(s_{i+2},\dots,s_r) \\
     &\hspace{5mm}
      + \frac{1}{s_1-1}
         \sum_{i=2}^{r-1} 
          (-1)^{i-1} 
           \sum_{l=2}^i
            \sum_{
             \substack{
                       \bullet_k=,\,\rm{or}\, + \\
                       l+1 \le k \le i
                      }
                 }
                 A'''_1
                  \zeta(s_{i+2},\dots,s_r) \\
     &\hspace{5mm}
      + \sum_{k=0}^{M-1}
         \binom{-s_1}{k}
          \zeta(-k)
           \sum_{i=2}^{r-1} 
            (-1)^{i-1} 
             \sum_{l=2}^i
              \sum_{
               \substack{
                         \bullet_k=,\,\rm{or}\, + \\
                         l \le k \le i
                        }
                   }
                   A'''_2
                    \zeta(s_{i+2},\dots,s_r) + S_5,
   \end{split}
  \end{equation*}
  where 
  \begin{equation*}
   \begin{split}
    A'''_1&:= \zeta(s_{i+1} \bullet_i \cdots \bullet_{l+1} s_{l+1},s_{l}+\cdots+s_2+s_1-1), \\
    A'''_2&:= \zeta(s_{i+1} \bullet_i \cdots \bullet_{l+1} s_{l+1},s_{l}+\cdots+s_2+s_1+k), \\
    S_5 &:= \sum_{i=1}^{r-1}
          (-1)^{i-1}
           R_{M-\epsilon}(s_{i+1}+\cdots+s_{2},s_1)
            \zeta(s_{i+2},\dots,s_{r}) \\
      &\hspace{2mm}
       + \sum_{i=2}^{r-j} 
          (-1)^{i-1}
           \sum_{l=2}^i
            \sum_{
             \substack{
                       \bullet_k=,\,\rm{or}\, + \\
                       l+1\le k \le i
                      }
                 }
                 R_{M-\epsilon}
                  (s_{i+1} \bullet_i \cdots \bullet_{l+1} s_{l+1},s_{l}+\cdots+s_{2},s_1)   
                  \zeta(s_{i+2},\dots,s_r).
   \end{split}
  \end{equation*}
  By the definition of $Q$ in Notation \ref{notation:Q & Q',the product of two MZFs} and 
  by putting $m=i+1-l$ in the third and fourth terms, we have
  \begin{equation*}
   \begin{split}
     &Q'(s_1;s_2,\dots,s_r) \\
     &= \frac{1}{s_1-1}
         \sum_{i=1}^{r-1} 
          (-1)^{i-1}
           Q(s_{i+1} + \cdots + s_2+s_1-1;s_{i+2},\dots,s_r) \\
     &\hspace{5mm}
      + \sum_{k=0}^{M-1}
         \binom{-s_1}{k}
          \zeta(-k)
           \sum_{i=1}^{r-1} 
            (-1)^{i-1}
             Q(s_{i+1} + \cdots + s_2+s_1+k;s_{i+2},\dots,s_r) \\
     &\hspace{5mm}
      + \frac{1}{s_1-1}
         \sum_{l=2}^{r-1} 
          (-1)^{l-1}
           \sum_{m=1}^{r-l}
            (-1)^{m-1}
             \sum_{
              \substack{
                        \bullet_k=,\,\rm{or}\, + \\
                        l+1 \le k \le m+l-1
                       }
                  }
                  A'''_1
                   \zeta(s_{m+l+1},\dots,s_r)  \\
     &\hspace{5mm}
      + \sum_{k=0}^{M-1}
         \binom{-s_1}{k}
          \zeta(-k)
           \sum_{l=2}^{r-1} 
            (-1)^{l-1}
             \sum_{m=1}^{r-l}
              (-1)^{m-1}
               \sum_{
                \substack{
                          \bullet_k=,\,\rm{or}\, + \\
                          l+1 \le k \le m+l-1
                         }
                    }
                    A'''_3
                     \zeta(s_{m+l+1},\dots,s_r) + S_5.    
   \end{split}
  \end{equation*}
  Since the summand of the above equation with respect to $l$ is described in terms of $Q'$, 
  we have
  \begin{equation*}
   \begin{split}
     &Q'(s_1;s_2,\dots,s_r) \\
     &= \frac{1}{s_1-1}
         \sum_{i=1}^{r-1} 
          (-1)^{i-1}
           Q(s_{i+1} + \cdots + s_2+s_1-1;s_{i+2},\dots,s_r) \\
     &\hspace{5mm}
      + \sum_{k=0}^{M-1}
         \binom{-s_1}{k}
          \zeta(-k)
           \sum_{i=1}^{r-1} 
            (-1)^{i-1}
             Q(s_{i+1} + \cdots + s_2+s_1+k;s_{i+2},\dots,s_r) \\
     &\hspace{5mm}
      + \frac{1}{s_1-1}
         \sum_{l=2}^{r-1} 
          (-1)^{l-1}
           Q'(s_{l}+\cdots+s_2+s_1-1;s_{l+1},\dots,s_r) \\
     &\hspace{5mm}
      + \sum_{k=0}^{M-1}
         \binom{-s_1}{k}
          \zeta(-k)
           \sum_{l=2}^{r-1} 
            (-1)^{l-1}
             Q'(s_{l}+\cdots+s_2+s_1+k;s_{l+1},\dots,s_r) + S_5. 
   \end{split}
  \end{equation*}
  By applying Lemma \ref{lem:dominodaoshi for MZF} 
  to the summation of the first and third terms and that of the second and fourth terms 
  in the right-hand side of the above equation, we have 
  \begin{equation*}
   \begin{split}
     &Q'(s_1;s_2,\dots,s_r) \\
     &= \frac{1}{s_1-1}
         \zeta_{r-1}(s_1+s_2-1,s_{3},\dots,s_r) \\
     &\hspace{2mm}    
      + \sum_{k=0}^{M-1}
         \binom{-s_1}{k}
          \zeta_{r-1}(s_1+s_2+k,s_{3},\dots,s_r)\zeta(-k)
      + S_5.
   \end{split}
  \end{equation*}
  By (\ref{modified harmonic relation among MZF}), we get
  \begin{equation*}
   \label{eqn:the extended recurrence relation of MZF for j=1 with integral term}
   \begin{split}
     \zeta(s_1)\zeta(s_2,\dots,s_r)
     &= \zeta(s_1,s_2,\dots,s_r) + \zeta(s_1+s_2,\dots,s_r) \\
     &\hspace{2mm}
      + \frac{1}{s_1-1}
         \zeta_{r-1}(s_1+s_2-1,s_{3},\dots,s_r) \\
     &\hspace{2mm} 
      + \sum_{k=0}^{M-1}
         \binom{-s_1}{k}
          \zeta_{r-1}(s_1+s_2+k,s_{3},\dots,s_r)\zeta(-k)
      + S_5.
   \end{split}
  \end{equation*}
  Therefore, setting $s_1=-n_1$ for $n_1\in\N_0$ and $M-1=n_1$ in 
  (\ref{eqn:the extended recurrence relation of MZF for j=1 with integral term}), 
  we have $S_5=0$ by the same arguments as the proof of 
  Theorem \ref{thm:the extended recurrence relation of MZF}
  and we obtain the equation 
  (\ref{eqn:the extended recurrence relation of MZF for j=1}).
\end{proof}  

  \begin{thm}\label{general statement for extended recurrence relation of MZF}
    Let $j\in\{ 1,\dots,r \}$ and $n_j\in\N_0$.
    We have
    \begin{equation}
     \label{eqn:the general extended rec. rel. of mzf}
     \begin{split}
      &\zeta_r(s_1,\dots,s_{j-1},-n_j,s_{j+1}\dots,s_r) \\
      &= - (1-\delta_{j1})
            \frac{1}{n_j+1}
             \zeta_{r-1}(s_1,\dots,s_{j-2},s_{j-1}-n_j-1,s_{j+1},\dots,s_r) \\
      &\hspace{2mm}
       + (1-\delta_{jr})
          \frac{1}{n_j+1}
           \zeta_{r-1}(s_1,\dots,s_{j-1},-n_j+s_{j+1}-1,s_{j+2},\dots,s_r) \\
      &\hspace{2mm}
       + (1-\delta_{j1})
          \sum_{k=0}^{n_j}
           \binom{n_j}{k}
            \zeta_{r-1}(s_1,\dots,s_{j-2},s_{j-1}-n_j+k,s_{j+1},\dots,s_r)\zeta(-k) \\
      &\hspace{2mm}
       - (1-\delta_{jr})
          \sum_{k=0}^{n_j}
           \binom{n_j}{k}       
            \zeta_{r-1}(s_1,\dots,s_{j-1},-n_j+s_{j+1}+k,s_{j+2},\dots,s_r)\zeta(-k) \\
      &\hspace{2mm}
       + \delta_{j1}(1-\delta_{jr})\zeta_j(s_1,\dots,s_{j-1},-n_j)\zeta_{r-j}(s_{j+1},\dots,s_r) \\
      &\hspace{2mm}
       + (\delta_{jr}-1) \zeta_{r-1}(s_1\dots,-n_j+s_{j+1},\dots,s_r),  
     \end{split}
    \end{equation}
    where $\delta_{ij}$ is Kronecker's delta.
  \end{thm}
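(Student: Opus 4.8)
The plan is to derive Theorem \ref{general statement for extended recurrence relation of MZF} as a purely formal consequence of the three cases already settled, by checking that the Kronecker-delta coefficients in (\ref{eqn:the general extended rec. rel. of mzf}) select the correct specialization for each range of $j$. Throughout I would take $r\ge 2$, so that $\delta_{j1}$ and $\delta_{jr}$ are never simultaneously $1$; this is also the range in which all the cited statements are proved.

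First I would handle the generic range $2\le j\le r-1$: here $\delta_{j1}=\delta_{jr}=0$, so the fifth term of (\ref{eqn:the general extended rec. rel. of mzf}) disappears and the sixth term equals $-\zeta_{r-1}(s_1,\dots,s_{j-1},-n_j+s_{j+1},s_{j+2},\dots,s_r)$; transposing this term to the left-hand side reproduces (\ref{rec2}), i.e. Theorem \ref{thm:the extended recurrence relation of MZF}. Next, for $j=1$ I would use $\delta_{j1}=1$, $\delta_{jr}=0$: the first and third terms vanish (and with them the ill-defined symbols $s_{j-2}$, $s_{j-1}$), the fifth term collapses to $\zeta_1(-n_1)\zeta_{r-1}(s_2,\dots,s_r)=\zeta(-n_1)\zeta_{r-1}(s_2,\dots,s_r)$, and transposing the sixth term $-\zeta_{r-1}(-n_1+s_2,s_3,\dots,s_r)$ to the left reproduces (\ref{eqn:the extended recurrence relation of MZF for j=1}), i.e. Proposition \ref{prop:the extended recurrence relation of MZF for j=1}. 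Finally, for $j=r$ I would use $\delta_{jr}=1$, $\delta_{j1}=0$: the second, fourth, fifth, and sixth terms vanish, and the remaining identity is precisely Theorem \ref{original recurrence relation of MZF in introduction} (equivalently, the specialization of Theorem \ref{thm:the original recurrence relation of MZF} at $s_r=-n_r$, $M=n_r+1$, where the integral term dies since $1/\Gamma(-n_r)=0$). Since $\{1\}$, $\{2,\dots,r-1\}$, $\{r\}$ partition $\{1,\dots,r\}$, this would finish the argument.

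I do not expect any genuine obstacle here: the mathematical content is already contained in Theorem \ref{thm:the extended recurrence relation of MZF} and Proposition \ref{prop:the extended recurrence relation of MZF for j=1}, and this last theorem is only a uniform repackaging. The one point demanding a little care is the bookkeeping of the delta factors — one must verify that $(1-\delta_{j1})$ and $(1-\delta_{jr})$ suppress exactly the terms involving the empty initial segment $(s_1,\dots,s_{j-2})$ at $j=1$ and the empty final segment $(s_{j+2},\dots,s_r)$ at $j=r$, respectively, and that the coefficient $\delta_{j1}(1-\delta_{jr})$ isolates precisely the single situation ($j=1<r$) in which the extra product $\zeta_j(\cdots)\zeta_{r-j}(\cdots)$ occurs.
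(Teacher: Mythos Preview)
Your proposal is correct and follows exactly the paper's approach: the proof in the paper simply states that the claim is obtained by combining Theorem \ref{original recurrence relation of MZF in introduction}, Theorem \ref{thm:the extended recurrence relation of MZF}, and Proposition \ref{prop:the extended recurrence relation of MZF for j=1}. Your case-by-case verification of the Kronecker-delta bookkeeping makes explicit what the paper leaves implicit, but the underlying argument is identical.
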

%
\begin{proof}
By combining Theorem \ref{original recurrence relation of MZF in introduction},
Theorem \ref{thm:the extended recurrence relation of MZF}, and
Proposition \ref{prop:the extended recurrence relation of MZF for j=1},
we obtain the claim.
\end{proof}


\subsection{Special values of multiple zeta functions at regular integer points}
In this subsection, we explain that special values of MZF 
at any regular integer points are calculated explicitly by 
(\ref{eqn:the general extended rec. rel. of mzf}).
And we present explicit formulas of those special values up to depth 3.

To begin with, we mention that 
Theorem \ref{general statement for extended recurrence relation of MZF} 
gives the recurrence relation among special values of MZF at regular integer points.
  \begin{cor}\label{cor:a special value of mzf at regular integer point}
    Let $r\in\N$ and 
    an index $(n_1,\dots,-n_j,\dots,n_r)\in\Z^r$ ($j\in\{ 1,\dots,r \}$, $n_j\in\N_0$) 
    be not on the set of singularities of MZF given by 
    (\ref{eqn:pole condition of mzf in introduction}).
    We have
    \begin{equation}
     \label{eqn:a special value of mzf at regular integer point}
     \begin{split}
      &\zeta_r(n_1,\dots,n_{j-1},-n_j,n_{j+1}\dots,n_r) \\
      &= - (1-\delta_{j1})
            \frac{1}{n_j+1}
             \zeta_{r-1}(n_1,\dots,s_{j-2},n_{j-1}-n_j-1,n_{j+1},\dots,n_r) \\
      &\hspace{2mm}
       + (1-\delta_{jr})
          \frac{1}{n_j+1}
           \zeta_{r-1}(n_1,\dots,n_{j-1},-n_j+n_{j+1}-1,n_{j+2},\dots,n_r) \\
      &\hspace{2mm}
       + (1-\delta_{j1})
          \sum_{k=0}^{n_j}
           \binom{n_j}{k}
            \zeta_{r-1}(n_1,\dots,n_{j-2},n_{j-1}-n_j+k,n_{j+1},\dots,n_r)\zeta(-k) \\
      &\hspace{2mm}
       - (1-\delta_{jr})
          \sum_{k=0}^{n_j}
           \binom{n_j}{k}       
            \zeta_{r-1}(n_1,\dots,n_{j-1},-n_j+n_{j+1}+k,n_{j+2},\dots,n_r)\zeta(-k) \\
      &\hspace{2mm}
       + \delta_{j1}(1-\delta_{jr})\zeta_j(n_1,\dots,n_{j-1},-n_j)\zeta_{r-j}(n_{j+1},\dots,n_r) \\
      &\hspace{2mm}
       + (\delta_{jr}-1) \zeta_{r-1}(n_1\dots,-n_j+n_{j+1},\dots,n_r),  
     \end{split}
    \end{equation}
    where $\delta_{ij}$ is Kronecker's delta.
  \end{cor}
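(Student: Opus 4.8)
The plan is to deduce the corollary from Theorem~\ref{general statement for extended recurrence relation of MZF} by specializing the complex variables $s_i$ to the integers $n_i$ for all $i\ne j$ (the $j$-th slot is already fixed at $-n_j$). Since (\ref{eqn:the general extended rec. rel. of mzf}) is an identity between meromorphic functions on $\C^{r-1}$, it suffices to show that every term occurring there is holomorphic at the point $P:=(n_1,\dots,n_{j-1},n_{j+1},\dots,n_r)$, and then to evaluate both sides at $P$.

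First I would discard the harmless terms. In the two sums over $k$ in (\ref{eqn:the general extended rec. rel. of mzf}) each summand with $k$ a positive even integer is identically zero, because $\zeta(-k)=0$; such summands contribute nothing and, in particular, create no pole. The factor $\zeta(-n_j)=\zeta_1(-n_j)$ is an ordinary Riemann value, hence finite, while the factor $\zeta_{r-j}(s_{j+1},\dots,s_r)$ present only when $j=1$ specializes to $\zeta_{r-1}(n_2,\dots,n_r)$, whose tail sums are precisely tail sums of $(n_1,\dots,-n_1,\dots,n_r)$, so it is holomorphic at $P$ by hypothesis.

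The heart of the matter is to check holomorphy at $P$ of the remaining multiple zeta functions: the $\zeta_r$ on the left (which is exactly the hypothesis that $(n_1,\dots,-n_j,\dots,n_r)$ lies off the singular locus (\ref{eqn:pole condition of mzf in introduction})), the $\zeta_{r-1}$'s with coefficient $\frac{1}{n_j+1}$, the $\zeta_{r-1}$'s paired with $\zeta(-k)$ for $k=0$ or $k$ odd, and the term $\zeta_{r-1}(s_1,\dots,-n_j+s_{j+1},\dots,s_r)$. After specialization each such $\zeta_{r-1}$ has an argument obtained from $(n_1,\dots,-n_j,\dots,n_r)$ by merging the $j$-th slot with an adjacent one, possibly after a shift of the merged entry by $-1$, $0$, or $+k$. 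Hence every tail sum of the $(r-1)$-tuple is either a tail sum of the original $r$-tuple, or a tail sum of the $r$-tuple with one extra entry, shifted by $-1$, $0$, or $+k$; and one checks against (\ref{eqn:pole condition of mzf in introduction}) that an $(r-1)$-fold singularity condition at depth $m$ would force one of the violated $r$-fold conditions at depth $m$ or $m+1$. The checks rely on the inclusions $\Z_{\le m}\subseteq\Z_{\le m+1}$ and $\{2,1,0,-2,-4,\dots\}\subseteq\Z_{\le 2}$, on the strict gap produced by regularity (a violated condition at depth $m+1\ge 3$ pushes the tail sum strictly past the threshold, which absorbs the $-1$-shifts, while at depth $2$ regularity forbids the borderline value $2$), and on $1-k\in\{0,-2,-4,\dots\}$ for odd $k$ (with $\zeta(-k)=0$ disposing of even $k$). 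I expect this bookkeeping — carried out uniformly in $j\in\{1,\dots,r\}$ and over the several flavours of term, with the endpoints $j=1$, $j=r$ isolated by the Kronecker deltas — to be the only genuine obstacle.

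Granting holomorphy of every term at $P$, one evaluates both sides of (\ref{eqn:the general extended rec. rel. of mzf}) at $P$ and reads off (\ref{eqn:a special value of mzf at regular integer point}); the degenerate case $r=1$ is immediate since $\zeta_1(-n_1)=\zeta(-n_1)$.
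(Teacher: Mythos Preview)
Your approach is correct and coincides with the paper's own proof, which simply records that (\ref{eqn:a special value of mzf at regular integer point}) is a direct consequence of (\ref{eqn:the general extended rec. rel. of mzf}) once one observes that every index appearing on the right-hand side is again a regular point of the relevant $\zeta_{r-1}$. The paper asserts this regularity without further justification; your tail-sum bookkeeping (using the inclusions among the singular loci, the vanishing of $\zeta(-k)$ for even $k>0$, and the parity of $1-k$ for odd $k$) is precisely the verification the paper leaves to the reader.
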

%
\begin{proof}
  Though this is a direct consequence from (\ref{eqn:the general extended rec. rel. of mzf}), 
  it is worthy to note that indices $(n_1,\dots,n_{j-1}-n_j+k,\dots,n_r)$ and 
  $(n_1,\dots,n_{j+1}-n_j+k,\dots,n_r)$ ($k\in\{-1,0,\dots,n_j \}$) 
  are the regular point of MZF in each term on the right-hand side of 
  (\ref{eqn:a special value of mzf at regular integer point}).
\end{proof}


To state our main theorem precisely, we prepare some notations.
For an index $\textbf{k}:=(k_1,\dots,k_r)\in\Z^r$ ($r\in\N$) with
\begin{align*}
  k_{i_1},\dots,k_{i_l}>0 \quad (1\le i_1<\cdots<i_l\le r,\ l\in\N), \\
  k_j\le 0                \quad (j\in\{1,\dots,r\}\setminus\{ i_1,\dots,i_l\}), 
\end{align*}
we set $\textbf{k}^{+}:=(k_{i_1},\dots,k_{i_l})$, ${\rm dp }\,\textbf{k}^{+}:=l$ and 
${\rm wt}\,\textbf{k}^{+}:=k_{i_1}+\cdots+k_{i_l}$.
We also define the following $\Q$ linear subspace of $\R$ generated by MZVs.
For $a\in\N_0$ and $b\in\N_{\ge2}$, we define
\[
  Z^{\le a}_{\le b} := \langle\, 
                        \zeta(\textbf{m}) \,|\, 
                         \textbf{m}\in\N^{n},\ n\in\{0,1,\dots,a\},\ 
                         {\rm wt}\,\textbf{m}\le b\,
                       \rangle_{\Q},
\]
where $\textbf{m}=(m_1,\dots,m_n)\in\N^{n}$ is an admissible index, i.e. $m_n>1$ 
and we set $\zeta(\emptyset):=1$.

Our main theorem is stated as follows.

\begin{thm}\label{thm:main theorem}
  The equation (\ref{eqn:a special value of mzf at regular integer point}) allows us to express 
  the special value of the MZF at a regular integer point as a 
  rational linear combination of MZVs.
  In more detail, when an index $\textbf{n}=(n_1,\dots,n_r)\in\Z^r$ with
  $d={\rm dp}\,\textbf{n}^+$ and $w={\rm wt}\,\textbf{n}^+$ is not on the set 
  of  singularities given by (\ref{eqn:pole condition of mzf in introduction}), 
  we have
  \[
    \zeta_r(n_1,\dots,n_r)\in Z^{\le d}_{\le w}.
  \]
\end{thm}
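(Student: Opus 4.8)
The plan is to prove the statement by induction on $r$, and for fixed $r$ by induction on $w = {\rm wt}\,\textbf{n}^+$ (together with an auxiliary induction on the number of non-positive entries), using Corollary~\ref{cor:a special value of mzf at regular integer point} as the engine. The base case is handled by the two classical situations: when $\textbf{n}$ is all-positive, $\zeta_r(\textbf{n})$ is an admissible MZV of weight $w$ and depth $d=r$ — unless the last entry is $1$, in which case the point is singular and excluded — so $\zeta_r(\textbf{n})\in Z^{\le d}_{\le w}$ trivially; when $r=1$ and $n_1\le 0$ is a regular point, there are none since $\zeta(s_1)$ is regular except... actually $\zeta(-n_1)\in\Q = Z^{\le 0}_{\le w}$, which sits in every $Z^{\le d}_{\le w}$. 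So the genuine content is the inductive step for indices with at least one non-positive entry among two or more entries.

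First I would fix a regular integer point $\textbf{n}=(n_1,\dots,n_r)$ with at least one index $j$ where $n_j \le 0$; write $-n_j$ for that entry with $n_j\in\N_0$. Choosing such a $j$ (say, the leftmost non-positive entry, or whichever is most convenient), I apply \eqref{eqn:a special value of mzf at regular integer point}. This expresses $\zeta_r(\textbf{n})$ as a $\Q$-linear combination of: (a) values $\zeta_{r-1}$ at indices obtained by merging the $j$-th slot into a neighbour and shifting by an integer $k\in\{-1,0,\dots,n_j\}$, each multiplied by $\zeta(-k)\in\Q$ (or by $1/(n_j+1)\in\Q$); and (b) in the $j=1$ case, a product $\zeta_1(-n_1)\,\zeta_{r-1}(n_2,\dots,n_r) = \zeta(-n_1)\,\zeta_{r-1}(n_2,\dots,n_r)$ with $\zeta(-n_1)\in\Q$; plus the single term $\zeta_{r-1}(\dots,-n_j+n_{j+1},\dots)$ coming from the $(\delta_{jr}-1)$ coefficient. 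Every term on the right is (a rational multiple of) a value of $\zeta_{r-1}$ — or of $\zeta_{r-j}$ and $\zeta_j$ in the split term — at some integer point $\textbf{n}'$. I would then check two things for each such $\textbf{n}'$: that it is again a \emph{regular} integer point of the corresponding MZF (this is exactly the observation recorded in the proof of Corollary~\ref{cor:a special value of mzf at regular integer point}, that the shifted/merged indices avoid the singular locus \eqref{eqn:pole condition of mzf in introduction}), and that the relevant weight/depth invariants do not increase: ${\rm dp}\,(\textbf{n}')^+ \le d$ and ${\rm wt}\,(\textbf{n}')^+ \le w$.

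The weight-depth bookkeeping is the crux, so I would do it carefully. When we merge slot $j$ (value $-n_j\le 0$) into a neighbouring slot $i$, the new entry is $n_i - n_j + k$ for some $k\in\{-1,\dots,n_j\}$, hence lies in $[n_i-n_j-1,\,n_i]$; in particular it never exceeds the old positive contribution $n_i$ (when $n_i>0$), and if $n_i\le 0$ it may become positive but is then bounded by $n_i + n_j \le n_j \le 0$... wait — more precisely $n_i - n_j + k \le n_i \le 0$ when $k\le n_j$ and $n_i\le0$, so it stays non-positive; and when $n_i>0$ the merged value is $\le n_i$, so the positive part's weight does not grow and its depth drops by one (two positive slots... no: we lose the slot $j$ which was non-positive, and slot $i$'s positivity/value can only decrease). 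Either way ${\rm wt}(\textbf{n}')^+ \le w$ and ${\rm dp}(\textbf{n}')^+ \le d$, with the depth genuinely dropping when we are in the $j=1$ split term (there $\zeta_j(n_1,\dots,-n_j)$ with all arguments... for $j=1$ it is just $\zeta(-n_1)\in\Q$). Actually the one subtlety is that ${\rm wt}(\textbf{n}')^+$ could equal $w$ while ${\rm dp}$ stays equal, so the pair $(r,w)$ alone need not strictly decrease in $w$; this is why I would induct on $r$ as the primary parameter (it strictly drops to $r-1$ in every non-split term, and to $r-j<r$ and $j-1<r$... no, $j$ could be large) — cleaner: induct on $r$, since every term on the right involves only MZFs of strictly smaller depth ($\zeta_{r-1}$, or $\zeta_{r-j}$ and $\zeta_j$ with $j\ge1$ so $r-j\le r-1$ and $j\le r-1$ in the split case as $j\ne r$ there). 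Combined with the just-verified bounds $\textbf{n}'$ regular, ${\rm dp}(\textbf{n}')^+\le d$, ${\rm wt}(\textbf{n}')^+\le w$, the inductive hypothesis gives $\zeta(\textbf{n}')\in Z^{\le d}_{\le w}$, and $Z^{\le d}_{\le w}$ is a $\Q$-vector space closed under multiplication by $\Q$ and (for the split term) we need it closed under \emph{products} of MZVs — here $\zeta_{r-j}(n_{j+1},\dots,n_r)\in Z^{\le d}_{\le w}$ and it is multiplied by the rational $\zeta(-n_1)$, so no product of two transcendental MZVs actually occurs. Summing finitely many such terms stays in $Z^{\le d}_{\le w}$, completing the induction.

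The main obstacle I anticipate is not conceptual but the precise verification that each index $\textbf{n}'$ produced on the right-hand side of \eqref{eqn:a special value of mzf at regular integer point} is again regular — i.e. avoids \eqref{eqn:pole condition of mzf in introduction} — since merging and shifting slots can in principle create a new "tail sum in $\Z_{\le k}$" condition; one must use that $\textbf{n}$ itself was regular and that the shift parameter $k$ ranges over a controlled interval. A secondary point of care is the edge cases $j=1$ and $j=r$ (the Kronecker-delta terms), and the degenerate situations where merging produces an index whose last entry is $1$ (which would be non-admissible/singular) — I would argue these cannot arise from a regular $\textbf{n}$, or are absorbed because the offending term's coefficient vanishes. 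Once regularity is secured, the weight-depth monotonicity is routine interval arithmetic as sketched above, and the rest is the formal induction.
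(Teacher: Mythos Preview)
Your approach is essentially the paper's: induction on $r$, applying the recurrence \eqref{eqn:a special value of mzf at regular integer point} at a non-positive slot to reduce to depth $r-1$, then invoking regularity of the resulting indices (asserted in the proof of Corollary~\ref{cor:a special value of mzf at regular integer point}) and the obvious weight/depth monotonicity. Your explicit bookkeeping on ${\rm wt}(\textbf{n}')^+$ and ${\rm dp}(\textbf{n}')^+$ and your observation that singular indices on the right can only appear with coefficient $\zeta(-k)=0$ (even $k\ge 2$) are useful fleshings-out of what the paper compresses into ``one can easily check''; otherwise the arguments coincide.
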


\begin{proof}
  We show the claim by induction on $r$. Let $r=2$. 
  If the variables $s_1$ and $s_2$ are both positive integers, 
  $\zeta_2(s_1,s_2)$ is a double zeta value. 
  Hence we may assume that at least one of $s_1$ and $s_2$ is non-positive. 
  By \cite[$\S$4, Lemma 4.1]{FKMT1}, we have 
  \begin{equation*}
   \begin{split}
     \zeta_2(s_1,-n_2)
        &= -\frac{1}{n_2+1}
                 \zeta(s_1-n_2-1) 
           + \sum_{k=0}^{n_2}
              \binom{n_2}{k}
               \zeta(s_1-n_2+k)\zeta(-k),  \\
     \zeta_2(-n_1,s_2)
        &= \frac{1}{n_1+1}
            \zeta(s_2-n_1-1) 
           - \sum_{k=0}^{n_1}
              \binom{n_1}{k}       
               \zeta(s_2-n_1+k)\zeta(-k) \\
        &\hspace{2.7mm}
         + \zeta(-n_1)\zeta(s_2)
           - \zeta(s_2-n_1).       
   \end{split}
  \end{equation*}
  Since for $k\in\N_0$, we have $\zeta(-k) = -\frac{B_{k+1}}{k+1}$ 
  (see Example \ref{example for r=1}).
  Hence, we obtain the claim for $r=2$.
  
  \indent
  Let $r>2$. 
  If the all variables $s_1$,$\dots$,$s_r$ are positive integers, 
  $\zeta_r(s_1,\dots,s_r)$ is nothing but a multiple zeta value. 
  Hence we may consider the special value for an index 
  $(n_1,\dots,-n_j,\dots,n_r)\in\Z^r$ ($j\in\{ 1,\dots,r \}$, $n_j\in\N_0$) 
  which are not on the set of singularities of MZF.
  By using the equation (\ref{eqn:a special value of mzf at regular integer point}) 
  and by the induction assumption,
  one can easily check that 
  $\zeta_r(n_1,\dots,-n_j,\dots,n_r)$ is in $Z^{\le d}_{\le w}$ 
  with $d={\rm dp}\,\textbf{n}^+$ and $w={\rm wt}\,\textbf{n}^+$.
  Thus we obtain the claim.
\end{proof}

In precise, the equation (\ref{eqn:a special value of mzf at regular integer point}) allows us to calculate special values of
$\zeta_r(s_1,\dots,s_r)$ at regular integer points explicitly.
The following is an example in the case of $r=1$.

\begin{exam}\label{example for r=1}
  {\rm
    It is well known that $\zeta(n)$ is the Riemann zeta value when $n\in\Z_{\ge1}$ 
    and for $n\in\Z_{\le0}$, we have
    \[
      \zeta(-n) = -\frac{B_{n+1}}{n+1},
    \]
    where $B_n$ is the Bernoulli number which is a rational number defined by
    \[
      \frac{t}{e^t-1} = \sum_{n=0}^{\infty}\frac{B_n}{n!}t^n.
    \]
  }
\end{exam}

The following is an example in the case of $r=2$.

\begin{exam}\label{example:r=2}
  {\rm 
  (cf. \cite[Lemma 4.1]{FKMT2}). 
  We assume that $(n_1,n_2)\in\Z^2$ is a regular integer point of $\zeta_2(s_1,s_2)$,
  i.e. 
  \[
    n_2\ne1\quad \text{and}\quad n_1+n_2\in\Z_{>2}\cup\{ -k \,|\, k\in\Z_{>0},\,k:\text{odd} \}.
  \]
  Then the special value $\zeta_2(s_1,s_2)$ is calculated as follows;
  \begin{enumerate}
    \item[(i)] 
    When $(n_1,n_2)\in\Z_{\ge1}\times\Z_{\ge2}$, $\zeta_2(n_1,n_2)$ is a double zeta value. 
    \item[(ii)] 
    When $(n_1,n_2)\in\Z_{\ge1}\times\Z_{\le0}$, we have
    \[
      \zeta_2(n_1,n_2) 
      = -\frac{1}{-n_2+1}
          \zeta(n_{1}+n_2-1)
        - \sum_{k=0}^{-n_2}
           \binom{-n_2}{k}
            \frac{B_{k+1}}{k+1}
             \zeta(n_1+n_2+k).
    \]
    \item[(iii)] 
    When $(n_1,n_2)\in\Z_{\le0}\times\Z_{\ge2}$, we have
    \begin{equation*}
     \begin{split}
       \zeta_2(n_1,n_2) 
       &= \frac{1}{-n_1+1}
           \zeta(n_{1}+n_2-1)
          + \sum_{k=0}^{-n_1}
             \binom{-n_1}{k}
              \frac{B_{k+1}}{k+1}
               \zeta(n_1+n_2+k) \\
        &\hspace{3mm}   
         + \zeta(n_1)\zeta(n_2)
          - \zeta(n_1+n_2).
     \end{split}
    \end{equation*}
    \item[(iv)] 
    When $(n_1,n_2)\in\Z_{\le0}\times\Z_{\le0}$ with $n_1n_2\ne 0$, we have
    \[
      \zeta_2(n_1,n_2) = -\frac{1}{2}\zeta(n_1+n_2).
    \]
    \item[(v)] 
    When $(n_1,n_2)\in\Z_{\le0}\times\Z_{\le0}$ with $n_1n_2= 0$, we have
    \[
      \zeta_2(n_1,n_2) = \zeta(n_1+n_2).
    \]
  \end{enumerate}
  } 
\end{exam}

The following is an example in the case of $r=3$.

\begin{exam}
 {\rm 
  We assume that $(n_1,n_2,n_3)\in\Z^3$ is a regular integer point of $\zeta_3(s_1,s_2,s_3)$,
  i.e. 
  \[
    n_3\ne1 \quad \text{and}\quad 
    n_2+n_3\in\Z_{>2}\cup\{ -k \,|\, k\in\Z_{>0},\,k:\text{odd} \} \quad \text{and}\quad 
    n_1+n_2+n_3\in\Z_{>3}.
  \]
  Then the special value $\zeta_3(s_1,s_2,s_3)$ is calculated as follows;
  \begin{enumerate}
    \item[(i)] 
    When $(n_1,n_2,n_3)\in\Z_{\ge1}\times\Z_{\ge1}\times\Z_{\ge2}$, 
    $\zeta_3(n_1,n_2,n_3)$ is a triple zeta value.
    \item[(ii)] 
    When $(n_1,n_2,n_3)\in\Z_{\ge1}\times\Z_{\ge1}\times\Z_{\le0}$, we have
    \begin{equation*}
     \begin{split}
       \zeta_3(n_1,n_2,n_3)
       &= - \frac{1}{-n_3+1}
             \zeta_2(n_1,n_2+n_3-1) \\
       &\hspace{2mm}
        + \sum_{k=0}^{-n_3}
           \binom{-n_3}{k}
            \frac{B_{k+1}}{k+1}
             \zeta_2(n_1,n_2+n_3+k).
     \end{split}
    \end{equation*} 
    \item[(iii)] 
    When $(n_1,n_2,n_3)\in\Z_{\ge1}\times\Z_{\le0}\times\Z_{\ge2}$, we have
    \begin{equation*}
     \begin{split}
       &\zeta_3(n_1,n_2,n_3)\\
       &= - \frac{1}{-n_2+1}
             \{
               \zeta_2(n_1+n_2-1,n_3) 
               - \zeta_2(n_1,n_2+n_3-1) 
             \} \\
       &\hspace{2mm}
        + \sum_{k=0}^{-n_2}
           \binom{-n_2}{k}
            \frac{B_{k+1}}{k+1}
             \{
               \zeta_2(n_1+n_2+k,n_3) - \zeta_2(n_1,n_2+n_3+k)
             \} \\
       &\hspace{2mm}
        - \zeta_2(n_1,n_2+n_3).
     \end{split}
    \end{equation*} 
    \item[(iv)] 
    When $(n_1,n_2,n_3)\in\Z_{\le0}\times\Z_{\ge1}\times\Z_{\ge2}$, we have
    \begin{equation*}
     \begin{split}
       \zeta_3(n_1,n_2,n_3)
       &=  \frac{1}{-n_1+1}
            \zeta_2(n_1+n_2-1,n_3) \\
       &\hspace{2mm}
        - \sum_{k=0}^{-n_1}
           \binom{-n_1}{k}
            \frac{B_{k+1}}{k+1}
             \zeta_2(n_1+n_2+k,n_3) \\
       &\hspace{2mm}
        - \frac{B_{-n_1+1}}{-n_1+1}\zeta_2(n_2,n_3) - \zeta_2(n_1+n_2,n_3).
     \end{split}
    \end{equation*} 
    \item[(v)] 
    When $(n_1,n_2,n_3)\in\Z_{\ge1}\times\Z_{\le0}\times\Z_{\le0}$, we have
    \begin{equation*}
     \begin{split}
       \zeta_3(n_1,n_2,n_3) 
       &= \frac{1}{-n_3+1}
           \frac{1}{-n_2-n_3+2}
            \zeta(n_1+n_2+n_3-2) \\
       &\hspace{2mm}
       + \frac{1}{-n_3+1} 
          \sum_{l=0}^{-n_2-n_3+1}
           \binom{-n_2-n_3+1}{l}
            \frac{B_{l+1}}{l+1}
             \zeta(n_1+n_2+n_3-1+l) \\
       &\hspace{2mm}
        + \sum_{k=0}^{-n_3}
           \binom{-n_3}{k}
            \frac{1}{-n_2-n_3-k+1}
             \frac{B_{k+1}}{k+1}
              \zeta(n_1+n_2+n_3-1+k) \\
       &\hspace{2mm}
        - \sum_{k=0}^{-n_3}
           \binom{-n_3}{k}          
            \sum_{l=0}^{-n_2-n_3-k}
             \binom{-n_2-n_3-k}{l}
              \frac{B_{k+1}}{k+1}
               \frac{B_{l+1}}{l+1}
                \zeta(n_1+n_2+n_3+k+l).
     \end{split}
    \end{equation*} 
    \item[(vi)] 
    When $(n_1,n_2,n_3)\in\Z_{\le0}\times\Z_{\ge1}\times\Z_{\le0}$, we have
    \begin{equation*}
     \begin{split}
       &\zeta_3(n_1,n_2,n_3) \\
       &= -\frac{1}{-n_3+1}
            \frac{1}{-n_1+1}
             \zeta(n_1+n_2+n_3-2) 
          + \frac{1}{-n_3+1} 
             \zeta(n_1+n_2+n_3-1) \\
       &\hspace{2mm}
        - \frac{1}{-n_3+1}
           \sum_{l=0}^{-n_1}
            \binom{-n_1}{l}
             \frac{B_{l+1}}{l+1}
              \zeta(n_1+n_2+n_3-1+l) \\
       &\hspace{2mm}
        + \sum_{k=0}^{-n_3}
           \binom{-n_3}{k}
            \frac{B_{k+1}}{k+1}
             \left\{
              -\frac{1}{-n_1+1}
                 \zeta(n_1+n_2+n_3+k-1)
                + \zeta(n_1+n_2+n_3+k) \right\} \\
       &\hspace{2mm}
        - \sum_{k=0}^{-n_3}
           \binom{n_3}{k}          
            \sum_{l=0}^{-n_1}
             \binom{-n_1}{l}
              \frac{B_{l+1}}{l+1}
               \frac{B_{k+1}}{k+1}
                \zeta(n_1+n_2+n_3+k+l).
     \end{split}
    \end{equation*} 
    \item[(vii)] 
    When $(n_1,n_2,n_3)\in\Z_{\le0}\times\Z_{\le0}\times\Z_{\ge2}$, we have
    \begin{equation*}
     \begin{split}
       &\zeta_3(n_1,n_2,n_3) \\
       &= \frac{1}{-n_1+1}
           \frac{1}{-n_1-n_2+2}
            \zeta(n_1+n_2+n_3-2) \\
       &\hspace{2mm}
          - \frac{B_{-n_1+1}}{-n_1+1}
             \frac{1}{-n_2+1} 
              \zeta(n_2+n_3-1)\\
       &\hspace{2mm}
        - \left( \frac{1}{-n_1-n_2+1} + \frac{1}{-n_1+1} \right) 
           \zeta(n_1+n_2+n_3-1)
        + \frac{B_{-n_1+1}}{-n_1+1}
           \zeta(n_2+n_3)  \\
       &\hspace{2mm}
        + \left(
            \frac{B_{-n_1-n_2+1}}{-n_1-n_2+1} 
            + \frac{B_{-n_1+1}}{-n_1+1}\frac{B_{-n_2+1}}{-n_2+1}
            - \frac{1}{-n_1+1}\frac{B_{-n_1-n_2+2}}{-n_1-n_2+2}
          \right)
           \zeta(n_3) \\
       &\hspace{2mm}     
        + \zeta(n_1+n_2+n_3) \\
       &\hspace{2mm}
        + \frac{1}{-n_1+1}
           \sum_{l=0}^{-n_1-n_2+1}
            \binom{-n_1-n_2+1}{l}
             \frac{B_{l+1}}{l+1}
              \zeta(n_1+n_2+n_3-1+l) \\
       &\hspace{2mm}
        + \sum_{k=0}^{-n_1}
           \binom{-n_1}{k}
            \frac{B_{k+1}}{k+1}
             \frac{1}{-n_1-n_2-k+1}
              \zeta(n_1+n_2+n_3-1+k) \\
       &\hspace{2mm}
        - \sum_{k=0}^{-n_1}
           \binom{-n_1}{k}          
            \sum_{l=0}^{-n_1-n_2-k}
             \binom{-n_1-n_2-k}{l}
              \frac{B_{l+1}}{l+1}
               \frac{B_{k+1}}{k+1}
                \zeta(n_1+n_2+n_3+k+l) \\
       &\hspace{2mm}
        - \sum_{k=0}^{-n_1}
           \binom{-n_1}{k}
            \frac{B_{k+1}}{k+1}
             \left( 
               \frac{B_{-n_1-n_2-k+1}}{-n_1-n_2-k+1} 
                \zeta(n_3) 
               + \zeta(n_1+n_2+n_3+k) 
             \right) \\
       &\hspace{2mm}
        - \frac{B_{-n_1+1}}{-n_1+1}
           \sum_{l=0}^{-n_2}
            \binom{-n_2}{l}
             \frac{B_{l+1}}{l+1}
              \zeta(n_2+n_3+l) \\
       &\hspace{2mm}       
        - \sum_{k=0}^{-n_1-n_2}
           \binom{-n_1-n_2}{l}
             \frac{B_{l+1}}{l+1}
              \zeta(n_1+n_2+n_3+l).
     \end{split}
    \end{equation*} 
    \item[(viii)] 
    When $(n_1,n_2,n_3)\in\Z_{\le0}\times\Z_{\le0}\times\Z_{\le0}$, 
    the index $(n_1,n_2,n_3)$ is on the set of singularities of $\zeta_3(s_1,s_2,s_3)$. 
    The special value there is indeterminate.
  \end{enumerate}  
  
 }
\end{exam}

\noindent{\it Acknowledgments.} 
The author sincerely expresses his gratitude to H. Furusho for leading him to this area and giving him many valuable comments.
He also profoundly appreciates N. Komiyama for his helpful advice.

\end{document}